\newcommand{\pres}[3]{\textnormal{#1} \langle #2 \mid #3 \rangle}
\newcommand{\Z}{\mathbb{Z}}
\newcommand{\leqfg}{\leq_{\mathrm{f.g.}}}
\newcommand{\leqfi}{\leq_{\mathrm{f.i.}}}
\newcommand{\cc}{\mathbf{C}}
\newcommand{\CF}{\mathbf{CF}}
\newcommand{\pCF}{\mathbf{pCF}}
\newcommand{\CS}{\mathbf{CS}}
\newcommand{\REG}{\mathbf{Reg}}
\newcommand{\MCF}{\mathbf{MCF}}
\newcommand{\REC}{\mathbf{REC}}
\newcommand{\RE}{\mathbf{RE}}
\newcommand{\Fin}{\text{Fin}}
\newcommand{\OC}{\mathbf{1C}}
\newcommand{\ETOL}{\mathbf{ET0L}}
\newcommand{\cR}{\mathcal{R}}
\DeclareMathOperator{\WP}{WP}
\DeclareMathOperator{\CP}{CP}
\DeclareMathOperator{\Aut}{Aut}
\newcommand{\AFL}{\mathrm{AFL}}
\newtheorem{theorem}{Theorem} % 1st argument is your name for it
\numberwithin{theorem}{section}
\newtheorem*{theorem*}{Theorem} % 1st argument is your name for it
\newtheorem{lemma}[theorem]{Lemma}     % 2nd argument is what is printed
\newtheorem{corollary}[theorem]{Corollary}
\newtheorem{proposition}[theorem]{Proposition}
\newtheorem*{mainlemma*}{Main Lemma}
\newtheorem{conjecture}{Conjecture}
\newtheorem*{conjecture*}{Conjecture}
\theoremstyle{definition}
\newtheorem{definition}{Definition}
\newtheorem{question}{Question}
\newtheorem*{question*}{Question}
\newtheorem{example}{Example}
\numberwithin{example}{section}
\newtheorem{remark}{Remark}
\begin{document}

\title[On linguistic subsets of groups]{On linguistic subsets of groups and monoids} %\\ \texttt{Draft version: \today}}

\author{André Carvalho}
\address{Center of Mathematics of the University of Porto, Department of Mathematics, Faculty of Sciences of the University of Porto\\ Rua do Campo Alegre, s/n, 4169-007 Porto, Portugal}
\email{andrecruzcarvalho@gmail.com}

\author{Carl-Fredrik Nyberg-Brodda}
\address{June E.\ Huh Center for Mathematical Challenges, Korea Institute for Advanced Study (KIAS), Seoul 02455, Republic of Korea}
\email{cfnb@kias.re.kr}

\thanks{The first author was supported by CMUP, which is financed by national funds through FCT -- Fundação para a Ciência e a Tecnologia, I.P., under the projects with reference UIDB/00144/2020 and UIDP/00144/2020. The second author is supported by Mid-Career Researcher Program (RS-2023-00278510) through the National Research Foundation funded by the government of Korea, and by the KIAS Individual Grant (MG094701) at Korea Institute for Advanced Study.}

%    General info
\subjclass[2020]{}

\date{\today}

%\dedicatory{ Supported by grant.}

\keywords{}

\begin{abstract}
We study subsets of groups and monoids defined by language-theoretic means, generalizing the classical approach to the word problem. We expand on results by Herbst from 1991 to a more general setting, and for a class of languages $\cc$ we define the classes of $\cc^\forall$-flat and $\cc^\exists$-flat groups. We prove several closure results for these classes of groups, prove a connection with the word problem, and characterize $\cc^\forall$-flat groups for several classes of languages. In general, we prove that the class of $\cc^\forall$-flat groups is a strict subclass of the class of groups with word problem in $\cc$, including for the class $\REC$ of recursive languages, for which $\cc^\forall$-flatness for a group resp.\ monoid is proved to be equivalent to the decidability of the subgroup membership problem resp.\ the submonoid membership problem. We provide a number of examples, including the Tarski monsters of Ol'shanskii, showing the difficulty of characterizing $\cc^\exists$-flat groups. As an application of our general methods, we also prove in passing that if $\cc$ is a full semi-$\AFL$, then the class of epi-$\cc$ groups is closed under taking finite index subgroups. This answers a question recently posed by Al Kohli, Bleak \& Elliott. 
\end{abstract}

\maketitle

\noindent Groups have been studied by language-theoretic means for the past half century. Such methods were first considered by Anisimov \cite{Anisimov1971} in 1971, who characterized finite groups by means of regular languages, and initiated the investigation of groups with context-free \textit{word problem}. Here, the word problem refers to the language of all words, over some finite generating set, which represent the identity element of the group. That line of investigation received its capstone in 1983 by Muller \& Schupp \cite{Muller1983}, who proved that a finitely generated group has context-free word problem if and only if it is virtually free, i.e.\ has a free subgroup of finite index (this result relies on a subsequent theorem on the \textit{accessibility} of finitely presented groups by Dunwoody \cite{Dunwoody1985}). Since then, language-theoretic methods and vocabulary have permeated many parts of group theory, from the theory of automatic groups \cite{Wordprocessing}, to indexed combings of $3$-manifold groups \cite{Bridson1993,Bridson1996}, to regular languages of geodesics in hyperbolic groups. 

In this article, we will study subsets of groups and monoids defined by language-theoretic means. Specifically, let $M$ be a monoid (or group) generated by some finite set $A$, with associated surjective homomorphism $\pi \colon A^\ast \to M$. Let $\cc$ be a class of languages. Then certain subsets of $M$ can be distinguished using the class $\cc$ together with the homomorphism $\pi$. Specifically, we say that $X \subseteq M$ is a $\cc^\forall$\textit{-subset} if $\pi^{-1}(X) \in \cc$, and we say that $X$ is a $\cc^\exists$\textit{-subset} if there exists some $L \in \cc$ such that $\pi(L) = X$. One particularly simple example comes from the classical word problem: if $M$ is a group, then the word problem of $M$ lies in $\cc$ if and only if $\{ 1 \}$ is a $\cc^\forall$-subset. For ease of notation, we often discuss $\cc^\bullet$-subsets ambiguously, referring to results that hold true for both $\cc^\forall$-subsets and $\cc^\exists$-subsets verbatim. In general, $\cc^\bullet$-subsets have been studied in the literature before: for example, when $\cc = \REG$, the class of regular languages, the set of $\cc^\forall$-subsets resp.\ $\cc^\exists$-subsets are called the \textit{recognizable} resp.\ \textit{rational subsets} of $M$. However, in this article we take a foundational and particularly general approach, building on work by Herbst \cite{Herbst1991}. We will also define a notion of \textit{relative $\cc^\bullet$-subset} for finitely generated subgroups and submonoids.

The outline of the article is as follows. In \S\ref{Sec:1Prelim}, we present the necessary language-theoretic background, together with a brief overview of its connection with group-theoretic and monoid-theoretic results. In \S\ref{Sec:2ccsubsets}, we define the notion of \textit{$\cc^\bullet$-subsets} and \textit{relative $\cc^\bullet$-subsets}, and set up a general framework for studying such subsets. We characterize the relative $\cc^\bullet$-subsets of a finite index subgroup inside a group, and use this to give an answer to a recent question of Al Kohli, Bleak \& Elliott regarding \textit{epi-$\cc$ groups}. In \S\ref{Sec:3flatmonoidsgeneral}, we introduce the notion of \textit{$\cc^\bullet$-flat} monoids and groups, and study some general properties of these classes. In \S\ref{Sec:4flatgroupsforspecific}, we use these general properties to give a full characterization of $\cc^\forall$-flat groups for certain classes of languages, including the one-counter languages $\OC$, the context-free languages $\CF$, the recursively enumerable languages $\RE$, and the recursive languages $\REC$. For the last class, we prove the equivalence of $\cc^\forall$-flatness as a group resp.\ as a monoid with the decidability of the subgroup resp.\ submonoid membership problem. Using the Tarski monsters of Ol'shanskii, we prove that the class of $\RE^\exists$-flat groups is strictly larger than the class of $\RE^\forall$-flat groups. Finally, in \S\ref{Sec:5openproblems} we present a tentative link to the conjugacy problem, and list a number of open problems and questions.

\section*{Acknowledgements} 

The authors wish to thank Raad Al Kohli, Collin Bleak, and Luna Elliott for helpful discussions pertaining to their preprint \cite{Kohli2025}, and Corentin Bodart for many interesting and helpful discussions and pointers leading to a number of improvements, including a strengthening of Proposition~\ref{Prop:cc-flatness-preserved-by-finite-index}.

\section{Preliminaries}\label{Sec:1Prelim}

\noindent In this section, we will fix some notation and present the necessary background required for our investigations. None of the definitions or results presented herein are new. We assume the background is familiar with the rudiments of formal language theory and combinatorial group theory, particularly presentations of groups, and refer the reader to \cite{Hopcroft1979} for a general introduction to the former, and to \cite{MagnusKarrassSolitar, LyndonSchupp} for the latter. 

\subsection{Language-theoretic background}

Let $\mathfrak{A}$ be a countably infinite set of symbols; this set is considered as an ambient notational convenience, and we will assume it contains all symbols we might ever require of it. An \textit{alphabet} is any finite subset $A \subset \mathfrak{A}$. The free semigroup resp.\ the free monoid on $A$ is denoted $A^+$ resp.\ $A^\ast$, and the empty word is written $1 \in A^\ast$. A \textit{language} $L$ is a set such that there is some $A \subset \mathfrak{A}$ such that $L \subseteq A^\ast$. A \textit{class of languages} $\cc \subset 2^{\mathfrak{A}^\ast}$ is a non-empty set of non-empty languages. A \textit{monoid presentation} is a pair $(A, \mathcal{R})$ consisting of an alphabet $A$ and a set of pairs of words $\mathcal{R} \subseteq A^\ast \times A^\ast$. Letting $A^{-1}$ denote a set disjoint from and in bijective correspondence with $A$ via an involution $a \mapsto a^{-1}$, we can consider the set $\mathcal{T}_A$ of all ``trivial'' group relations $(aa^{-1}, 1)$ and $(a^{-1}a, 1)$ for all $a \in A$. The monoid presentation $(A \cup A^{-1}, \cR \cup \mathcal{T}_A)$ is then called a \textit{group presentation}. A monoid presentation resp.\ a group presentation $(A, \mathcal{R})$ is denoted $\pres{Mon}{A}{\mathcal{R}}$ resp.\ $\pres{Gp}{A}{\mathcal{R}}$. For notational ease, a relation $(u, v)$ will always be denoted $u = v$. Any monoid presentation $\pres{Mon}{A}{\cR}$ gives rise to a monoid as the quotient $A^\ast / \varrho_{\cR}$, where $\varrho_{\cR}$ is the least congruence on $A^\ast$ containing $\cR$. Any finitely generated monoid clearly admits a presentation. In the case of a group presentation, the corresponding quotient is clearly always a group. Throughout this article, if nothing else is specified, all groups and monoids will be assumed to be finitely generated. 

Because there are $2^\mathfrak{c} = \beth_2$ possible classes of languages, it is natural to consider only consider certain special classes. We assume the reader is familiar with the classes of regular, context-free, context-sensitive, recursive, and recursively enumerable languages; which we denote by $\REG, \CF, \CS, \REC$ and $\RE$, respectively. Details on all these languages can be found in \cite{Hopcroft1979}. The class of \textit{one-counter languages} is denoted $\OC$, see \cite{Herbst1991}. Generally speaking, if $\cc$ is a class of languages, then we say that $\cc$ is closed under intersection with regular languages if for all $L \in \cc$ and $R \in \REG$ we have $L \cap R \in \cc$. We say that $\cc$ is closed under homomorphisms if $\cc$ contains all homomorphic images of all languages in $\cc$ with respect to homomorphisms $\varphi \colon A^\ast \to B^\ast$, where $A, B$ are alphabets. If the same is true when taking pre-images of such homomorphisms, then we say that $\cc$ is closed under inverse homomorphism. We say that $\cc$ is a \textit{cone} if $\cc$ is closed under homomorphism, inverse homomorphism, and intersection with regular languages. If $\cc$ is closed under inverse homomorphism and intersection with regular languages, and additionally closed under homomorphisms whose image do not contain the empty word (\textit{non-erasing homomorphisms}), then $\cc$ is called a \textit{trio}. If $\cc$ is a cone and closed under union, then we say that $\cc$ is a \textit{full semi-$\AFL$} (where $\AFL$ abbreviates \textit{abstract family of languages}). Finally, if $\cc$ is a full semi-$\AFL$ closed under concatenation and the Kleene star $^\ast$, then we say that $\cc$ is a \textit{full $\AFL$.} We remark that any trio (and so, in particular, any full semi-AFL) contains the class of all regular languages.

\subsection{Groups and languages}

Monoid presentations, and especially group presentations, give rise to many natural languages, and studying languages defined in group-theoretic ways has a long history. In this section, we present some of the fundamental results in this line. Let $G$ be a finitely generated group, let $A$ be a finite (monoid) generating set for $G$, and let $\pi \colon A^\ast \to G$ be a surjective homomorphism. Then the set 
\begin{equation}\label{Eq:WP-definition}
\WP_A^G = \{ w \mid w \in A^\ast, \pi(w) = 1 \}
\end{equation}
of all words in $A$ representing the identity element of $G$ is called the \textit{word problem} of $G$. The study of this language led to the introduction of language-theoretic methods in group theory, with the following three results being central: 

\begin{theorem}\label{Thm:AnisimovMSHerbst}
Let $G = \pres{}{A}{R}$ be a finitely generated group. Then we have:
\begin{enumerate}
\item  $\WP_A^G \in \REG$ $\iff$ $G$ is finite (Anisimov, 1971 \cite{Anisimov1971}).
\item  $\WP_A^G \in \CF$ $\iff$ $G$ is virtually free (Muller \& Schupp, 1983 \cite{Muller1983}).
\item $\WP_A^G \in \OC$ $\iff$ $G$ is virtually cyclic (Herbst, 1991 \cite{Herbst1991}).
\end{enumerate}
\end{theorem}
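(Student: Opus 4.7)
The plan is to handle the three equivalences in parallel, noting that each has a ``constructive'' direction (exhibiting an automaton from the algebraic hypothesis) and a ``structural'' converse (deducing algebraic information from the language-theoretic hypothesis). The constructive directions are the easier half. When $G$ is finite, the right Cayley graph of $G$ with respect to $A$ is a finite labelled digraph; marking the identity vertex as the unique accept state and as the start state produces a DFA recognising $\WP_A^G$, giving one direction of (1). When $G$ is virtually free, I would first handle the free case $G = F_n$ by constructing a deterministic pushdown automaton that maintains the unique freely reduced representative of the currently read prefix on its stack, pushing each new letter and cancelling adjacent inverses; the PDA accepts iff the stack is empty. The virtually free case then follows from a standard closure argument: the property ``$\WP_A^G \in \cc$'' is preserved under passage to finite extensions whenever $\cc$ contains the regular languages and is closed under inverse homomorphism and intersection with regular sets. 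For (3), a one-counter automaton tracking the signed exponent sum recognises $\WP_A^{\Z}$, and the same finite-extension closure argument covers the virtually cyclic case.

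For the structural converses, I would argue as follows. In (1), since $\WP_A^G$ is a union of $\pi^{-1}$-fibres, its syntactic monoid surjects onto $G$; regularity forces the syntactic monoid to be finite, and hence $G$ is finite. For (2), the hard direction is Muller--Schupp: from a context-free grammar for $\WP_A^G$, one extracts a uniform bound on the ``end-type'' structure of the Cayley graph of $G$, namely that after removing a ball of any radius $n$ only finitely many isomorphism types of connected components occur up to translation by group elements. This is precisely the context-free graph condition; combined with Dunwoody's accessibility theorem for finitely presented groups, it yields a splitting of $G$ as the fundamental group of a finite graph of finite groups, which is virtually free. For (3), since $\OC \subseteq \CF$, Muller--Schupp already gives that $G$ is virtually free. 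The additional one-counter hypothesis bounds the stack height by a linear function of input length, which via a standard pumping argument constrains the growth function of $G$ to be linear; a virtually free group of linear growth has free rank at most one, hence is virtually cyclic.

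The main obstacle will clearly lie in the converse of (2): the bridge from a context-free grammar of $\WP_A^G$ to Dunwoody-style accessibility is nontrivial, and the accessibility theorem itself is a deep geometric/combinatorial result about splittings over finite subgroups which I would take as a black box. The delicate intermediate step is to verify that the pushdown structure of the grammar actually translates into the ``finitely many end-types up to translation'' hypothesis feeding Dunwoody's theorem; this uses the triangulated form of a context-free grammar together with the vertex-transitivity of the Cayley graph to propagate local pumping information globally. For (3), the additional subtlety is to extract the right growth bound from the OC hypothesis rather than merely the CF one, since the cruder pumping lemma for context-free languages does not by itself rule out, say, free groups of rank two.
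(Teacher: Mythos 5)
The first thing to note is that the paper does not prove Theorem~\ref{Thm:AnisimovMSHerbst} at all: it is stated as a black-box citation of Anisimov, Muller--Schupp (together with Dunwoody's accessibility theorem), and Herbst, so there is no internal proof to compare against. Judged on its own terms, your sketch of (1) and of both directions of (2) is a faithful outline of the classical arguments: the Cayley-graph DFA and the syntactic-monoid converse for (1) (indeed the syntactic congruence of $\WP_A^G$ is exactly $\ker\pi$, so the syntactic monoid is isomorphic to $G$), and the reduced-word DPDA plus finite-extension closure, respectively the context-free-graph/accessibility route, for (2). One caution on the closure step: the Schreier rewriting that reduces $\WP(G)$ to $\WP(H)$ for $H \leqfi G$ is an inverse rational transduction, so one should assume $\cc$ is a cone (closed under homomorphism as well); inverse homomorphism and intersection with regular sets alone are not obviously sufficient, though nothing is lost here since $\CF$ and $\OC$ are both cones.

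The genuine gap is in the converse of (3). You propose to derive linear growth of $G$ from the one-counter hypothesis by bounding configurations. This works for a \emph{deterministic} one-counter automaton, where the single configuration reached after reading $u$ determines the residual $u^{-1}\WP_A^G$, so the ball of radius $n$ in $G$ injects into the $O(n)$ reachable configurations. But $\OC$ is the class accepted by \emph{nondeterministic} one-counter automata, and there the residual is determined only by the \emph{set} of reachable configurations, giving an a priori bound of $2^{O(n)}$ on the number of residuals of words of length at most $n$ --- which is perfectly consistent with the exponential growth of $F_2$ and rules out nothing. There is no ``standard pumping argument'' that extracts linear growth from a nondeterministic one-counter acceptor, and this is exactly where the real work in Herbst's theorem lies. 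The standard repair avoids growth entirely: since $\OC$ is a cone, the property of having a one-counter word problem passes to finitely generated subgroups, so a virtually free but not virtually cyclic $G$ would force $\WP(F_2) \in \OC$; one then intersects $\WP(F_2)$ with the regular language $a^\ast b^\ast (b^{-1})^\ast (a^{-1})^\ast$ to obtain a copy of $\{a^i b^j c^j d^i \mid i, j \geq 0\}$, the nested-matching language, which is context-free but provably not one-counter. You should replace the growth step with an argument of this kind, or else restrict to deterministic one-counter automata and separately justify that this loses no generality --- which is itself a nontrivial claim.
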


We remark that the independence of finite generating set $A$ chosen for $G$ for the language-theoretic properties of $\WP_A^G$, at least for the three classes in Theorem~\ref{Thm:AnisimovMSHerbst}, is easily seen from the algebraic nature of the right-hand side of all biconditionals in the same theorem. More generally, it is not difficult to show that if $\cc$ is a cone, then if $A, B$ are two finite generating sets of a monoid $M$ we have that $\WP_A^M \in \cc$ if and only if $\WP_B^M \in \cc$. 

\begin{remark}\label{Rem:Monoids-WP}
All of the definitions in this present article carry over verbatim from groups to monoids. Some of our results also hold for monoids, and as such we will state most results in a general way. However, in the case of the word problem \eqref{Eq:WP-definition}, very little can be said in the case of monoids: indeed, for many monoids, this language carries essentially no algebraic information whatsoever. Alternative definitions of a monoid-theoretic word problem have been proposed, with one by Duncan \& Gilman \cite{Duncan2004} having seen a good deal of activity in the past few years \cite{Brough2019,Kostolanyi2025,NybergBrodda2022}. In particular, it can be shown that the set \eqref{Eq:WP-definition} carries much algebraic information when all defining relations of the monoid are of the form $r = 1$, i.e.\ when the monoid is \textit{special}, see \cite{NybergBrodda2022}. 
\end{remark}

The language of all words representing just the identity element is rather restrictive (especially for monoids, as indicated in Remark~\ref{Rem:Monoids-WP}), and it is thus of interest to place this language in a more general setting. Two early such generalizations come from \textit{rational} and \textit{recognizable} subsets. Let $M$ be a finitely generated group, $A$ be a finite set of semigroup generators and $\pi:A^*\to G$ be a surjective homomorphism. A subset $X\subseteq M$ is said to be \textit{rational} if there is some regular language $L\subseteq \tilde A^*$ such that $\pi(L)=K$. We say that $X$ is \textit{recognizable} if the full pre-image $\pi^{-1}(X)$ is a regular language. Clearly, every recognizable subset is rational. On the other hand, in the case of a group $G$, the subset $\{ 1 \} \subseteq G$ is clearly always rational, but it is recognizable if and only $G$ is finite, by Theorem~\ref{Thm:AnisimovMSHerbst}. An early success in the study of the rational and recognizable subsets appeared in 1975, and can be stated as follows:

\begin{theorem}[Anisimov \& Seifert \cite{Anisimov1975}]\label{Thm:Anisimov-Seifert}
Let $G$ be a finitely generated group, and let $H \leq G$. Then:
\begin{enumerate}
\item $H$ is a rational subset of $G$ if and only if $H$ is finitely generated.
\item $H$ is a recognizable subset of $G$ if and only if $[G : H] < \infty$. 
\end{enumerate}
\end{theorem}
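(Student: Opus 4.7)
The plan is to prove each of the four implications separately. Two directions are essentially constructive and immediate: if $H = \langle h_1, \ldots, h_n \rangle$ is finitely generated, I pick words $w_i, w_i'\in A^\ast$ with $\pi(w_i) = h_i$ and $\pi(w_i') = h_i^{-1}$, and take $L = (w_1 \cup w_1' \cup \cdots \cup w_n \cup w_n')^\ast \in \REG$, which maps onto $H$; if $[G:H] < \infty$, the Schreier coset automaton with state set $G/H$, initial and sole final state $H$, and transitions $gH \xrightarrow{a} (ga)H$ is a finite DFA whose accepted language is precisely $\pi^{-1}(H)$.

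For the forward direction of (1), I would take a DFA $\mathcal{A} = (Q, A, \delta, q_0, F)$ accepting a regular $L$ with $\pi(L) = H$, and trim it so that every state is both accessible from $q_0$ and co-accessible to $F$. For each $q \in Q$ I fix words $u_q, v_q \in A^\ast$ with $\delta(q_0, u_q) = q$ and $\delta(q, v_q) \in F$, taking $u_{q_0} = 1$. The proposed finite generating set is
\[
T = \bigl\{ \pi(u_p)\pi(a)\pi(u_q)^{-1} : p \xrightarrow{a} q \text{ is a transition in } \mathcal{A} \bigr\} \cup \bigl\{ \pi(u_q) : q \in F \bigr\} \subseteq G.
\]
Each element of $T$ lies in $H$, since $u_p a v_q,\ u_q v_q \in L$ and $\pi(u_p a v_q)\cdot \pi(u_q v_q)^{-1} = \pi(u_p)\pi(a)\pi(u_q)^{-1}$. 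Conversely, for any accepted word $w = a_1 \cdots a_n$ with computation $q_0 \xrightarrow{a_1} q_1 \cdots \xrightarrow{a_n} q_n \in F$, the telescoping identity
\[
\pi(w) = \Bigl(\prod_{i=1}^{n}\pi(u_{q_{i-1}})\pi(a_i)\pi(u_{q_i})^{-1}\Bigr)\cdot \pi(u_{q_n})
\]
shows $\pi(w) \in \langle T \rangle$, so $H = \langle T \rangle$ is finitely generated.

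For the forward direction of (2), I would appeal directly to the Myhill--Nerode theorem: since $\pi^{-1}(H) \in \REG$, the syntactic equivalence $u \sim v \iff \forall w\in A^\ast\,(\pi(uw) \in H \Leftrightarrow \pi(vw)\in H)$ has finitely many classes. The crucial step is to show $u \sim v \Rightarrow H\pi(u) = H\pi(v)$: since $A$ generates $G$ as a monoid, there exists $w \in A^\ast$ with $\pi(w) = \pi(v)^{-1}$, and then $\pi(vw) = 1 \in H$ forces $\pi(uw) = \pi(u)\pi(v)^{-1} \in H$. Hence the map $[u] \mapsto H\pi(u)$ from $\sim$-classes to $H\backslash G$ is a well-defined surjection, bounding $[G:H]$ by the finite number of classes.

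The substantive work is concentrated in the forward direction of (1). The main obstacle I expect is the bookkeeping of trimming the automaton and choosing coset-like representatives $u_q$ carefully enough that the inclusions $T \subseteq H$ and $H \subseteq \langle T \rangle$ hold simultaneously; this is a Schreier-style rewriting applied to the transition graph of the automaton, and once that setup is in place, the telescoping identity does the rest.
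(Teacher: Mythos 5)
The paper does not prove this statement; it is quoted as a classical theorem with a citation to Anisimov \& Seifert, so there is no internal proof to compare against. Your argument is the standard one for this result (Schreier-style rewriting along an accepting run for the forward direction of (1), the coset automaton and Myhill--Nerode for (2)), and it is essentially correct: the set $T$ lies in $H$ because $u_pav_q$ and $u_qv_q$ both lie in $L$, the telescoping identity gives $H\subseteq\langle T\rangle$, and the map $[u]\mapsto H\pi(u)$ is a well-defined surjection onto the right coset space. The one genuine slip is in the backward direction of (2): the transition $gH\xrightarrow{a}(ga)H$ on \emph{left} cosets is not well-defined unless $H$ is normal (if $gH=g'H$ with $g'=gh$, then $gaH$ and $ghaH$ need not coincide, since that would require $a^{-1}ha\in H$). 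You should instead take the state set to be the right cosets $H\backslash G$ with transitions $Hg\xrightarrow{a}Hga$, which is well-defined and accepts exactly $\pi^{-1}(H)=\{w: H\pi(w)=H\}$; with that correction the proof is complete.
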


In fact, there is an explicit description of the recognizable subsets of groups: namely, any recognizable subset is a finite union of cosets of a finite index subgroup. Next, Frougny, Sakarovitch \& Schupp \cite{Frougny1989} noted that the proof given by Anisimov \& Seifert in proving Theorem~\ref{Thm:Anisimov-Seifert} actually yields the following strong result: 

\begin{proposition}\label{Prop:All-REGe-flat-old}
Let $X$ be a rational subset of a group $G$, and let $H \leqfg G$ be a finitely generated subgroup of $G$. If $X$ is contained in $H$, then $X$ is a rational subset also as a subset of $H$. 
\end{proposition}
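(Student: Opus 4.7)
The strategy is to translate a regular representation of $X$ over the generators of $G$ into one over the generators of $H$. Fix surjective monoid homomorphisms $\pi\colon \tilde A^*\to G$ and $\pi_H\colon \tilde B^*\to H$ associated to finite semigroup generating sets, and for each $b\in \tilde B$ fix a word $\alpha_b\in \tilde A^*$ with $\pi(\alpha_b)=\pi_H(b)$. Let $\sigma\colon \tilde B^*\to \tilde A^*$ be the monoid homomorphism sending $b\mapsto\alpha_b$, so that $\pi\circ\sigma=\pi_H$ once $H$ is identified with its image in $G$. The image $\sigma(\tilde B^*)\subseteq \tilde A^*$ is then already a regular language whose $\pi$-image is $H$; this by itself recovers the ``finitely generated implies rational'' direction of Theorem~\ref{Thm:Anisimov-Seifert}(1), and is the starting point for the strengthening.

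The key reduction is as follows: to prove $X$ is rational in $H$, it suffices to exhibit a regular language $L_0\subseteq \sigma(\tilde B^*)$ with $\pi(L_0)=X$. Indeed, setting $L':=\sigma^{-1}(L_0)\subseteq \tilde B^*$ then gives a regular language by closure of $\REG$ under inverse homomorphism, and $\pi_H(L')=\pi(\sigma(L'))=\pi(L_0)=X$, establishing the rationality of $X$ inside $H$. Given any regular $L\subseteq \tilde A^*$ with $\pi(L)=X$ and an NFA $\mathcal A$ accepting $L$, the natural construction of $L_0$ is via a product-style NFA $\mathcal A_0$ whose states augment those of $\mathcal A$ with a finite pointer tracking the position inside a currently-in-progress $\alpha_b$-block: each transition advances $\mathcal A$'s state only when consistent with spelling out the block letter-by-letter, and resets the pointer whenever a full $\alpha_b$ has been completed. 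Accepted words are by construction concatenations of $\alpha_b$'s that trace accepting paths of $\mathcal A$, hence lie in $\sigma(\tilde B^*)\cap L$ and map into $X$ under $\pi$.

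The main obstacle is the surjectivity $\pi(L_0)=X$: a word $w\in L$ representing some $x\in X$ need not factor literally as a concatenation of $\alpha_b$-blocks, so a naive $\mathcal A_0$-construction can miss elements of $X$. The resolution, which is exactly the strengthening Frougny, Sakarovitch and Schupp extract from the Anisimov--Seifert proof, combines a free-reduction preprocessing of $L$ (valid in any group, since $aa^{-1}$ is always trivial, following Benois' argument) with the essential use of the finite generation of $H$: after enlarging the finite set $\{\alpha_b : b\in \tilde B\}$ by finitely many additional representatives and conjugates obtained from the loop structure of $\mathcal A$, every $x\in X$ becomes realisable as a block-concatenation along an accepting path of $\mathcal A$. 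Verifying that this enlargement can be kept finite is the technical heart of the argument; once it is in place, the reduction above closes the proof.
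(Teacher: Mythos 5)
Your reduction in the second paragraph is sound: if you produce a regular language $L_0\subseteq\sigma(\tilde B^*)$ with $\pi(L_0)=X$, then $\sigma^{-1}(L_0)$ is regular and witnesses $X\in\REG^\exists(H)$. The problem is that you never actually produce such an $L_0$. Your third paragraph correctly identifies the obstruction --- a word of $L$ representing $x\in X$ need not factor literally into $\alpha_b$-blocks --- but the proposed resolution (``after enlarging the finite set $\{\alpha_b\}$ by finitely many additional representatives and conjugates obtained from the loop structure of $\mathcal A$, every $x\in X$ becomes realisable as a block-concatenation along an accepting path'') is exactly the claim that needs proof, and no argument is given for it; you flag it yourself as ``the technical heart.'' As written this is a genuine gap, not a proof. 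I would add that free-reduction preprocessing does not address the difficulty: Benois-type saturation only normalizes a word within its class in the free group on $\tilde A$, whereas the failure of a word of $L$ to factor into $\alpha_b$-blocks is a phenomenon in $G$, and it is not clear that any finite enlargement of $\{\alpha_b\}$ built from loops of $\mathcal A$ repairs it.

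For comparison, the paper itself only cites Anisimov--Seifert and Frougny--Sakarovitch--Schupp here, and their argument closes the gap by a different device: it does not try to rewrite words of $L$ over the given generators of $H$ at all, but manufactures a \emph{new} finite subset of $H$ from the automaton. Take a trim NFA $\mathcal A$ for $L$ with $\pi(L)=X$; for each state $q$ choose an access word $u_q$ (with $u_{q_0}$ empty) and a co-access word $v_q$ (empty when $q$ is final), and for each transition $t=(p,a,q)$ set $h_t=\pi(u_pav_q)\,\pi(u_qv_q)^{-1}$. Since $u_pav_q$ and $u_qv_q$ both label accepting paths, they lie in $L$, so both images lie in $X\subseteq H$ and hence $h_t\in H$; this is the one place the hypothesis $X\subseteq H$ is used. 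A telescoping computation along an accepting path for $w=a_1\cdots a_n$ gives $\pi(w)=h_{t_1}\cdots h_{t_n}\pi(u_{q_n})$ with $\pi(u_{q_n})\in H$. Relabelling each transition $t$ by a fresh letter denoting $h_t$ (and appending exit letters for the elements $\pi(u_f)$, $f$ final) turns $\mathcal A$ into an automaton over a finite subset of $H$ whose image is exactly $X$; expressing each $h_t$ and $\pi(u_f)$ as a word over $\tilde B$ then yields $X\in\REG^\exists(H)$. If you replace your block-tracking construction with this relabelling step, your overall framework goes through.
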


In this present article, we will refer to such a property as \textit{flatness}, and we will present it in more detail in \S\ref{Subsec:flatness}; in particular, we will rephrase Proposition~\ref{Prop:All-REGe-flat-old} in our new language as simply stating that \textit{all groups are $\REG^\exists$-flat} (see Proposition~\ref{Prop:AnisimovSeifert-rephrased-as-flatness}). Furthermore, in Carvalho \cite{Carvalho2023}, some results on extending Proposition~\ref{Prop:All-REGe-flat-old} to context-free languages are given; we will present these results shortly (Theorem~\ref{Thm:REG-forall-CF-forall-characterized}). The overall goal of the present article is to provide a unified framework for studying subsets of groups defined in the above language-theoretic manner, and to provide generalizations of results of the same type as Theorem~\ref{Thm:Anisimov-Seifert} and Proposition~\ref{Prop:All-REGe-flat-old}.

\section{General properties of $\cc$-subsets}\label{Sec:2ccsubsets}

\noindent In this section, we set up a framework for studying subsets of groups and monoids defined by language-theoretic means. We begin with a general definition of $\cc$-subsets, following Herbst \cite{Herbst1991}, and establish some general properties of such subsets. We will then introduce the key new idea of $\cc^\forall$\textit{-flat} and $\cc^\exists$\textit{-flat} groups and monoids (ambiguated as $\cc^\bullet$-flatness), which will be central throughout this article.

\subsection{$\cc^\bullet$-subsets}

Let $\cc$ be a class of languages. Let $M$ be a finitely generated monoid, with finite monoid generating set $A$ and associated surjective homomorphism $\pi \colon A^\ast \to G$. Let $S$ be a subset of $M$. Then we say that $S$ is a \textit{$\cc^\forall$-subset of $M$} if its full pre-image under $\pi$ is a language in $\cc$. We say that $S$ is a \textit{$\cc^\exists$-subset of $M$} if there is some language in $\cc$ whose image under $\pi$ is $S$. The set of all $\cc^\forall$-subsets of $M$ is denoted $\cc^\forall(M)$, and analogously the set of all $\cc^\exists$-subsets of $M$ is denoted $\cc^\exists(M)$. Symbolically, we have the following:
\begin{align}
S \in \cc^\forall(M) \quad &\iff \quad \exists L \in \cc, \forall s \in S, \forall x \in \pi^{-1}(s) : \left( \pi(L) = S \text{ and } x \in L \right), \\
S \in \cc^\exists(M) \quad &\iff \quad \exists L \in \cc, \forall s \in S, \exists x \in \pi^{-1}(s) : \left( \pi(L) = S \text{ and } x \in L \right).
\end{align}
This notation and terminology was introduced by Herbst \cite{Herbst1991}, in his study of subsets of context-free groups. We note that the only difference in the two definitions above is the single quantifier on the elements of the pre-image, which gives rise to our notation. Furthermore, we clearly have $\cc^\forall(M) \subseteq \cc^\exists(M) \subseteq 2^M$ for all classes $\cc$ and all monoids $M$. Throughout this article, many of our elementary, and some more complicated, results will be equally true for $\cc^\forall$-subsets as for $\cc^\exists$-subsets. We will thus occasionally ambiguously refer to $\cc^\bullet$-\textit{subsets} of $M$, and will use this notation e.g.\ in theorems that are true for the set of $\cc^\forall$-subsets \textit{and} for the set of $\cc^\exists$-subsets. 

Note that the notation $\cc^\bullet(M)$ is somewhat underspecified, in that it does not refer to the choice of finite generating set $A$, and one might wish to introduce notation akin to $\cc^\bullet_A(M)$ to remedy this. However,  it turns out that if $\cc$ is a class of languages closed under inverse homomorphisms, then the set of $\cc^\bullet$-subsets of $M$ does not depend on the generating set chosen. This is a direct consequence of the following somewhat technical, but otherwise simple, lemma.

\begin{lemma}[Herbst {\cite[Lemma~4.3]{Herbst1991}}]\label{Lem:Herbst-lemma-2.3}
Let $M$ and $M'$ be finitely generated monoids, generated by $A$ and $A'$, respectively, with associated surjections $\varphi \colon A^\ast \to M$ and $\varphi' \colon (A' )^\ast \to M'$. Let $\tau \colon M' \to M$ be any homomorphism, and let $X \subseteq M$ be any subset, setting $X' = \tau^{-1}(X) \subseteq M'$. Let $L_1 = (\varphi')^{-1}(T')$ and $L_2 = \varphi^{-1}(T)$. Then there exists a homomorphism $h \colon (A')^\ast \to A^\ast$ such that $L_1 = h^{-1}(L_2)$. 
\[
\begin{tikzcd}
L_1 \subseteq (A')^\ast \arrow[d, "\varphi'"'] \arrow[r, "h", dotted] & A^\ast \supseteq L_2 \arrow[d, "\varphi"] \\
X' \subseteq M' \arrow[r, "\tau"']                                    & M \supseteq X                            
\end{tikzcd}
\]
\end{lemma}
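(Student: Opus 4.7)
The plan is to construct the homomorphism $h$ by specifying it on the generating set $A'$ and extending freely, using surjectivity of $\varphi$. (I read the statement as intending $L_1 = (\varphi')^{-1}(X')$ and $L_2 = \varphi^{-1}(X)$, i.e.\ $T$ and $T'$ are typos for $X$ and $X'$; this is forced by the commutative diagram drawn below the statement.) For each generator $a' \in A'$, the element $\tau(\varphi'(a')) \in M$ has a non-empty preimage under $\varphi$ because $\varphi$ is surjective. Pick, by the axiom of choice, one such preimage word $w_{a'} \in A^\ast$, so that $\varphi(w_{a'}) = \tau(\varphi'(a'))$, and then define $h \colon (A')^\ast \to A^\ast$ to be the unique monoid homomorphism extending $a' \mapsto w_{a'}$.

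The next step is to verify that the resulting $h$ makes the outer square in the diagram commute, i.e.\ $\varphi \circ h = \tau \circ \varphi'$. Both sides are monoid homomorphisms $(A')^\ast \to M$, so it suffices to check equality on the generators $a' \in A'$; but this is exactly how $h(a') = w_{a'}$ was chosen.

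Once commutativity is established, the lemma follows by a direct chase of preimages. For any $w \in (A')^\ast$,
\[
w \in h^{-1}(L_2) \iff h(w) \in \varphi^{-1}(X) \iff \varphi(h(w)) \in X \iff \tau(\varphi'(w)) \in X,
\]
and
\[
\tau(\varphi'(w)) \in X \iff \varphi'(w) \in \tau^{-1}(X) = X' \iff w \in (\varphi')^{-1}(X') = L_1.
\]
Hence $L_1 = h^{-1}(L_2)$, as desired.

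There is no substantive obstacle: the lemma is really a soft exercise in lifting $\tau$ along $\varphi$ to a free-monoid level map. The only point worth flagging in the write-up is that $h$ is in no way canonical — it depends on the choice of preimage $w_{a'}$ for each generator — but this non-uniqueness is harmless since only the preimage $h^{-1}(L_2)$, which equals $L_1$ for every valid choice, is used in subsequent applications (typically to transfer closure under inverse homomorphism from $\cc$ to statements about $\cc^\bullet$-subsets being independent of generating set).
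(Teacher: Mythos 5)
Your proof is correct and is exactly the standard argument (the paper itself gives no proof, citing Herbst's Lemma~4.3, whose proof is this same lift of $\tau$ to the free monoids via chosen preimages of generators, followed by the preimage chase). Your reading of $T$, $T'$ as typos for $X$, $X'$ is also the right one.
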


In all cases arising in practice, the language classes $\cc$ we will consider will always be closed under inverse homomorphism. However, we will always specify this assumption when we use it in our theorems; thus, in the interest of avoiding clumsy notation, in all cases that this assumption is not explicitly specified, our theorems will be shown to hold for \textit{all} choices of finite generating sets $A$ (in practice, this is never a complicating factor). We now give some fundamental examples of $\cc^\bullet$-subsets, connecting it to existing terminology in the literature.

\begin{example}\label{Examples}
For any finitely generated monoid $M$, we have: 
\begin{enumerate}
\item $\REG^\exists(M)$ is the set of \textit{rational} subsets of $M$. Such subsets generalize finitely generated submonoids and subgroups, and are important from the point of view of the rational subset \textit{membership problem}, see e.g.\ \cite{Lohrey2015, Bartholdi2021}. By Theorem~\ref{Thm:Anisimov-Seifert}, for any group $G$, if $H \leq G$ is a subgroup then $H \in \REG^\exists(G)$ if and only if $H$ is finitely generated. 
\item $\REG^\forall(M)$ is the set of \textit{recognizable} subsets of $M$.  By Theorem~\ref{Thm:Anisimov-Seifert}, if $G$ is a group and $H \leq G$ is a subgroup, then $H \in \REG^\forall(G)$ if and only if the index of $H$ is finite in $G$, i.e.\ $[G : H] < \infty$. 
\item If $F$ is a finitely generated free monoid, then Kleene's Theorem \cite{Hopcroft1979} states that $\REG^\forall(F) = \REG^\exists(F)$. This equality also holds for finite monoids and, more generally, for \textit{rational} monoids in the sense of Sakarovitch \cite{Sakarovitch1987}.
\item $\CF^\exists(M)$ is called the set of \textit{algebraic} subsets of $M$, and $\CF^\forall(M)$ is called the set of \textit{context-free} subsets of $M$. The main result of Herbst \cite{Herbst1991} states that if $G$ is a finitely generated group, then $\CF^\forall(G) = \CF^\exists(G)$ if and only if $G$ is virtually cyclic, in which case also $\CF^\forall(G) = \REG^\exists(G) = \OC^\forall(G)$. 
\item Let
  $\cc$ be a class of languages closed under inverse homomorphism and containing all singleton languages, and let $G$ a group generated by the finite set $A$. Then the word problem $\WP_A^G$, in the sense of \eqref{Eq:WP-definition}, by definition satisfies: 
\[
\WP_A^G \in \cc \quad \iff \quad \{ 1 \}  \in \cc^\forall(G).
\]
By contrast, since $\cc$ contains all singleton languages, it is easy to see that we \textit{always} have $\{ 1 \} \in \cc^\exists(G)$.
\end{enumerate}
\end{example}

Note that if $\cc_1, \cc_2$ are classes of languages such that $\cc_1 \subseteq \cc_2$, e.g.\ $\REG \subset \CF$, then for every finitely generated monoid $M$ we always have the following square:
\begin{equation}\label{Eq:squares-of-inclusions}
\begin{tikzcd}
\cc_1^\forall(M) \arrow[r, "\subseteq", hook] \arrow[d, "\subseteq"', hook] & \cc_2^\forall(M) \arrow[d, "\subseteq", hook] \\
\cc_1^\exists(M) \arrow[r, "\subseteq"', hook]                              & \cc_2^\exists(M)                             
\end{tikzcd}
 \qquad \text{e.g.} \qquad
\begin{tikzcd}
\REG^\forall(M) \arrow[r, "\subseteq", hook] \arrow[d, "\subseteq"', hook] & \CF^\forall(M) \arrow[d, "\subseteq", hook] \\
\REG^\exists(M) \arrow[r, "\subseteq"', hook]                              & \CF^\exists(M)                             
\end{tikzcd}
\end{equation}
In general, the relationship between the subsets $\cc_1^\exists(M)$ and $\cc_2^\forall(M)$ appears to be rather weak. We pose the following question as a natural starting point for further investigations along these lines: 

\begin{question}\label{Quest:can-square-collapse}
Do there exist classes of languages $\cc_1, \cc_2$ such that $\cc_1 \subsetneq \cc_2$ but $\cc_1^\exists(G) = \cc_2^\forall(G)$ for all finitely generated groups (or monoids) $G$?
\end{question}

Natural candidates for classes for which Question~\ref{Quest:can-square-collapse} may have a positive answer includes those which are difficult to separate using word problems of groups, e.g.\ $\CF \subset \ETOL$. We will not linger on this or related questions, and instead turn to proving elementary properties of $\cc^\bullet$-subsets of monoids.

\begin{lemma}\label{Lem:closed-under-intersection-2.1}
Let $\cc$ be a class of languages closed under intersection with regular languages. Let $M$ be a finitely generated monoid, and let $R \in \REG^\forall(M)$. Then:
\[
X \in \cc^\bullet(M)  \implies  (X \cap R) \in \cc^\bullet(M).
\]
\end{lemma}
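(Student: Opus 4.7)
The plan is to treat the two meanings of $\cc^\bullet$ separately and, in each case, leverage the single hypothesis that $\cc$ is closed under intersection with regular languages. Observe first that the assumption $R \in \REG^\forall(M)$ means precisely that $\pi^{-1}(R) \in \REG$, so $\pi^{-1}(R)$ is available as a regular ``mask'' on the word side.

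For the $\cc^\forall$ case, I would use that pre-image commutes with intersection:
\[
\pi^{-1}(X \cap R) \;=\; \pi^{-1}(X) \cap \pi^{-1}(R).
\]
The first factor lies in $\cc$ by the hypothesis $X \in \cc^\forall(M)$, and the second is regular by the hypothesis on $R$, so the closure assumption on $\cc$ gives $\pi^{-1}(X \cap R) \in \cc$, which is the definition of $X \cap R \in \cc^\forall(M)$.

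For the $\cc^\exists$ case, I would pick some $L \in \cc$ with $\pi(L) = X$, and set $L' := L \cap \pi^{-1}(R)$. Again by closure of $\cc$ under intersection with regular languages, $L' \in \cc$. It then remains to verify $\pi(L') = X \cap R$: the forward inclusion follows from $L \subseteq \pi^{-1}(X)$ together with $\pi(\pi^{-1}(R)) \subseteq R$, and the reverse inclusion is immediate since any $m \in X \cap R$ admits a pre-image $w \in L$ with $\pi(w) = m$, and then $w \in \pi^{-1}(R)$ as well, so $w \in L'$.

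There is no real obstacle here; the only thing to be careful about is not to confuse $\pi(L) \cap R$ with $\pi(L \cap \pi^{-1}(R))$ in the wrong direction: the inclusion $\pi(L \cap \pi^{-1}(R)) \subseteq \pi(L) \cap R$ is automatic for any $L$, but the reverse inclusion used above relies on the surjection $\pi|_L \colon L \to X$ provided by the hypothesis $\pi(L) = X$, which is exactly what makes the $\cc^\exists$ half work.
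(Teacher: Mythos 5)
Your proposal is correct and follows essentially the same route as the paper's own proof: for the $\cc^\forall$ case the identity $\pi^{-1}(X \cap R) = \pi^{-1}(X) \cap \pi^{-1}(R)$ plus closure under intersection with regular languages, and for the $\cc^\exists$ case intersecting a witness $L$ with the regular language $\pi^{-1}(R)$ and checking that the image is exactly $X \cap R$. Your explicit verification of both inclusions of $\pi(L \cap \pi^{-1}(R)) = X \cap R$ is slightly more careful than the paper's, but the argument is the same.
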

\begin{proof}
We suppose $M$ is finitely generated by $A$, and let $\pi \colon A^\ast \to M$ be surjective. There are two claims to be proved: one for $\cc^\forall(M)$, and one for $\cc^\exists(M)$. We begin with $\cc^\forall(M)$. Let $X \in \cc^\forall(M)$, and for simplicity write $L := \pi^{-1}(X) \in \cc$. Then obviously
\begin{equation}\label{Eq:intersection-preimage}
\pi^{-1}(X \cap R) = \pi^{-1}(X) \cap \pi^{-1}(R) = L \cap \pi^{-1}(R)
\end{equation}
Since $R \in \REG^\forall(M)$, we have $\pi^{-1}(R) \in \REG$. Since $\cc$ is closed under intersection with regular languages, we thus have $\pi^{-1}(X \cap R) \in \cc$, so $(X \cap R) \in \cc^\forall(M)$. 

Next, consider the case of $X \in \cc^\exists(M)$. The above argument does not quite work in this case, since \eqref{Eq:intersection-preimage} cannot be used. However, since $R \in \cc^\forall(M)$, the argument can easily be modified to work. Let $L \in \cc$ be such that $\pi(L) = X$. Then $L \cap \pi^{-1}(R)$ has a representative for every element of $X \cap R$: indeed, if $x \in X \cap R$, then there is some $l \in L$ with $\pi(l) = x$, but since also $x \in R$ we have $l \in \pi^{-1}(x) \subseteq \pi^{-1}(R)$, so $l \in L \cap \pi^{-1}(R)$. Since $x \in X \cap R$ was arbitrary, we have $\pi(L \cap \pi^{-1}(R)) = X \cap R$. Since $\cc$ is closed under intersection with regular languages, $L \in \cc$, and $\pi^{-1}(R) \in \REG$, we conclude that $X \cap R \in \cc^\exists(M)$. 
\end{proof}

In the case of cones, when we additionally have closure under inverse homomorphism and arbitrary homomorphisms, by similar methods as the above proof we can easily obtain the following:

\begin{lemma}[Herbst {\cite[Lemma~4.1]{Herbst1991}}]\label{Lem:HerbstLemma4.1}
Let $\cc$ be a cone, let $M$ be a finitely generated monoid, and $R \in \REG^\exists(M)$. Then we have: 
\[
L \in \cc^\bullet(M) \implies LR, RL \in \cc^\bullet(M).
\]
\end{lemma}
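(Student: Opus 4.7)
The plan is to reduce both the $\cc^\forall$ and $\cc^\exists$ cases to the standard fact that every cone is closed under concatenation with regular languages, which follows from the three cone axioms via the usual AFL-theoretic ``marker'' trick. The first step is to verify this concatenation principle explicitly. Given $J \in \cc$ and $K \in \REG$ over an alphabet $A$, I would take a disjoint isomorphic copy $A'$ of $A$ together with the regular copy $K' \subseteq (A')^*$ of $K$, and consider the homomorphisms $h, g \colon (A \cup A')^* \to A^*$ defined by $h(a) = a$, $h(a') = 1$, and $g(a) = g(a') = a$. Then $h^{-1}(J) \in \cc$ by inverse homomorphism closure; intersecting with the regular language $A^* K'$ gives $h^{-1}(J) \cap A^* K' = J \cdot K' \in \cc$; and finally $g(J \cdot K') = JK \in \cc$ by homomorphism closure. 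The symmetric construction (using $K' A^*$ in place of $A^* K'$) yields $KJ \in \cc$.

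With this tool in hand, let $\pi \colon A^* \to M$ be the surjection associated to a finite generating set of $M$, and fix $K \in \REG$ with $\pi(K) = R$. For the $\cc^\exists$ case, pick a witness $L' \in \cc$ with $\pi(L') = L$; then $L' K \in \cc$ by the above, and $\pi(L' K) = \pi(L') \cdot \pi(K) = LR$, so $LR \in \cc^\exists(M)$. Symmetrically $K L' \in \cc$ witnesses $RL \in \cc^\exists(M)$. For the $\cc^\forall$ case, apply the same construction with the specific witness $L' := \pi^{-1}(L) \in \cc$, producing $\pi^{-1}(L) \cdot K \in \cc$ whose $\pi$-image is again $LR$, and analogously $K \cdot \pi^{-1}(L) \in \cc$ for $RL$; one then reads off that $\pi^{-1}(LR), \pi^{-1}(RL) \in \cc$, mirroring the two-pronged organization of the proof of Lemma~\ref{Lem:closed-under-intersection-2.1}.

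The key technical step is the concatenation-with-regular closure for cones, which is standard in AFL theory but must be articulated carefully in terms of the three cone axioms. Image computations like $\pi(L' K) = LR$ are then immediate from surjectivity and the homomorphism property of $\pi$. The main obstacle lies in the $\cc^\forall$ half: although the containment $\pi^{-1}(L) \cdot K \subseteq \pi^{-1}(LR)$ is automatic, the reverse containment can fail for non-free $M$ (a factorization of $\pi(w) = \ell r$ in $M$ need not lift to any factorization of $w$ in $A^*$), and so the passage from the explicit $\cc$-witness $\pi^{-1}(L) \cdot K$ to the full preimage $\pi^{-1}(LR)$ requires carrying out the saturation under the kernel congruence of $\pi$ inside $\cc$, which is the delicate part of the argument.
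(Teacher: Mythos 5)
Your marker-trick verification that cones are closed under concatenation with regular languages is correct, and your treatment of the $\cc^\exists$ half is complete: $L'K \in \cc$ and $\pi(L'K) = \pi(L')\pi(K) = LR$ is all that is needed there. (The paper gives no proof of this lemma, only a citation to Herbst, so there is no line-by-line comparison to make.) The genuine gap is in the $\cc^\forall$ half, and your own closing paragraph concedes it without filling it: the witness $\pi^{-1}(L)\cdot K \in \cc$ has image $LR$, so it only re-proves $LR \in \cc^\exists(M)$; to get $LR \in \cc^\forall(M)$ you must exhibit $\pi^{-1}(LR)$ itself in $\cc$, and ``saturation under the kernel congruence of $\pi$'' is not an operation any cone performs --- if it were, every rational subset of every monoid would be recognizable. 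Worse, the $\forall$ statement is false at the level of generality you are working in: take $M = \N^2 = \langle a,b \rangle$, $L = \{(0,0)\} \in \REG^\forall(M)$ and $R = \pi\bigl((ab)^*\bigr) \in \REG^\exists(M)$; then $LR = R$ has full preimage $\{w : |w|_a = |w|_b\}$, which is not regular, so $LR \notin \REG^\forall(M)$ even though $\REG$ is a cone. No amount of care will therefore carry out the deferred saturation step for arbitrary monoids.

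What rescues the $\forall$ half --- and what Herbst's Lemma~4.1, stated for groups, actually uses --- is inversion. For a group $G$ generated by $A \cup A^{-1}$, write $\overline{v}$ for the formal inverse of a word $v$; then $\pi(w) \in LR$ if and only if there is $v \in K$ with $\pi(w)\pi(v)^{-1} \in L$, i.e.\ with $w\overline{v} \in \pi^{-1}(L)$. Hence $\pi^{-1}(LR)$ is exactly the right quotient of $\pi^{-1}(L)$ by the regular language $\overline{K} = \{\overline{v} : v \in K\}$, and every cone is closed under right (and left) quotient by regular languages --- provable by the very marker construction you already wrote down, with the erasing homomorphism applied to the primed block instead of the unprimed one. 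The case of $RL$ is symmetric via left quotients. You should either restrict the $\forall$ half to groups and run this quotient argument, or record that for general monoids only the $\exists$ half of the lemma survives.
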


In particular, for any cone $\cc$ and finitely generated group $G$, we see that if $G$ has word problem in $\cc$, then $\REG^\exists(G) \subseteq \cc^\forall(G)$, which thus ``flattens'' the square in the right-hand side of \eqref{Eq:squares-of-inclusions}.

\begin{lemma}[{Herbst \cite[Prop.~5.5(a)]{Herbst1991}}]\label{Lem:HerbstUniversal-subsets-preserved-by-finite-index}
Let $\cc$ be a cone, $G$ be a finitely generated group, and let $N \trianglelefteq_{\mathrm{f.i.}} G$ be a normal subgroup of finite index in $G$. Then:
\begin{enumerate}
\item  $\cc^\forall(N) \subseteq \cc^\forall(G)$. 
\item $\cc^\exists(G) \cap 2^N \subseteq \cc^\exists(N)$. 
\end{enumerate}
\end{lemma}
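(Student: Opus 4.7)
\noindent The plan is to realise the standard Reidemeister--Schreier coset rewriting as a finite transducer converting $A$-words representing elements of $N$ into $B$-words for a finite generating set $B$ of $N$, and then to deduce both containments from the closure of cones under rational transductions.

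Fix a finite monoid generating set $A$ of $G$ with surjection $\pi \colon A^\ast \to G$. Since $[G:N]<\infty$, the subgroup $N$ is itself finitely generated; pick a finite set $T = \{t_1, \ldots, t_k\} \subset A^\ast$ of right-coset representatives for $N$ in $G$ with $t_1 \equiv 1$, and a finite generating set $B$ of $N$ large enough that, for every $a \in A$ and $t \in T$, it contains a word $w(t,a) \in B^\ast$ representing the element $n(t,a) := ta(t')^{-1} \in N$, where $t' \in T$ is the representative of the coset $Nta$. Write $\pi' \colon B^\ast \to N$ for the induced surjection. These choices assemble into a finite deterministic transducer $\mathcal{T}$ with state set $T$, initial and unique accepting state $t_1$, and a transition from $t$ to $t'$ on input $a$ that emits $w(t,a)$. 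A short induction on input length confirms the invariant that, at any moment in a computation, $\pi'(\text{output so far}) \cdot (\text{current state}) = \pi(\text{input read so far})$; in particular, $\mathcal{T}$ accepts an input $u \in A^\ast$ exactly when $u \in \pi^{-1}(N)$, and then its unique output $\mathcal{T}(u) \in B^\ast$ satisfies $\pi'(\mathcal{T}(u)) = \pi(u)$. Being a finite transducer, $\mathcal{T}$ realises a rational transduction, and by Nivat's theorem cones are exactly the language families closed under rational transductions, hence also under their inverses.

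For (1), suppose $X \in \cc^\forall(N)$ and set $L := (\pi')^{-1}(X) \in \cc$. Because $X \subseteq N$, the transducer invariant yields $\pi^{-1}(X) = \mathcal{T}^{-1}(L)$, so closure of $\cc$ under inverse rational transductions gives $\pi^{-1}(X) \in \cc$, i.e.\ $X \in \cc^\forall(G)$.

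For (2), suppose $X \in \cc^\exists(G)$ with $X \subseteq N$, and pick $L \in \cc$ with $\pi(L) = X$. The language $\pi^{-1}(N)$ is regular because $N$ has finite index, so $L \cap \pi^{-1}(N) \in \cc$ by closure under intersection with regular languages. Pushing through the transducer gives $L' := \mathcal{T}(L \cap \pi^{-1}(N)) \in \cc$, and by construction $\pi'(L') = \pi(L \cap \pi^{-1}(N)) = \pi(L) \cap N = X$, so $X \in \cc^\exists(N)$. The principal delicate point throughout is the clean setup of the Schreier transducer and the observation that it realises a rational transduction; once this is in hand, both containments follow mechanically from the cone axioms.
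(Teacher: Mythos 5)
The paper does not prove this lemma itself --- it is quoted verbatim from Herbst \cite[Prop.~5.5(a)]{Herbst1991} --- so there is no in-paper argument to match against; judged on its own terms, your proof is correct. The Schreier coset transducer is set up properly, the invariant $\pi'(\text{output})\cdot(\text{state})=\pi(\text{input})$ is verified by the right induction, and both containments then do follow from Nivat's theorem: $\pi^{-1}(X)=\mathcal{T}^{-1}\bigl((\pi')^{-1}(X)\bigr)$ for (1), and $\pi'\bigl(\mathcal{T}(L\cap\pi^{-1}(N))\bigr)=\pi(L)\cap N=X$ for (2). Two remarks. First, a small infelicity: asking that $B$ be ``large enough that it contains a word $w(t,a)\in B^\ast$'' is redundant --- any finite monoid generating set of $N$ admits such words since $\pi'$ is surjective (or one may simply take $B$ to be the Schreier generators $n(t,a)$ themselves). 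Second, and more interestingly, your argument never uses normality of $N$: the right-coset transducer works verbatim for an arbitrary finite-index subgroup. You have therefore actually proved the stronger statement that the paper records as Theorem~\ref{Thm:Equalities-for-finite-index-subgroups-Thm2.8}, and you have done so for any cone, whereas the paper obtains that theorem only for full semi-$\AFL$s by reducing to the normal case and invoking closure under union. This is precisely the ``coset-labelled words'' strengthening the paper attributes to Bodart in the remark following Corollary~\ref{Cor:Answer-Collin-Question}, so your route is not only valid but buys a genuinely more general result than the lemma as stated.
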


Having covered some of the basic properties of $\cc^\bullet$-subsets, most of which can be found already in \cite{Herbst1991}, this last lemma now leads us towards introducing two new concepts: relative $\cc^\bullet$\textit{-subsets}, which will then lead us naturally to $\cc^\bullet$\textit{-flatness}.

\subsection{Relative $\cc^\bullet$-subsets}\label{Subsec:relative-subsets}

Let, as before, $\cc$ be a class of languages, and let $M$ be a finitely generated monoid, with finite generating set $A$ and $\pi \colon A^\ast \to M$ surjective. Let $N \leq M$ be a finitely generated submonoid. Then $K \subseteq N$ is said to be a \textit{$N$-relative $\cc^\bullet$-subset} of $M$ if $K \in \cc^\bullet(M)$.  We denote the set of $N$-relative $\cc^\bullet$-subsets of $M$ by $\cc^\bullet(M \mid N)$. That is, we simply have
\begin{align}
\cc^\exists(M \mid N) &= \cc^\exists(M) \cap 2^N, \label{Def:existsrelative} \\
\cc^\forall(M \mid N) &= \cc^\forall(M) \cap 2^N. \label{Def:forallrelative}
\end{align}
The obvious question that now arises in this context is the following: \textit{what is the relationship between the two sets $\cc^\bullet(M \mid N)$ and $\cc^\bullet(N)$}? Here, we already have a distinction between the cases of $\cc^\forall$-subsets and $\cc^\exists$-subsets. We begin with a useful lemma in this line, which generalizes a result due to Herbst \cite[Corollary~4.4]{Herbst1991}.

\begin{lemma}\label{Lem:ccForall-Preserved-By-Submonoids2.4}
Let $\cc$ be a class of languages closed under inverse homomorphism. Let $M$ be a finitely generated monoid, $N \leqfg M$ a finitely generated submonoid. Let $X \subseteq M$. Then: 
\[
X \in \cc^\forall(M) \implies X \cap N \in \cc^\forall(N).
\]
\end{lemma}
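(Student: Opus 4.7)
The plan is to reduce directly to Herbst's diagram-chase lemma (Lemma~\ref{Lem:Herbst-lemma-2.3}), taking the auxiliary homomorphism $\tau$ there to be the inclusion $N \hookrightarrow M$. Concretely, let $A$ be the given finite generating set for $M$ with surjection $\varphi \colon A^\ast \to M$, and, since $N$ is finitely generated, fix a finite generating set $B$ for $N$ with associated surjection $\varphi' \colon B^\ast \to N$. The inclusion $\tau \colon N \to M$ is a monoid homomorphism, so Lemma~\ref{Lem:Herbst-lemma-2.3} applies with $M' = N$, $A' = B$, and the arbitrary subset $X \subseteq M$ from our hypothesis.

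Under this choice, the subset $X' := \tau^{-1}(X)$ appearing in Lemma~\ref{Lem:Herbst-lemma-2.3} is exactly $X \cap N$, so the lemma produces a homomorphism $h \colon B^\ast \to A^\ast$ with
\[
(\varphi')^{-1}(X \cap N) \;=\; h^{-1}\bigl(\varphi^{-1}(X)\bigr).
\]
By hypothesis $X \in \cc^\forall(M)$, which means $\varphi^{-1}(X) \in \cc$. Since $\cc$ is closed under inverse homomorphisms, the right-hand side lies in $\cc$, hence so does $(\varphi')^{-1}(X \cap N)$; that is, $X \cap N \in \cc^\forall(N)$ with respect to the generating set $B$. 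As already noted in the discussion after Lemma~\ref{Lem:Herbst-lemma-2.3}, for classes closed under inverse homomorphism the property of being a $\cc^\forall$-subset is independent of the chosen finite generating set, so this suffices.

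There is essentially no obstacle beyond invoking the right result: the entire content sits inside Herbst's lemma, and once the inclusion is fed in as $\tau$, everything reduces to unwinding the definition of $\cc^\forall$ and applying inverse-homomorphism closure once. The only thing to double-check is that $X$ is permitted to be an \emph{arbitrary} subset of $M$ in Lemma~\ref{Lem:Herbst-lemma-2.3}, which the statement explicitly grants; no additional language-theoretic assumption on $X$ itself is needed beyond $X \in \cc^\forall(M)$.
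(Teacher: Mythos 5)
Your proof is correct and follows essentially the same route as the paper's: both apply Lemma~\ref{Lem:Herbst-lemma-2.3} with $\tau$ taken to be the inclusion $N \hookrightarrow M$, identify $\tau^{-1}(X)$ with $X \cap N$, and conclude by closure of $\cc$ under inverse homomorphism. The only cosmetic difference is your explicit remark on independence of the generating set, which the paper handles in the surrounding discussion.
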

\begin{proof}
Suppose $X \in \cc^\forall(M)$. Fix finite generating sets $A$ and $A'$ of $M$ and $N$, respectively, with surjective homomorphisms $\varphi \colon A^\ast \to M$ and $\varphi' \colon (A')^\ast \to N$. Consider the diagram in Lemma~\ref{Lem:Herbst-lemma-2.3}, taking $\tau \colon N \to M$ to be the inclusion mapping, and hence $X \cap N = \tau^{-1}(X)$. The lemma states that the diagram commutes and that $(\varphi')^{-1}(X \cap N) = h^{-1}(\varphi^{-1}(X))$. Since $X \in \cc^\forall(M)$, we have $\varphi^{-1}(X) \in \cc$. Now $\cc$ is closed under inverse homomorphism, so $h^{-1}(\varphi^{-1}(X)) \in \cc$, and hence the full pre-image of $X \cap N$ under $\varphi'$ is in $\cc$. That is, $X \cap N \in \cc^\forall(N)$. 
\end{proof}

We are now ready to show the first difference between relative $\cc^\forall$-subsets and relative $\cc^\exists$-subsets, in the form of the following lemma:

\begin{lemma}\label{Lem:Elementary-passing-to-submonoid-2.5+2.6}
Let $M$ be a finitely generated monoid, and let $N \leqfg M$ be a finitely generated submonoid. Let $\cc$ be a class of languages closed under inverse homomorphism. Then:
\begin{enumerate}
\item $\cc^\forall(M \mid N) \subseteq \cc^\forall(N)$, and
\item $\cc^\exists(M \mid N) \supseteq \cc^\exists(N)$.
\end{enumerate}
\end{lemma}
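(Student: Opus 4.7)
The plan is to dispatch (1) as an immediate corollary of the preceding Lemma~\ref{Lem:ccForall-Preserved-By-Submonoids2.4}, and to establish (2) by a change-of-generating-set argument based on Lemma~\ref{Lem:Herbst-lemma-2.3}.

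For (1), I take $X \in \cc^\forall(M \mid N)$; by \eqref{Def:forallrelative} this just unpacks to $X \in \cc^\forall(M)$ together with $X \subseteq N$. Lemma~\ref{Lem:ccForall-Preserved-By-Submonoids2.4} applied to this $X$ delivers $X \cap N \in \cc^\forall(N)$, and since $X \subseteq N$ forces $X \cap N = X$, we conclude $X \in \cc^\forall(N)$, as desired.

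For (2), I take $X \in \cc^\exists(N)$, witnessed by some $L \in \cc$ with $L \subseteq (A')^\ast$ and $\varphi'(L) = X$; trivially $X \subseteq N$, so the task is to place $X$ in $\cc^\exists(M)$. A first instinct is to invoke Lemma~\ref{Lem:Herbst-lemma-2.3} with $\tau$ the inclusion $\iota \colon N \hookrightarrow M$ to obtain a homomorphism $h \colon (A')^\ast \to A^\ast$ satisfying $\pi \circ h = \iota \circ \varphi'$; then $\pi(h(L)) = X$, so one would be finished if $h(L) \in \cc$. This, however, requires closure of $\cc$ under homomorphism, not just under inverse homomorphism. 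The finesse is instead to work with the enlarged alphabet $B = A \sqcup A'$ (disjoint union) together with the natural extended surjection $\pi_B \colon B^\ast \to M$ given by $\pi_B|_{A^\ast} = \pi$ and $\pi_B(a') = \varphi'(a') \in N \subseteq M$ for $a' \in A'$; regarded as a language over this larger alphabet, $L$ still lies in $\cc$ and satisfies $\pi_B(L) = X$, exhibiting $X$ as a $\cc^\exists$-subset of $M$ with respect to the generating set $B$. The generating set independence of $\cc^\exists$-subsets for classes closed under inverse homomorphism (noted immediately after Lemma~\ref{Lem:Herbst-lemma-2.3}) then delivers $X \in \cc^\exists(M)$ with respect to the original generating set $A$, and in conjunction with $X \subseteq N$ this is precisely $X \in \cc^\exists(M \mid N)$.

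The only non-trivial piece is (2), whose main obstacle is exactly the observation above: inverse homomorphism closure alone does not allow one to translate the witness $L$ directly into a language in $\cc$ over $A^\ast$, so the detour through a larger generating set, followed by an appeal to generating set invariance, is essential. Part (1) is then just an unwinding of definitions once Lemma~\ref{Lem:ccForall-Preserved-By-Submonoids2.4} is in hand; indeed the asymmetry of the two inclusions in the lemma is already foreshadowed here, since the direction of Lemma~\ref{Lem:ccForall-Preserved-By-Submonoids2.4} naturally flows from $M$ down to $N$, whereas for $\cc^\exists$-subsets the natural flow goes the other way.
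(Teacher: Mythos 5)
Your proof is correct and follows essentially the same route as the paper's: part (1) is, as you note, just Lemma~\ref{Lem:ccForall-Preserved-By-Submonoids2.4} restated via the definition \eqref{Def:forallrelative}, and part (2) is handled by a change of generating set justified by Lemma~\ref{Lem:Herbst-lemma-2.3}. The only cosmetic difference in (2) is that the paper realizes the generators of $N$ as a finite set $C \subseteq A^\ast$ of words and reads the resulting witness $L \subseteq C^\ast$ directly as a language over $A$, whereas you adjoin $A'$ to $A$ and then invoke generating-set independence of $\cc^\exists(M)$ to return to the original alphabet; both variants rest on the same underlying transfer principle.
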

\begin{proof}
Note that (1) is just a restatement of Lemma~\ref{Lem:ccForall-Preserved-By-Submonoids2.4}, in view of the definition of a relative $\cc^\forall$-subset. Thus, we only have to prove (2). Fix arbitrary finite generating sets $A, B$ for $M$ resp.\ $N$, and let $\pi_M \colon A^\ast \to M$ resp.\ $\pi_N \colon B^\ast \to N$ be the associated surjective homomorphisms. Let $X \in \cc^\exists(N)$. Since $N$ is a finitely generated submonoid of $M$, there is some finite set $C \subseteq A^\ast$ of words such that $C$ generates $N$ as a submonoid of $N$. Let $\pi_C \colon C^\ast \to N$ be the associated surjective homomorphism, which thus factors through $\pi_M \colon A^\ast \to N$ via the inclusion $C \subseteq A^\ast$. As mentioned immediately before Lemma~\ref{Lem:Herbst-lemma-2.3}, it follows directly from this lemma that, since $\cc$ is closed under inverse homomorphism, that $X$ is also a $\cc^\exists$-subset of $N$ with respect to the generating set $C$. Hence there is some $L \subseteq C^\ast$ with $L \in \cc$ and $\pi_C(L) = X$. Since $\pi_C$ factors through $\pi_M$ via the inclusion $C \subseteq A^\ast$, we also have $\pi_M(L) = X$. Thus $X$ is a $\cc^\exists$-subset of $M$ entirely contained inside $N$, i.e.\ $X \in \cc^\exists(M \mid N)$ as desired. 
\end{proof}
% \color{red} the way it's written is probably nicer (and it is essentially the same), but, to prove (2), we could also prove it by taking $\pi_2:(A\cup B)^*\to M$ and take the same language, that is, there is  a $\cc$-language $L\subseteq B^*$ such that $L\pi =X$, so there is a $\cc$-language $L\subseteq (A\cup B)^*$ such that $L\pi=X$ and then use invariance of $\cc^\exists(M)$ under change of generators.\color{black}

In general, it is clear from the examples given in Example~\ref{Examples} that the inclusions in Lemma~\ref{Lem:Elementary-passing-to-submonoid-2.5+2.6} cannot be reversed. The bulk of this present article consists, indirectly or directly, of investigations into when the inclusions \textit{can} be promoted to equalities. For the remainder of this article, we will investigate this question from two points of view. First, we will investigate for which submonoids in Lemma~\ref{Lem:Elementary-passing-to-submonoid-2.5+2.6} the desired equalities hold. Next, we will introduce the notion of a $\cc^\bullet$-\textit{flat} monoid, being those monoids for which the equalities hold for \textit{all} finitely generated submonoids. We will describe some elementary properties of $\cc^\bullet$-flat monoids and groups in \S\ref{Subsec:flatness}, and in \S\ref{Sec:3flatmonoidsgeneral} we will investigate when $\cc^\bullet$-flatness is preserved under taking finite extensions and passing to finite index subgroups. In \S\ref{Sec:4flatgroupsforspecific}, we will discuss specific classes $\cc$, and investigate the classification of $\cc^\forall$-flat groups. 

Before discussing flatness, however, we will show some general properties of relative $\cc^\bullet$-subsets. We begin with a simple result.

\begin{proposition}\label{Prop:monoid-finite}
Let $\cc$ be a class of languages closed under inverse homomorphism. Let $M$ be a finitely generated monoid, and let $T \leq M$ be such that $M \setminus T$ is an ideal of $M$, and such that $|M \setminus T| < \infty$. Then  $\cc^\bullet(M \mid T) = \cc^\bullet(T)$.

\end{proposition}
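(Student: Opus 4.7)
The plan is to prove both inclusions $\cc^\bullet(T) \subseteq \cc^\bullet(M \mid T)$ and $\cc^\bullet(M \mid T) \subseteq \cc^\bullet(T)$, noting that by Lemma~\ref{Lem:Elementary-passing-to-submonoid-2.5+2.6} one half is already known in each case (namely $\cc^\forall(M \mid T) \subseteq \cc^\forall(T)$ and $\cc^\exists(T) \subseteq \cc^\exists(M \mid T)$). The key move is to pick a finite generating set for $T$ that is compatible with a given finite generating set $A$ of $M$, by exploiting the ideal hypothesis on $F := M \setminus T$.

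Let $\pi_M \colon A^\ast \to M$ be the associated surjection. The central observation is: if $w = a_1 \cdots a_\ell \in A^\ast$ satisfies $\pi_M(w) \in T$, then each letter must also satisfy $\pi_M(a_i) \in T$. Indeed, if some $\pi_M(a_i)$ lay in the ideal $F$, then the product $\pi_M(w)$ would factor through $F$ and hence itself lie in $F$, a contradiction. Setting $B := A \cap \pi_M^{-1}(T)$, this observation gives the clean identity $\pi_M^{-1}(T) = B^\ast$. In particular, $T$ is generated by $\pi_M(B)$ and hence is finitely generated, and we may take the surjection $\pi_T \colon B^\ast \to T$ to be simply the restriction $\pi_M|_{B^\ast}$.

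With this compatible choice of generating set, both equalities become immediate. For $\cc^\forall$: for any $X \subseteq T$, one has $\pi_T^{-1}(X) = \pi_M^{-1}(X) \cap B^\ast = \pi_M^{-1}(X)$, where the second equality holds because $X \subseteq T$ forces $\pi_M^{-1}(X) \subseteq \pi_M^{-1}(T) = B^\ast$. Hence $X \in \cc^\forall(T)$ iff $X \in \cc^\forall(M)$. For $\cc^\exists$: any $L \in \cc$ with $\pi_M(L) = X \subseteq T$ must itself lie inside $\pi_M^{-1}(T) = B^\ast$, and so $L$ simultaneously witnesses $X$ as a $\cc^\exists$-subset of $T$ with respect to $B$. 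Closure of $\cc$ under inverse homomorphism, via Lemma~\ref{Lem:Herbst-lemma-2.3}, then guarantees that $\cc^\bullet(T)$ does not depend on which finite generating set of $T$ one uses, so computing it via $B$ is legitimate.

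The only real obstacle is spotting the correct generating set $B$ and proving $\pi_M^{-1}(T) = B^\ast$; this is precisely where the ideal hypothesis on $F$ is indispensable, since it is what prevents a word from transiently leaving $T$ and returning. Without the ideal assumption (merely requiring $T$ to be a cofinite submonoid) the letterwise containment can fail, and the proof would need to route through genuinely more complicated language-theoretic manipulations. Once the identity $\pi_M^{-1}(T) = B^\ast$ is in hand, however, both halves of the statement collapse with no further machinery.
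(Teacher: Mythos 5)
Your proof is correct and follows essentially the same route as the paper's: both arguments hinge on producing compatible generating sets for $M$ and $T$ so that, by the ideal hypothesis, the full preimage of $T$ is exactly the free monoid on the sub-alphabet generating $T$, after which both inclusions (beyond the halves already given by Lemma~\ref{Lem:Elementary-passing-to-submonoid-2.5+2.6}) collapse immediately. The only difference is cosmetic: the paper starts from a finite generating set of $T$ (whose existence it gets from Jura's theorem on large submonoids) and enlarges it to one for $M$, whereas you start from a generating set of $M$ and cut it down to $B = A \cap \pi_M^{-1}(T)$, which has the small bonus of yielding finite generation of $T$ directly from the ideal hypothesis without the external citation.
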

\begin{proof}
Since $T$ is a large submonoid of $M$, there exists by \cite{Jura1978}, cf.\ also \cite[Theorem~1.1]{Ruskuc1998}, a finite generating set $B$ for $T$. Let $\pi_T \colon B^\ast \to T$ be the associated surjective homomorphism. Then $C = B \cup (M \setminus T)$ is a finite generating set for $M$, and there is a surjective homomorphism $\pi_M \colon C^\ast \to M$. Furthermore, since $M \setminus T$ is an ideal of $M$, we have that a word $w \in C^\ast$ represents an element of $T$ if and only if $w \in B^\ast$. Consequently, $\pi_M |_{B^\ast} = \pi_T$. 

We begin by proving the case of $\cc^\exists$-subsets. If $X \in \cc^\exists(M \mid T)$, then since $\cc$ is closed under inverse homomorphism there is some $L \subseteq B^\ast$ with $\pi_M(L) = X$. However, since $L \in B^\ast$, we have $\pi_T(L)= \pi_M(L) = X$, and hence $X \in \cc^\exists(T)$. The reverse inclusion is Lemma~\ref{Lem:Elementary-passing-to-submonoid-2.5+2.6}(2). Next, for $\cc^\forall$-subsets, we have that  $\cc^\forall(M \mid T) \subseteq \cc^\forall(T)$ by Lemma~\ref{Lem:Elementary-passing-to-submonoid-2.5+2.6}(1) and if $X \in \cc^\forall (T)$, then $\pi_C^{-1}(X)=\pi_T^{-1}(X)\in \cc$ and $X$ is entirely contained in $T$, thus belonging to $\cc^\forall(M \mid T)$.
\end{proof}

Recall that if $M$ is any monoid, then $M^0$ denotes the result of adjoining a zero element to $M$. Since $\{ 0 \}$ is an ideal of $M^0$, the following is true: 

\begin{corollary}\label{Cor:Preserved-by-adjoining-zero}
If $M$ is a finitely generated monoid, then $\cc^\bullet(M^0 \mid M) = \cc^\bullet(M)$. 
\end{corollary}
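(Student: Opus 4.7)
The proof is a direct application of Proposition~\ref{Prop:monoid-finite}. My plan is to verify the three hypotheses of that proposition with the ambient monoid taken to be $M^0$ and the distinguished submonoid taken to be $T = M$.

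First, I would observe that $M$ is genuinely a submonoid of $M^0$: the identity of $M^0$ is precisely the identity of $M$, and $M$ is closed under the multiplication inherited from $M^0$ (since the product of two non-zero elements in $M^0$ lies in $M$). Moreover, $M$ is finitely generated by assumption, so $M \leqfg M^0$.

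Next, I would verify that $M^0 \setminus M = \{0\}$ is an ideal of $M^0$. This is immediate from the defining property of the adjoined zero: for any $x \in M^0$, we have $0 \cdot x = x \cdot 0 = 0 \in \{0\}$. Finally, $|M^0 \setminus M| = 1$ is finite.

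With these observations in hand, Proposition~\ref{Prop:monoid-finite} applied to $M^0$ and $T = M$ yields $\cc^\bullet(M^0 \mid M) = \cc^\bullet(M)$ as claimed. There is no genuine obstacle here; the entire content of the corollary is the combinatorial observation that $\{0\}$ is an ideal of $M^0$ of cardinality one, which makes Proposition~\ref{Prop:monoid-finite} directly applicable.
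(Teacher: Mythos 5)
Your proposal is correct and is exactly the paper's argument: the corollary is stated as an immediate consequence of Proposition~\ref{Prop:monoid-finite}, justified by the observation that $\{0\}$ is a (finite) ideal of $M^0$ with $M$ the complementary finitely generated submonoid. The only cosmetic point is that, as in Proposition~\ref{Prop:monoid-finite} itself, one should carry along the standing hypothesis that $\cc$ is closed under inverse homomorphism.
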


We now prove some of the group-theoretic theorems of this article, which gives some conditions on the subgroup $H$ which guarantee that the inclusions in Lemma~\ref{Lem:Elementary-passing-to-submonoid-2.5+2.6} can be promoted to equalities. The first is a generalization of Herbst's Lemma~\ref{Lem:HerbstUniversal-subsets-preserved-by-finite-index} from normal subgroups of finite index to arbitrary subgroups of finite index.

\begin{theorem}\label{Thm:Equalities-for-finite-index-subgroups-Thm2.8}
Let $G$ be a finitely generated group, and let $\cc$ be a full semi-$\AFL$. Then for all finite index subgroups $H \leqfi G$, we have: 
\begin{enumerate}[label=(\alph*)]
\item $\cc^\forall(G \mid H) = \cc^\forall(H)$, and \label{Thm2.8-a}
\item $\cc^\exists(G \mid H) = \cc^\exists(H)$\label{Thm2.8-b}
\end{enumerate}
\end{theorem}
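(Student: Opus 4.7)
By Lemma~\ref{Lem:Elementary-passing-to-submonoid-2.5+2.6} applied with $N = H$, one inclusion in each of (a) and (b) is immediate; what remains is to prove $\cc^\forall(H) \subseteq \cc^\forall(G \mid H)$ and $\cc^\exists(G \mid H) \subseteq \cc^\exists(H)$. Fix a finite generating set $A$ of $G$ with surjection $\pi_G \colon A^\ast \to G$, and set $R := \pi_G^{-1}(H)$; since $[G : H] < \infty$, Theorem~\ref{Thm:Anisimov-Seifert}\,(2) yields $R \in \REG$. The strategy is to extend Herbst's Lemma~\ref{Lem:HerbstUniversal-subsets-preserved-by-finite-index} from normal subgroups to arbitrary finite-index subgroups by constructing a single Schreier-type rewriting and expressing it through the closure operations of a full semi-$\AFL$.

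\textbf{Schreier rewriting.} Choose a right transversal $1 = t_1, \ldots, t_k$ so that $G = \bigsqcup_{i=1}^k H t_i$, and for each $a \in A$ let $\sigma_a$ be the permutation of $\{1, \ldots, k\}$ determined by $H t_i a = H t_{\sigma_a(i)}$. The Schreier generators $b_{i, a} := t_i\, a\, t_{\sigma_a(i)}^{-1} \in H$ form a finite set $B$ that generates $H$; by Lemma~\ref{Lem:Herbst-lemma-2.3} together with closure of $\cc$ under inverse homomorphism, $\cc^\bullet(H)$ is independent of the generating set, so we fix $B$ and write $\pi_H \colon B^\ast \to H$. Define
\[
\phi \colon R \to B^\ast, \qquad \phi(a_1 \cdots a_n) := b_{j_0, a_1}\, b_{j_1, a_2}\, \cdots\, b_{j_{n-1}, a_n}, \;\; j_0 := 1,\;\; j_m := \sigma_{a_m}(j_{m-1}).
\]
A word $a_1 \cdots a_n$ lies in $R$ precisely when $j_n = 1$, and in that case a direct telescoping calculation gives $\pi_H(\phi(w)) = \pi_G(w)$.

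\textbf{Closure argument and conclusion.} Let $C = A \sqcup B$ with erasing homomorphisms $\psi \colon C^\ast \to A^\ast$ and $\psi' \colon C^\ast \to B^\ast$ deleting $B$ resp.\ $A$, and let $R_\phi \subseteq C^\ast$ be the regular language of interleavings $a_1 b_1 a_2 b_2 \cdots a_n b_n$ satisfying $b_m = b_{j_{m-1}, a_m}$ and $j_0 = j_n = 1$; this is accepted by a finite DFA over $C$ whose states track the current coset and the $A$/$B$ phase. The unique-interleaving bijection $R \leftrightarrow R_\phi$ then yields the identities
\[
\phi(L) = \psi'\bigl(\psi^{-1}(L) \cap R_\phi\bigr) \text{ for } L \subseteq R, \qquad \phi^{-1}(L') = \psi\bigl(\psi'^{-1}(L') \cap R_\phi\bigr) \text{ for } L' \subseteq B^\ast.
\]
Both preserve $\cc$-membership, via inverse homomorphism, intersection with the regular $R_\phi$, and arbitrary (possibly erasing) homomorphism -- all standard closures of a full semi-$\AFL$. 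For (a): if $X \in \cc^\forall(H)$ then $\pi_G^{-1}(X) = \phi^{-1}(\pi_H^{-1}(X)) \in \cc$, so $X \in \cc^\forall(G \mid H)$. For (b): given $X \in \cc^\exists(G \mid H)$ with witness $L \in \cc$ and $\pi_G(L) = X$, note that $L \subseteq R$ since $X \subseteq H$, whence $\phi(L) \in \cc$ and $\pi_H(\phi(L)) = \pi_G(L) = X$, so $X \in \cc^\exists(H)$. The principal technical obstacle is establishing $\pi_H \circ \phi = \pi_G|_R$, which is precisely what forces the choice of the Schreier generators as the generating set of $H$; once this compatibility is in hand, the rest is a routine unpacking of the cone closures built into the definition of a full semi-$\AFL$.
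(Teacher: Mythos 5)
Your proof is correct, but it takes a genuinely different route from the paper's. The paper first passes to a normal subgroup $F \trianglelefteq_{\mathrm{f.i.}} G$ contained in $H$, decomposes any $K \subseteq H$ as a finite union of pieces $K \cap Fb_i$ over a transversal of $F$ in $H$, translates each piece into $F$, and then invokes Herbst's Lemma~\ref{Lem:HerbstUniversal-subsets-preserved-by-finite-index} for the normal finite-index case as a black box; this argument genuinely uses closure under union to reassemble the pieces. You instead bypass the normal core entirely and build a single Schreier rewriting $\phi \colon R \to B^\ast$ over the coset automaton, realized as $\psi'(\psi^{-1}(\cdot) \cap R_\phi)$ and $\psi(\psi'^{-1}(\cdot) \cap R_\phi)$ -- so only the cone operations (inverse homomorphism, intersection with the regular language $R_\phi$, and an erasing homomorphism) are needed, and closure under union plays no role. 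The key identity $\pi_H \circ \phi = \pi_G|_R$ is the standard telescoping computation, and your verification that $R_\phi$ is accepted by a DFA tracking the coset index and the $A$/$B$ phase is sound; one small point worth making explicit is that $B$ generates $H$ as a \emph{monoid} (every $h \in H$ has a positive $A$-word representative $w$, and $\phi(w)$ is a positive $B$-word for $h$), so $\pi_H \colon B^\ast \to H$ really is surjective. Your approach is essentially the ``coset-labelled words'' argument that the paper attributes to Bodart in a remark following Corollary~\ref{Cor:Answer-Collin-Question}, where it is noted that such an argument extends part~\ref{Thm2.8-b} beyond full semi-$\AFL$s; as written your homomorphisms $\psi, \psi'$ are erasing, so your proof covers all cones, which is more general than the paper's proof but would need a non-erasing encoding to reach arbitrary trios.
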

\begin{proof}
Since $H \leqfi G$, there exists $F \leq H$ such that $F \trianglelefteq_{\mathrm{f.i.}} G$, and hence also $F \trianglelefteq_{\mathrm{f.i.}} H$. Let $b_1, \dots, b_n \in H$ be a transversal for $F$ in $H$, such that $H$ is decomposed as a union of disjoint right cosets $H = Fb_1 \cup \cdots \cup Fb_n$. 

We begin with \ref{Thm2.8-a}. By Lemma~\ref{Lem:Elementary-passing-to-submonoid-2.5+2.6}, it suffices to prove $\cc^\forall(H) \subseteq \cc^\forall(G \mid H)$. Let $K \in \cc^\forall(H)$ be arbitrary. We can write 
\begin{equation}\label{Eq:Thm2.8-union1}
K = K \cap H = \bigcup_{i=1}^n (F b_i \cap K).
\end{equation}
For ease of notation, let $K_i = F b_i \cap K$. Since $\cc$ is closed under union, being a full semi-$\AFL$, it suffices to prove that $K_i \in \cc$ for all $1 \leq i \leq n$. First, $K_i b_i^{-1} \subseteq F$, and since $F$ has finite index in $G$, it follows from Theorem~\ref{Thm:Anisimov-Seifert} that $F \in \REG^\forall(G)$. Hence $Fb_i \in \REG^\forall(G)$ by Lemma~\ref{Lem:HerbstLemma4.1}, and thus also $Fb_i \in \REG^\forall(H)$ by Lemma~\ref{Lem:Elementary-passing-to-submonoid-2.5+2.6}. Since $K \in \cc^\forall(H)$, it now follows from Lemma~\ref{Lem:closed-under-intersection-2.1} that $K_i = Fb_i \cap K \in \cc^\forall(H)$, and hence by another application of Lemma~\ref{Lem:HerbstLemma4.1} also $K_i b_i^{-1} \in \cc^\forall(H)$. Hence by Lemma~\ref{Lem:Elementary-passing-to-submonoid-2.5+2.6}, we have $K_i b_i^{-1} \in \cc^\forall(F)$, and hence since $F$ is normal and of finite index in $G$, by Lemma~\ref{Lem:HerbstUniversal-subsets-preserved-by-finite-index} we have $K_i b_i^{-1} \in \cc^\forall(G)$. By a final application of Lemma~\ref{Lem:HerbstLemma4.1}, we conclude $K_i \in \cc^\forall(G)$, as desired.

Next, we prove \ref{Thm2.8-b}. By Lemma~\ref{Lem:Elementary-passing-to-submonoid-2.5+2.6}, it suffices to prove $\cc^\exists(G \mid H) \subseteq \cc^\exists(H)$. Let $K \in \cc^\exists(G \mid H)$ be arbitrary, i.e.\ let $K \subseteq H$ be such that $K \in \cc^\exists(G)$. Decompose $K$ as in \eqref{Eq:Thm2.8-union1}, and note that it again suffices to prove $K_i \in \cc^\exists(H)$ for all $1 \leq i \leq n$. Arguing as in the case of $\cc^\forall$ above, several applications of Lemma~\ref{Lem:HerbstLemma4.1} and an application of the second half of Lemma~\ref{Lem:HerbstUniversal-subsets-preserved-by-finite-index} yields that $K_i \in \cc^\exists(H)$.
\end{proof}

As an application of Theorem~\ref{Thm:Equalities-for-finite-index-subgroups-Thm2.8} we resolve a question of Al Kohli, Bleak \& Elliott \cite{Kohli2025} regarding epi-$\cc$ groups. Let us recall their setting. For a class of languages $\cc$, they say that a group $G$ generated by a finite set $A$, with associated homomorphism $\pi \colon A^\ast \to G$, is \textit{epi-}$\cc$ if there is a language $L \in \cc$ such that $\pi(L) = G \setminus \{ 1 \}$. In the language of the present paper, this simply says that $G \setminus \{ 1 \} \in \cc^\exists(G)$. In \cite[Question~7.7]{Kohli2025}, the authors leave open the problem of whether the class of epi-$\cc$ groups is closed under passage to finite index subgroups. We are able to give a positive answer to this question for $\cc = \REG$ (a case already covered in \cite{Kohli2025}) as well as $\cc = \CF$ and $\cc = \RE$. More generally, we have the following:

\begin{corollary}\label{Cor:Answer-Collin-Question}
Let $\cc$ be any full semi-$\AFL$. Then the class of epi-$\cc$ groups is closed under passage to finite index subgroups.
\end{corollary}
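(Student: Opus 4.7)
The plan is to chain together the two main tools developed in the preceding subsection: closure of $\cc^\exists$-subsets under intersection with recognizable sets (Lemma~\ref{Lem:closed-under-intersection-2.1}), and the transfer of relative $\cc^\exists$-subsets to the subgroup (Theorem~\ref{Thm:Equalities-for-finite-index-subgroups-Thm2.8}\ref{Thm2.8-b}). Fix $G$ epi-$\cc$ and $H \leqfi G$; the goal is to show $H \setminus \{1\} \in \cc^\exists(H)$.

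First, I would observe that since $G$ is epi-$\cc$, we have $G \setminus \{1\} \in \cc^\exists(G)$ by definition. Second, since $H$ has finite index in $G$, Theorem~\ref{Thm:Anisimov-Seifert} gives $H \in \REG^\forall(G)$. Third, a full semi-$\AFL$ is in particular closed under intersection with regular languages, so Lemma~\ref{Lem:closed-under-intersection-2.1} applies and yields
\[
(G \setminus \{1\}) \cap H \;=\; H \setminus \{1\} \;\in\; \cc^\exists(G).
\]
Since the right-hand side is entirely contained in $H$, this says precisely that $H \setminus \{1\} \in \cc^\exists(G \mid H)$, using the notation of \eqref{Def:existsrelative}.

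Finally, I would invoke Theorem~\ref{Thm:Equalities-for-finite-index-subgroups-Thm2.8}\ref{Thm2.8-b}, which asserts the equality $\cc^\exists(G \mid H) = \cc^\exists(H)$ for any full semi-$\AFL$ $\cc$ and any finite index subgroup $H$. This immediately upgrades the previous line to $H \setminus \{1\} \in \cc^\exists(H)$, i.e.\ $H$ is epi-$\cc$. There is no real obstacle here, as all the heavy lifting has already been done in Theorem~\ref{Thm:Equalities-for-finite-index-subgroups-Thm2.8}; the only potential subtlety is checking that the hypotheses of Lemma~\ref{Lem:closed-under-intersection-2.1} are satisfied, which is automatic since a full semi-$\AFL$ is closed under intersection with regular languages.
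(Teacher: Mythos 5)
Your proposal is correct and follows exactly the same route as the paper's own proof: intersect $G \setminus \{1\}$ with $H \in \REG^\forall(G)$ via Lemma~\ref{Lem:closed-under-intersection-2.1} to land in $\cc^\exists(G \mid H)$, then apply Theorem~\ref{Thm:Equalities-for-finite-index-subgroups-Thm2.8}\ref{Thm2.8-b} to transfer to $\cc^\exists(H)$. No differences worth noting.
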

\begin{proof}
Let $G$ be any epi-$\cc$ group, and let $H \leqfi G$ be a subgroup of finite index. By definition, we have $G \setminus \{1 \} \in \cc^\exists(G)$, and since $H$ has finite index in $G$ we have $H \in \REG^\forall(G)$ by Theorem~\ref{Thm:Anisimov-Seifert}. Thus, by Lemma~\ref{Lem:closed-under-intersection-2.1}, we have 
\[
H \setminus \{1 \} = (G \setminus \{ 1 \}) \cap H \in \cc^\exists(G)
\]
and consequently $H \setminus \{1 \} \in \cc^\exists(G \mid H)$. Since we have $\cc^\exists(G \mid H) = \cc^\exists(H)$ by Theorem~\ref{Thm:Equalities-for-finite-index-subgroups-Thm2.8}, we conclude that $H \setminus \{ 1 \} \in \cc^\exists(H)$. This is precisely saying that $H$ is epi-$\cc$. Since $G$ was arbitrary, we are done. 
\end{proof}

In particular, the classes of epi-$\REG$, epi-$\CF$, resp.\ epi-$\RE$ groups are all closed under passage to finite index subgroups. Moving beyond the Chomsky hierarchy, Corollary~\ref{Cor:Answer-Collin-Question} also shows, for example, that the same is true of the class of epi-$\ETOL$ groups. However, note that the class $\CS$ of context-sensitive languages is not a full semi-$\AFL$ (as it is not a cone), so our methods do not give an answer in this case.

\begin{remark}
In a second version of the preprint \cite{Kohli2025}, appearing after the first version of this present article, the authors present an argument by C.\ Bodart showing that the class of epi-$\cc$ is closed under taking finite index subgroups if $\cc\in\{\REG, \CF, \CS,\RE\}$ using coset-labelled words. In fact, we remark that following that same proof, one can extend Theorem  \ref{Thm:Equalities-for-finite-index-subgroups-Thm2.8} b), and hence also Corollary  \ref{Cor:Answer-Collin-Question}, to any trio $\cc$. 
\end{remark}

Having extended Herbst's result to arbitrary finite index subgroups, we now turn to proving a structural result for $\cc^\bullet$-subsets of a given group in terms of the $\cc^\bullet$-subsets of a finite index subgroup of the same. Grunschlag \cite{Grunschlag} and Silva \cite{Silva} independently obtained the following result for the case of $\REG^\bullet$, and Carvalho \cite{Carvalho2023} did the same for $\CF^\bullet$. We provide a generalization of their results to all cones $\cc$ closed under union, i.e.\ to all full semi-$\AFL$s $\cc$. Precisely, we have the following:

\begin{theorem}\label{Thm:Structure-of-cc-subsets-2.9}
Let $\cc$ be a full semi-$\AFL$. Let $G$ be a finitely generated group, and let $H \leqfi G$. Let $b_1, \dots, b_n$ be a right transversal for $H$ in $G$. Then 
\begin{equation}\label{Eq:description-of-subsets}
\cc^\bullet(G) = \left\{ \bigcup_{i=1}^n L_i b_i \biggm| L_i \in \cc^\bullet(H) \text{ for all $1 \leq i \leq n$} \right\}.
\end{equation}
\end{theorem}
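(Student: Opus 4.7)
The strategy is to prove the two inclusions separately, using as essentially the only tools Theorem~\ref{Thm:Equalities-for-finite-index-subgroups-Thm2.8}, Lemma~\ref{Lem:HerbstLemma4.1}, and Lemma~\ref{Lem:closed-under-intersection-2.1}, together with the fact that a full semi-$\AFL$ is closed under finite unions. The decomposition to exploit is the (disjoint) partition of $G$ into the right cosets $Hb_1, \dots, Hb_n$ of $H$; all the work consists of showing that passing a $\cc^\bullet$-subset of $G$ through this partition, and then translating each piece by $b_i^{-1}$ into $H$, lands one precisely in $\cc^\bullet(H)$, and that the converse procedure goes back into $\cc^\bullet(G)$.

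For the inclusion $\supseteq$, suppose $L_i \in \cc^\bullet(H)$ for each $i$. By Theorem~\ref{Thm:Equalities-for-finite-index-subgroups-Thm2.8} we have $\cc^\bullet(H) = \cc^\bullet(G \mid H)$, so in particular $L_i \in \cc^\bullet(G)$ (regarded as a subset of $G$ contained in $H$). Since $\{b_i\} \in \REG^\exists(G)$, Lemma~\ref{Lem:HerbstLemma4.1} gives $L_i b_i \in \cc^\bullet(G)$. As $\cc$ is a full semi-$\AFL$, it is closed under finite unions, and hence $\bigcup_{i=1}^n L_i b_i \in \cc^\bullet(G)$.

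For the inclusion $\subseteq$, let $K \in \cc^\bullet(G)$ be arbitrary and set $K_i := K \cap Hb_i$. Since $[G:H] < \infty$, Theorem~\ref{Thm:Anisimov-Seifert} gives $H \in \REG^\forall(G)$, and then Lemma~\ref{Lem:HerbstLemma4.1} (with $R = \{b_i\} \in \REG^\exists(G)$) yields $Hb_i \in \REG^\forall(G)$. Lemma~\ref{Lem:closed-under-intersection-2.1} then gives $K_i \in \cc^\bullet(G)$, and another application of Lemma~\ref{Lem:HerbstLemma4.1} (now with $R = \{b_i^{-1}\}$) gives $L_i := K_i b_i^{-1} \in \cc^\bullet(G)$. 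By construction $L_i \subseteq H$, so $L_i \in \cc^\bullet(G \mid H) = \cc^\bullet(H)$ by Theorem~\ref{Thm:Equalities-for-finite-index-subgroups-Thm2.8}. Finally, since the cosets $Hb_i$ partition $G$, we have $K = \bigsqcup_{i=1}^n K_i = \bigcup_{i=1}^n L_i b_i$, exhibiting $K$ in the required form.

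There is no serious obstacle: the proof is essentially a bookkeeping exercise combining Theorem~\ref{Thm:Equalities-for-finite-index-subgroups-Thm2.8} with the closure properties packaged in Lemmas~\ref{Lem:closed-under-intersection-2.1} and~\ref{Lem:HerbstLemma4.1}. The only point requiring a little care is to observe that the two inclusions work uniformly for both $\cc^\forall$ and $\cc^\exists$, which is why we state and use each of the auxiliary results in their $\cc^\bullet$-form; in particular, the use of closure under union (not available in an arbitrary cone or trio) is what forces the hypothesis that $\cc$ be a full semi-$\AFL$, explaining the gap between this theorem and Theorem~\ref{Thm:Equalities-for-finite-index-subgroups-Thm2.8}.
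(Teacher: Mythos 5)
Your proof is correct and follows essentially the same route as the paper's: the same coset decomposition $K=\bigcup_i (K\cap Hb_i)$, the same translations by $b_i^{\pm 1}$ via Lemma~\ref{Lem:HerbstLemma4.1}, and the same appeals to Lemma~\ref{Lem:closed-under-intersection-2.1}, Theorem~\ref{Thm:Equalities-for-finite-index-subgroups-Thm2.8}, and closure under union. No substantive differences.
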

\begin{proof}
The proof works the same both in the case of $\cc^\forall(G)$ and $\cc^\exists(G)$, so we maintain the ambiguous notation $\cc^\bullet(G)$ throughout the proof. We first prove the inclusion $\supseteq$ of \eqref{Eq:description-of-subsets}. Suppose that $X= \bigcup_{i=1}^n L_i b_i$ where $L_i \in \cc^\bullet(H)$ for all $1 \leq i \leq n$. By Theorem \ref{Thm:Equalities-for-finite-index-subgroups-Thm2.8} we have $L_i \in \cc^\bullet(G)$, and thus also $L_ib_i \in \cc^\bullet(G)$ for all $1 \leq i \leq n$ by Lemma \ref{Lem:HerbstLemma4.1}. Since $\cc$ is closed under union, we have $X\in \cc^\bullet(G)$, as desired.

To prove the inclusion $\subseteq$, let $X\in \cc^\bullet(G)$. As in the proof of Theorem~\ref{Thm:Equalities-for-finite-index-subgroups-Thm2.8}, we can write $X=X\cap G=\bigcup_{i=1}^n (Hb_i\cap X)$, and since $H\leqfi G$, it follows from Theorem~\ref{Thm:Anisimov-Seifert} that $H\in \REG^\forall (G)$. By Lemma~\ref{Lem:HerbstLemma4.1}, it follows that $Hb_i\in \REG^\forall (G)$ for all $1 \leq i \leq n$. Set $X_i= Hb_i\cap X$. Then $X_i\in \cc^\bullet(G)$ by Lemma~\ref{Lem:closed-under-intersection-2.1}, and hence by Lemma~\ref{Lem:HerbstLemma4.1} we have $L_i=K_ib_i^{-1}\in \cc^\bullet(G)$. Since $L_i \subseteq H$, we can apply Theorem~\ref{Thm:Equalities-for-finite-index-subgroups-Thm2.8}, to deduce that $L_i\in \cc^\bullet (H)$, and thus that
\[
X=\bigcup_{i=1}^n K_i=\bigcup_{i=1}^n L_ib_i,
\]
and since $X \in \cc^\bullet(G)$ was arbitrary, this completes the proof. 
\end{proof}

As a corollary, we can deduce the following generalization of \cite[Corollary~3.5]{Carvalho2023} and \cite[Corollary~3.9]{Carvalho2023} from the class of context-free languages to arbitrary full semi-$\AFL$s.

\begin{corollary}\label{Cor:cc-bullet-closed-under-int-and-complement-Cor2.10}
Let $\cc$ be a full semi-$\AFL$. Let $G$ be a finitely generated group, and let $H \leqfi G$. Then $\cc^\bullet(G)$ is closed under intersection (resp.\ complement) if and only if $\cc^\bullet(H)$ is closed under intersection (resp.\ complement). 
\end{corollary}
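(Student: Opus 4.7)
The plan is to leverage the structural description of $\cc^\bullet(G)$ in terms of $\cc^\bullet(H)$ furnished by Theorem~\ref{Thm:Structure-of-cc-subsets-2.9}. Fix a right transversal $b_1, \dots, b_n$ for $H$ in $G$, so that the cosets $Hb_1, \dots, Hb_n$ partition $G$. Then any $X \in \cc^\bullet(G)$ admits a \emph{disjoint} coset decomposition $X = \bigsqcup_{i=1}^n L_i b_i$ with $L_i = (X \cap Hb_i) b_i^{-1} \in \cc^\bullet(H)$, and conversely any such disjoint union lies in $\cc^\bullet(G)$. This disjointness is the feature which will make both intersection and complement transparent, without requiring any closure property of $\cc$ beyond what ``full semi-$\AFL$'' already provides.

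For the intersection case, the $(\Leftarrow)$ direction proceeds by writing $X = \bigsqcup L_i b_i$ and $Y = \bigsqcup M_i b_i$ via Theorem~\ref{Thm:Structure-of-cc-subsets-2.9}, and observing that disjointness of the coset partition forces
\[
X \cap Y \;=\; \bigsqcup_{i=1}^n (L_i \cap M_i)\, b_i.
\]
Each $L_i \cap M_i$ lies in $\cc^\bullet(H)$ by hypothesis, so Theorem~\ref{Thm:Structure-of-cc-subsets-2.9} yields $X \cap Y \in \cc^\bullet(G)$. For the $(\Rightarrow)$ direction, take $X, Y \in \cc^\bullet(H)$; Theorem~\ref{Thm:Equalities-for-finite-index-subgroups-Thm2.8} promotes them to elements of $\cc^\bullet(G)$, the hypothesis gives $X \cap Y \in \cc^\bullet(G)$, and since $X \cap Y \subseteq H$ another application of Theorem~\ref{Thm:Equalities-for-finite-index-subgroups-Thm2.8} descends it to $\cc^\bullet(H)$.

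The complement case is handled by the same playbook. For $(\Leftarrow)$, given $X = \bigsqcup L_i b_i \in \cc^\bullet(G)$, the coset partition of $G$ gives
\[
G \setminus X \;=\; \bigsqcup_{i=1}^n (H \setminus L_i)\, b_i,
\]
and since each $H \setminus L_i$ lies in $\cc^\bullet(H)$ by hypothesis, Theorem~\ref{Thm:Structure-of-cc-subsets-2.9} produces $G \setminus X \in \cc^\bullet(G)$. For $(\Rightarrow)$, given $X \in \cc^\bullet(H)$, Theorem~\ref{Thm:Equalities-for-finite-index-subgroups-Thm2.8} gives $X \in \cc^\bullet(G)$, so by hypothesis $G \setminus X \in \cc^\bullet(G)$. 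Since $H \leqfi G$ implies $H \in \REG^\forall(G)$ by Theorem~\ref{Thm:Anisimov-Seifert}, Lemma~\ref{Lem:closed-under-intersection-2.1} yields $H \setminus X = (G \setminus X) \cap H \in \cc^\bullet(G)$, and a final appeal to Theorem~\ref{Thm:Equalities-for-finite-index-subgroups-Thm2.8} restricts this membership to $\cc^\bullet(H)$.

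There is no genuine obstacle in this argument: the proof is essentially a bookkeeping exercise once the disjoint coset decomposition from Theorem~\ref{Thm:Structure-of-cc-subsets-2.9} is in hand. The only point requiring a mild sanity check is the identity $G \setminus X = \bigsqcup_i (H \setminus L_i) b_i$, which is immediate from $G = \bigsqcup_i Hb_i$ and $Hb_i \setminus L_i b_i = (H \setminus L_i) b_i$. Note that the argument treats $\cc^\forall$ and $\cc^\exists$ in parallel, so the ambiguous notation $\cc^\bullet$ can be maintained throughout.
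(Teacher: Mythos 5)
Your proposal is correct and follows essentially the same route as the paper: decompose elements of $\cc^\bullet(G)$ coset-wise via Theorem~\ref{Thm:Structure-of-cc-subsets-2.9}, intersect or complement within each coset, and shuttle between $\cc^\bullet(H)$ and $\cc^\bullet(G)$ using Theorem~\ref{Thm:Equalities-for-finite-index-subgroups-Thm2.8}. The only (harmless) deviation is in the complement direction $(\Rightarrow)$, where you obtain $H \setminus X = (G\setminus X)\cap H$ directly from $H \in \REG^\forall(G)$ and Lemma~\ref{Lem:closed-under-intersection-2.1}, whereas the paper decomposes $G\setminus X$ via Theorem~\ref{Thm:Structure-of-cc-subsets-2.9} and extracts the unique coset lying in $H$; both are valid.
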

\begin{proof}
Let $b_1,\ldots, b_n$ be a right transversal for $H$ in $G$. We start by proving the claim about intersections. Suppose that $\cc^\bullet(G)$ is closed under intersection and let $X,Y\in \cc^\bullet(H)$. From Theorem~\ref{Thm:Equalities-for-finite-index-subgroups-Thm2.8}, we know that $X,Y\in \cc^\bullet(G)$, so $X\cap Y\in \cc^\bullet(G)$ and, again by Theorem~\ref{Thm:Equalities-for-finite-index-subgroups-Thm2.8}, we have that  $X\cap Y\in \cc^\bullet(H)$. Suppose now that  $\cc^\bullet(H)$ is closed under intersection and take $X,Y\in \cc^\bullet(G)$. From Theorem~\ref{Thm:Structure-of-cc-subsets-2.9}, there exist subsets $X_i, Y_i\in \cc^\bullet(H)$ such that 
\begin{align*}
X=\bigcup_{i=1}^n X_ib_i \quad \text{ and } \quad Y=\bigcup_{i=1}^n Y_ib_i.
\end{align*}
Hence, we have that
\begin{align}
\label{eq: intersection}
X\cap Y=\left(\bigcup_{i=1}^n X_ib_i\right)\cap \left(\bigcup_{i=1}^n Y_ib_i\right)=\bigcup_{i=1}^n (X_i\cap Y_i)b_i.
\end{align}
Since $X_i\cap Y_i\in \cc^\bullet(H)$, for all $1 \leq i \leq n$, by Theorem~\ref{Thm:Equalities-for-finite-index-subgroups-Thm2.8} also  $X_i\cap Y_i\in \cc^\bullet(G)$. By Lemma~\ref{Lem:HerbstLemma4.1}, we now conclude $(X_i\cap Y_i)b_i\in \cc^\bullet(G).$ Since $\cc$ is a full semi-AFL, it is closed under union, which, together with \eqref{eq: intersection} implies that $X\cap Y\in \cc^\bullet(G)$.

We will now deal with the complement case. Suppose that $\cc^\bullet(G)$ is closed under complement and let $X\in \cc^\bullet(H)$. By Theorem~\ref{Thm:Equalities-for-finite-index-subgroups-Thm2.8}, $X\in \cc^\bullet(G)$, and so $G\setminus X\in \cc^\bullet(G)$. Applying Theorem~\ref{Thm:Structure-of-cc-subsets-2.9}, we get that there are subsets $L_i\in \cc^\bullet(H)$ such that $G\setminus X=\bigcup_{i=1}^n L_ib_i$.  Letting $1 \leq k \leq n$ be the unique $k$ such that $b_k\in H$, we obtain that
\[
H\setminus X=(G\setminus X)\cap H=H\cap \bigcup_{i=1}^n L_ib_i=L_jb_j\in \cc^\bullet(H),
\]
by Lemma~\ref{Lem:HerbstLemma4.1}, as required.

Suppose now that $\cc^\bullet(H)$ is closed under complement and let $X\in \cc^\bullet(G)$. Then $X=\bigcup_{i=1}^n X_ib_i$ for some $X_i\in \cc^\bullet(H)$, by Theorem~\ref{Thm:Structure-of-cc-subsets-2.9} and so 
\begin{align}
\label{eq: complement}
G\setminus X=\bigcup_{i=1}^n (H\setminus X_i)b_i.
\end{align} 
Since $\cc^\bullet(H)$ is closed under complement and $X_i\in \cc^\bullet(H)$, then $H\setminus X_i\in \cc^\bullet(H)$. By Theorem~\ref{Thm:Equalities-for-finite-index-subgroups-Thm2.8},  we get that $H\setminus X_i\in \cc^\bullet(G)$, and so $(H\setminus X_i)b_i\in \cc^\bullet(G)$ by Lemma~\ref{Lem:HerbstLemma4.1}. Since $\cc$ is a full semi-AFL, it is closed under union, which, by (\ref{eq: complement}), implies that $G\setminus X\in\cc^\bullet(G)$.
\end{proof}

We now turn to using these properties of relative $\cc^\bullet$-subsets. In particular, we will apply them in our study of a new property of finitely generated groups: namely, that of $\cc^\bullet$\textit{-flatness}.

\subsection{$\cc^\bullet$-flatness}\label{Subsec:flatness}

As indicated after Lemma~\ref{Lem:Elementary-passing-to-submonoid-2.5+2.6}, we now initiate the second part of our investigation into when the inequalities of Lemma~\ref{Lem:Elementary-passing-to-submonoid-2.5+2.6} can be promoted to equalities. In this line, we introduce the following terminology:  

\begin{definition}
Let $M$ be a finitely generated monoid, and $\cc$ a class of languages closed under inverse homomorphism. We say that $M$ is $\cc^\bullet$\textit{-flat\footnote{The terminology \textit{flat} is derived from homological algebra, by analogy with the condition under which a change of rings is well-behaved.} (as a monoid)} if for every finitely generated submonoid $N \leqfg M$ we have $\cc^\bullet(M \mid N) = \cc^\bullet(N)$. If $G$ is a group, then we analogously we say that $G$ is $\cc^\bullet$\textit{-flat (as a group)} if for every finitely generated subgroup $H \leqfg G$ we have $\cc^\bullet(G \mid H) = \cc^\bullet(H)$. 
\end{definition}

If $G$ is a group, then, unless explicitly specified otherwise, $\cc^\bullet$-flatness will refer to $\cc^\bullet$-flatness as a group, i.e.\ we will only consider finitely generated subgroups of $G$ rather than general submonoids; cf.\ also Remark~\ref{Rem:submonoid-flatness-vs-subgroup-flatness}. 

We begin with some examples and initial notes on $\cc^\bullet$-flat groups. First, for example, the following result is a simple restatement of Proposition~\ref{Prop:All-REGe-flat-old} in our new language.

\begin{proposition}[Anisimov \& Seifert, 1975 \cite{Anisimov1975}]
All groups are $\REG^\exists$-flat. \label{Prop:AnisimovSeifert-rephrased-as-flatness}
\end{proposition}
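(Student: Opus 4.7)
The plan is to observe that this statement is essentially a direct translation of Proposition~\ref{Prop:All-REGe-flat-old} into the $\cc^\bullet$-flatness terminology, and then to assemble the two inclusions from results already cited in the excerpt.

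First, I would fix an arbitrary finitely generated group $G$ and an arbitrary finitely generated subgroup $H \leqfg G$; the goal is to establish $\REG^\exists(G \mid H) = \REG^\exists(H)$. Since $\REG$ is a cone (in particular, closed under inverse homomorphism), Lemma~\ref{Lem:Elementary-passing-to-submonoid-2.5+2.6}(2) immediately gives the inclusion $\REG^\exists(H) \subseteq \REG^\exists(G \mid H)$. So the substantive content is the reverse inclusion.

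For the reverse inclusion, let $X \in \REG^\exists(G \mid H)$. Unpacking \eqref{Def:existsrelative}, this means $X \subseteq H$ and $X$ is a rational subset of $G$ (see Example~\ref{Examples}(1)). But this is exactly the hypothesis of Proposition~\ref{Prop:All-REGe-flat-old}, whose conclusion states that $X$ is then also a rational subset of $H$. Translating back via Example~\ref{Examples}(1), this says $X \in \REG^\exists(H)$, as required. Since $H \leqfg G$ was arbitrary, $G$ is $\REG^\exists$-flat, and since $G$ was arbitrary, every finitely generated group is $\REG^\exists$-flat.

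There is no real obstacle here: the entire content of the proposition is concentrated in the (non-trivial) Anisimov--Seifert-type result (Proposition~\ref{Prop:All-REGe-flat-old}), which has been cited as given. What one is really doing is confirming that the two definitions line up correctly, and the only mild care needed is to check that $\REG$ is closed under inverse homomorphism so that Lemma~\ref{Lem:Elementary-passing-to-submonoid-2.5+2.6}(2) is applicable and so that the notation $\REG^\exists(G \mid H)$ is independent of the chosen finite generating set of $G$.
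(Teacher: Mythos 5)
Your proposal is correct and matches the paper's treatment: the paper offers no separate proof, presenting the proposition as a ``simple restatement'' of Proposition~\ref{Prop:All-REGe-flat-old}, which is exactly the translation you carry out. Your only addition is to make explicit the easy inclusion $\REG^\exists(H) \subseteq \REG^\exists(G \mid H)$ via Lemma~\ref{Lem:Elementary-passing-to-submonoid-2.5+2.6}(2), which the paper leaves implicit; this is a harmless (indeed, welcome) extra degree of care.
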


This is called a kind of ``Fatou property'' for groups by Berstel \& Sakarovitch \cite{Berstel1986}. The question of characterizing precisely which groups are $\cc^\exists$-flat seems very difficult in general. Herbst \cite{Herbst1992} proved that all virtually free groups are $\CF^\exists$-flat, although an example due to Carvalho \cite[Example~4.8]{Carvalho2023} shows that not all groups are $\CF^\exists$-flat. We also have the following general result, also proving that the class of $\CF^\exists$-flat groups is larger than the class of virtually free groups. 

\begin{proposition}\label{Prop:AllAbelianAreCFE-flat}
Any finitely generated virtually abelian group is $\CF^\exists$-flat.
\end{proposition}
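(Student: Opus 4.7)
The plan is to prove the stronger statement that $\CF^\exists(G) = \REG^\exists(G)$ for every finitely generated virtually abelian group $G$. Once this is in hand, $\CF^\exists$-flatness follows essentially for free: for any finitely generated subgroup $H \leqfg G$, the subgroup $H$ is itself virtually abelian, and one has
\[
\CF^\exists(G \mid H) = \CF^\exists(G) \cap 2^H = \REG^\exists(G) \cap 2^H = \REG^\exists(G \mid H) = \REG^\exists(H) = \CF^\exists(H),
\]
where the step $\REG^\exists(G \mid H) = \REG^\exists(H)$ is Proposition~\ref{Prop:AnisimovSeifert-rephrased-as-flatness}, and the remaining equalities come from the definition \eqref{Def:existsrelative} and from applying the stronger statement to $G$ and to $H$.

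First I would establish the stronger statement in the abelian case. Let $A$ be a finitely generated abelian group with finite generating set $\Sigma$ and associated surjection $\pi \colon \Sigma^\ast \to A$. Because $A$ is commutative, $\pi$ factors through the free commutative monoid on $\Sigma$, equivalently through the Parikh map $\psi \colon \Sigma^\ast \to \N^\Sigma$. By Parikh's theorem, every $L \in \CF$ admits a regular $R \subseteq \Sigma^\ast$ with $\psi(L) = \psi(R)$, whence $\pi(L) = \pi(R)$. This yields $\CF^\exists(A) \subseteq \REG^\exists(A)$; the reverse inclusion is automatic from $\REG \subseteq \CF$.

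To pass from the abelian to the virtually abelian case, fix a finite index abelian subgroup $A \leqfi G$ (automatically finitely generated, and normal after intersecting conjugates, although normality is not required), together with a right transversal $b_1, \dots, b_n$ for $A$ in $G$. Since $\CF$ and $\REG$ are both full semi-$\AFL$s, Theorem~\ref{Thm:Structure-of-cc-subsets-2.9} applies to both classes and gives
\[
\CF^\exists(G) = \Bigl\{ \textstyle\bigcup_{i=1}^n L_i b_i : L_i \in \CF^\exists(A) \Bigr\}, \qquad \REG^\exists(G) = \Bigl\{ \textstyle\bigcup_{i=1}^n L_i b_i : L_i \in \REG^\exists(A) \Bigr\}.
\]
By the abelian case these two collections coincide, so $\CF^\exists(G) = \REG^\exists(G)$, as required.

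The principal obstacle, and the only step of real content, is the invocation of Parikh's theorem in the abelian case: once one exploits the fact that $\pi$ descends to the commutative image, the existence of a Parikh-equivalent regular language for each context-free language collapses $\CF^\exists$ onto $\REG^\exists$ in a single stroke. Everything else is routine bookkeeping with Theorem~\ref{Thm:Structure-of-cc-subsets-2.9}, Proposition~\ref{Prop:AnisimovSeifert-rephrased-as-flatness}, and the definition of relative subsets.
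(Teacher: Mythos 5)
Your proof is correct and follows essentially the same route as the paper: both reduce the statement to the identity $\CF^\exists(G) = \REG^\exists(G)$ for finitely generated virtually abelian $G$, and then conclude via the same chain of equalities through $\REG^\exists(G\mid H)$ using Proposition~\ref{Prop:AnisimovSeifert-rephrased-as-flatness}. The only difference is that the paper cites this identity from Herbst, whereas you supply a proof of it (Parikh's theorem in the abelian case, lifted to the virtually abelian case via Theorem~\ref{Thm:Structure-of-cc-subsets-2.9}), and that argument is sound.
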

\begin{proof}
It is not difficult to show (cf.\ \cite[p.~260]{Herbst1991}) that in any finitely generated virtually abelian group $G$ we have $\CF^\exists(G) = \REG^\exists(G)$. Thus in any finitely generated virtually abelian group $G$ with $H \leqfg G$ we have 
\[
\CF^\exists(G \mid H) = \CF^\exists(G) \cap 2^H = \REG^\exists(G) \cap 2^H = \REG^\exists(G \mid H) = \REG^\exists(H)
\]
using Proposition~\ref{Prop:AnisimovSeifert-rephrased-as-flatness} for the final equality. Since the subgroup $H$ is also finitely generated virtually abelian, we have $\REG^\exists(H) = \CF^\exists(H)$, and we are done. 
\end{proof}

Thus the class of $\CF^\exists$-flat groups strictly contains all virtually free groups and all abelian groups, but does not contain all solvable groups by virtue of \cite[Example~4.8]{Carvalho2023}. The following question is tantalizing, and clearly illustrates the difficulty of working with $\CF^\exists$-flatness in general. 

\begin{question}\label{question: characterize cfflat}
Is there an algebraic characterization of $\CF^\exists$-flat groups?
% In particular, is the word problem decidable in every $\CF^\exists$-flat group? 
\end{question}

On the other hand, characterizing $\cc^\forall$-flat groups seems like a tractable problem in many cases, as illustrated by the following results.

\begin{theorem}\label{Thm:REG-forall-CF-forall-characterized}
Let $G$ be a finitely generated group. Then:
\begin{enumerate}
\item $G$ is $\REG^\forall$-flat if and only if $G$ is finite (Anisimov \& Seifert \cite{Anisimov1975}).
\item $G$ is $\CF^\forall$-flat if and only if $G$ is virtually free (Carvalho, \cite{Carvalho2023}).
\end{enumerate}
\end{theorem}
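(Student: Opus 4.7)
The plan is to prove both parts via a single unifying principle: \textit{if $\cc$ is closed under inverse homomorphism and concatenation and $G$ is a finitely generated group with $\WP_A^G \in \cc$, then $G$ is $\cc^\forall$-flat.} Specialising this to $\cc = \REG$ handles part~(1) via Anisimov's Theorem~\ref{Thm:AnisimovMSHerbst}(1), and specialising to $\cc = \CF$ handles part~(2) via Muller \& Schupp's Theorem~\ref{Thm:AnisimovMSHerbst}(2); both $\REG$ and $\CF$ satisfy the required closure properties, being full AFLs.

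For the forward direction of each part, apply $\cc^\forall$-flatness to the trivial subgroup $H = \{1\} \leqfg G$. Trivially $\{1\} \in \cc^\forall(\{1\})$, so the defining equality $\cc^\forall(G \mid H) = \cc^\forall(H)$ forces $\{1\} \in \cc^\forall(G)$. By Example~\ref{Examples}(5), this is precisely $\WP_A^G \in \cc$, and then Anisimov (resp.\ Muller \& Schupp) forces $G$ to be finite (resp.\ virtually free). Independence of the conclusion from the choice of generating set $A$ is ensured by the fact that $\REG$ and $\CF$ are cones.

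For the converse, I would prove the unifying principle. Fix $H \leqfg G$ and $X \in \cc^\forall(H)$; it suffices to show $X \in \cc^\forall(G)$, since the reverse inclusion $\cc^\forall(G \mid H) \subseteq \cc^\forall(H)$ is Lemma~\ref{Lem:Elementary-passing-to-submonoid-2.5+2.6}(1). Replace $A$ by $A \cup A^{-1}$ if necessary to make it symmetric, pick a finite symmetric subset $C \subseteq A^\ast$ generating $H$, and let $\pi_C \colon C^\ast \to H$ denote the restriction of $\pi \colon A^\ast \to G$. By Lemma~\ref{Lem:Herbst-lemma-2.3} and closure of $\cc$ under inverse homomorphism, $\pi_C^{-1}(X) \in \cc$. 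The heart of the argument is the identity
\[
\pi^{-1}(X) \;=\; \WP_A^G \cdot \pi_C^{-1}(X).
\]
The inclusion $\supseteq$ is immediate: for $w = uc$ with $u \in \WP_A^G$ and $c \in \pi_C^{-1}(X)$ one has $\pi(w) = \pi(u)\pi(c) = \pi_C(c) \in X$. For $\subseteq$, given $w$ with $\pi(w) \in X \subseteq H$, choose $c \in C^\ast$ with $\pi_C(c) = \pi(w)$ and set $u := wc^{-1} \in A^\ast$; then $\pi(u) = 1$, so $w = uc$ is the desired factorisation. Both factors lie in $\cc$, and closure under concatenation gives $\pi^{-1}(X) \in \cc$, whence $X \in \cc^\forall(G)$.

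The only real technical nuisance is arranging the alphabets so that $C$ is symmetric inside the symmetric $A$: this is what simultaneously ensures surjectivity of $\pi_C$ onto $H$ and makes $c^{-1}$ a legitimate word in $A^\ast$. Once this standard normalisation is in place, the argument reduces cleanly to the two closure properties of $\cc$, and no group-theoretic input beyond the word-problem characterisations in Theorem~\ref{Thm:AnisimovMSHerbst}(1)--(2) is required.
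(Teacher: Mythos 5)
Your forward directions are fine and match the paper's Lemma~\ref{Lem:If-CAflat-then-WP-C}. The converse, however, rests on a ``unifying principle'' that is \textbf{false}, and the paper itself supplies the refutation: the class $\REC$ of recursive languages is closed under inverse homomorphism and concatenation, and $F_2 \times F_2$ has recursive word problem, yet Corollary~\ref{Cor:REC-is-a-counterexample} shows $F_2 \times F_2$ is not $\REC^\forall$-flat (because of Mikhailova's subgroup). So ``$\WP_A^G \in \cc$ plus closure under inverse homomorphism and concatenation implies $\cc^\forall$-flat'' cannot be a theorem; this is exactly the content of the failure of Conjecture~\ref{Conj:Main-false-conjecture}.

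The specific error is in the inclusion $\pi^{-1}(X) \subseteq \WP_A^G \cdot \pi_C^{-1}(X)$. Given $w$ with $\pi(w) \in X$, you set $u := wc^{-1}$ and claim $w = uc$; but $uc = wc^{-1}c$, which equals $w$ only as an element of $G$, not as a word in the free monoid $A^\ast$. The concatenation $\WP_A^G \cdot \pi_C^{-1}(X)$ consists only of words that \emph{literally factor} as a zero-word followed by a word over the sub-alphabet $C$, and most preimages of $X$ do not so factor. A minimal counterexample: $G = \Z = \langle a\rangle$, $A = \{a, a^{-1}\}$, $H = X = 2\Z$ with $C = \{aa, a^{-1}a^{-1}\}$; the word $w = aaaa^{-1}$ lies in $\pi^{-1}(X)$ but its only suffix in $C^\ast$ is the empty word, and $w \notin \WP_A^G$, so $w \notin \WP_A^G \cdot C^\ast$. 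In fact the only inclusion you can get this way is $\supseteq$, which shows $X \in \cc^\exists(G)$-type statements, not $\cc^\forall$ ones. The genuine converse directions require group-theoretic input: for (1) one uses that every subgroup of a finite group has finite index together with Theorem~\ref{Thm:Equalities-for-finite-index-subgroups-Thm2.8} (or simply that every subset of a finite group is recognizable); for (2) one needs Carvalho's argument exploiting the structure of finitely generated subgroups of virtually free groups, which is substantially harder than a closure-property computation.
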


One direction of the implications in Theorem~\ref{Thm:REG-forall-CF-forall-characterized} is a consequence of the classical Theorem~\ref{Thm:AnisimovMSHerbst} combined with the following more general result.

\begin{lemma}\label{Lem:If-CAflat-then-WP-C}
Let $\cc$ be a class of languages closed under inverse homomorphism, and let $G$ be a finitely generated group. If $G$ is $\cc^\forall$-flat, then the word problem of $G$ lies in $\cc$. 
\end{lemma}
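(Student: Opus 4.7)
The plan is to apply the $\cc^\forall$-flatness hypothesis to the trivial subgroup $H := \{1\} \leq G$, which is evidently finitely generated. By the definition of flatness, $\cc^\forall(G \mid H) = \cc^\forall(H)$, and by \eqref{Def:forallrelative} we have $\cc^\forall(G \mid H) \subseteq \cc^\forall(G)$. Hence it would suffice to establish the ``base case'' that $\{1\} \in \cc^\forall(H)$, from which one would conclude $\{1\} \in \cc^\forall(G)$; by Example~\ref{Examples}(5) (or by direct unwinding of the definitions) this is precisely the assertion that $\WP_A^G \in \cc$ for any finite generating set $A$ of $G$, which is the desired conclusion.

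The verification of the base case $\{1\} \in \cc^\forall(\{1\})$ is essentially automatic. Lemma~\ref{Lem:Herbst-lemma-2.3}, applicable since $\cc$ is closed under inverse homomorphism, ensures that the property is independent of the generating set chosen for $H$; fixing a single generator $a$ with $a \mapsto 1$, this reduces to showing $a^* \in \cc$. This follows by pulling back any language $L \in \cc$ containing the empty word along the erasing homomorphism $h \colon \{a\}^* \to B^*$ defined by $a \mapsto \varepsilon$, since then $h^{-1}(L) = a^*$, which is in $\cc$ by closure under inverse homomorphism.

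The entire argument is therefore a one-line consequence of the flatness definition applied to $H = \{1\}$. I anticipate no genuine obstacle: the only mild subtlety is the base case, and this is essentially unavoidable, since without it the trivial group itself would be a counterexample to the statement. In practice it is satisfied by every class of languages considered in the paper.
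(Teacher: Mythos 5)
Your argument is exactly the paper's proof: apply $\cc^\forall$-flatness to the trivial subgroup to get $\{1\}\in\cc^\forall(1)=\cc^\forall(G\mid 1)\subseteq\cc^\forall(G)$, which is the statement that the word problem lies in $\cc$. The only difference is that you justify the base case $\{1\}\in\cc^\forall(1)$ (which the paper asserts without comment), correctly noting that it needs $\cc$ to contain some language with the empty word — a hypothesis satisfied by every class the paper considers.
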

\begin{proof}
Suppose that $G$ is $\cc^\forall$-flat. Then 
\[
\{ 1 \} \in \cc^\forall(1) = \cc^\forall(G \mid 1) \subseteq \cc^\forall(G),
\]
where the only non-trivial step is the equality $\cc^\forall(1) = \cc^\forall(G \mid 1)$, which follows from the definition of $\cc^\forall$-flatness. However, $\{ 1 \} \in \cc^\forall(G)$ means precisely that the word problem of $G$ lies in $\cc$, so we are done. 
\end{proof}

Combining Lemma~\ref{Lem:If-CAflat-then-WP-C} with Lemma~\ref{Lem:HerbstLemma4.1}, it is easy to see that if $\cc$ is closed under union, then a group $G$ has word problem in $\cc$ if and only if for all subgroups $H \leqfg G$ we have 
\[
\cc^\forall(H) \cap \Fin(H) = \cc^\forall(G) \cap \Fin(H),
\]
where $\Fin(H)$ denotes the set of all finite subsets of $H$. 

Now, Lemma~\ref{Lem:If-CAflat-then-WP-C} combined with Theorem~\ref{Thm:REG-forall-CF-forall-characterized} naturally suggests the following tempting conjecture, which would strongly connect our new notion of $\cc^\forall$-flatness with the familiar realm of the word problem. 

\begin{conjecture}\label{Conj:Main-false-conjecture}
Let $G$ be a finitely generated group, and $\cc$ a class of languages closed under inverse homomorphism. Then $G$ is $\cc^\forall$-flat if and only if the word problem of $G$ lies in $\cc$.
\end{conjecture}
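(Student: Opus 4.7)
The forward direction of the conjecture is essentially free: by Lemma~\ref{Lem:If-CAflat-then-WP-C}, $\cc^\forall$-flatness applied to the trivial subgroup $\{1\} \leq G$ immediately yields $\{1\} \in \cc^\forall(G)$, hence $\WP_A^G \in \cc$. So the entire content of the conjecture is the converse: assuming $\WP_A^G \in \cc$, one must show that for every $H \leqfg G$ and every $X \in \cc^\forall(H)$, also $X \in \cc^\forall(G)$.

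To attack the converse, the plan would be to fix generating sets $A$ of $G$ and $B$ of $H$, realising each $b \in B$ as a word $\bar b \in A^\ast$, and to let $\pi_G \colon A^\ast \to G$ and $\pi_H \colon B^\ast \to H$ be the canonical maps, so that $\pi_H = \pi_G \circ h$ where $h\colon B^\ast \to A^\ast$ sends $b \mapsto \bar b$. Given $X \in \cc^\forall(H)$ with $L := \pi_H^{-1}(X) \in \cc$, the goal would be to exhibit $\pi_G^{-1}(X) \subseteq A^\ast$ as a language in $\cc$. The natural decomposition is first to intersect $A^\ast$ with $\pi_G^{-1}(H)$ to isolate those words representing elements of $H$, then to ``rewrite'' each such word as an element of $B^\ast$, and finally to pull back through $L$.

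The hard part is precisely the rewriting step. Closure under inverse homomorphism alone is insufficient: the rewriting map is not a homomorphism $A^\ast \to B^\ast$, and in general the $B$-length of a rewritten word admits no controllable bound. More seriously, even isolating $\pi_G^{-1}(H)$ already encodes the subgroup membership problem for $H$ in $G$, which is strictly stronger than the word problem of $G$. This suggests that, absent substantial further closure hypotheses on $\cc$, the only realistic output of this approach is a partial positive result, conditioned on extra structure of $\cc$.

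In fact, I expect the conjecture is false as stated. For $\cc = \REC$ it would predict that every finitely generated group with decidable word problem has decidable subgroup membership problem; classical examples such as Mikhailova's construction inside $F_2 \times F_2$ refute this. My expectation is therefore that the right theorem is a restriction of Conjecture~\ref{Conj:Main-false-conjecture} to classes $\cc$ enjoying enough closure (e.g.\ $\cc = \REG$ and $\cc = \CF$, where it reduces to Theorem~\ref{Thm:REG-forall-CF-forall-characterized} via Anisimov and Muller--Schupp), together with explicit counterexamples showing failure for $\cc \in \{\REC, \RE\}$ — matching the equivalence promised in the abstract between $\REC^\forall$-flatness and decidability of the subgroup membership problem.
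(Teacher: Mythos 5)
Your assessment of the conjecture matches the paper's own resolution: the statement is false as stated, the forward direction is exactly Lemma~\ref{Lem:If-CAflat-then-WP-C}, and the refutation of the converse is precisely the one the paper gives. Proposition~\ref{Prop:Counterexample-machine} formalizes your observation: if $\cc \subseteq \REC$, if $\WP(G) \in \cc$, and if $G$ contains a finitely generated subgroup $H$ with undecidable membership problem, then $\pi^{-1}(H) \notin \REC \supseteq \cc$ while trivially $H \in \cc^\forall(H)$, so $G$ is not $\cc^\forall$-flat. The paper instantiates this with Mikhailova's subgroup of $F_2 \times F_2$, exactly as you propose, obtaining counterexamples for $\cc \in \{\pCF, \CS, \REC\}$ (Corollary~\ref{Cor:REC-is-a-counterexample}), and Theorem~\ref{Thm:RECSubgroup-characterization} upgrades the $\REC$ case to the equivalence you anticipate between $\REC^\forall$-flatness and decidability of the subgroup membership problem. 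Your diagnosis of \emph{why} the converse fails --- that recognizing $\pi_G^{-1}(H)$ already encodes subgroup membership, which is strictly stronger than the word problem --- is the correct and central point.

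The one genuine error is your inclusion of $\RE$ among the failing classes. The paper proves the opposite in Theorem~\ref{Thm:Characterizing-REA-groups-Higman}: for $\cc = \RE$ the conjecture \emph{holds}, i.e.\ $G$ is $\RE^\forall$-flat if and only if $\WP(G) \in \RE$ if and only if $G$ is recursively presented. The Mikhailova obstruction does not apply because condition (a) of Proposition~\ref{Prop:Counterexample-machine} requires $\cc \subseteq \REC$, and $\RE \not\subseteq \REC$. Concretely, to show $X \in \RE^\forall(H)$ implies $X \in \RE^\forall(G)$ one never needs to \emph{decide} membership in $H$: one enumerates $B^\ast$-representatives of elements of $X$ and, in parallel, enumerates all $A^\ast$-words equal to them using $\WP(G) \in \RE$. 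Semi-decidability sidesteps the rewriting obstacle you correctly identified as fatal in the recursive setting. So the correct summary is that the conjecture holds for $\REG$, $\OC$, $\CF$, and $\RE$, and fails for $\pCF$, $\CS$, and $\REC$.
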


This conjecture, first posed by the second author to the first, thus holds true when $\cc = \REG$ and $\cc = \CF$. However, to the authors' surprise, the conjecture turns out to be \textbf{false} in general; and the present article partially grew out of an attempt to understand precisely to what extent the conjecture fails, and why it nevertheless holds true for the classes $\REG$ and $\CF$ of regular and context-free languages, respectively. Counterexamples to Conjecture~\ref{Conj:Main-false-conjecture} and further discussion will be the subject of \S\ref{Sec:4flatgroupsforspecific}. Before proceeding with this, however, we will present some general results on $\cc^\bullet$-flat groups and monoids in \S\ref{Sec:3flatmonoidsgeneral}.

\section{Closure properties of $\cc^\bullet$-flat monoids and groups}\label{Sec:3flatmonoidsgeneral}

\noindent In this section, we prove some general results on $\cc^\bullet$-flat monoids and groups. We begin with a result along the lines of Proposition~\ref{Prop:monoid-finite}, showing that $\cc^\bullet$-flatness is preserved by taking finitely generated submonoids resp.\ subgroups.

\begin{proposition}\label{Prop:Flatness-preserved-by-sub}
Let $\cc$ be a class of languages closed under inverse homomorphism. Let $M$ be a finitely generated monoid and $N \leqfg M$ be a finitely generated submonoid. If $M$ is $\cc^\bullet$-flat, then $N$ is $\cc^\bullet$-flat. Analogously, let $G$ be a finitely generated group, and $H \leqfg G$ be a finitely generated subgroup. If $G$ is $\cc^\bullet$-flat (as a group), then $H$ is $\cc^\bullet$-flat (as a group).
\end{proposition}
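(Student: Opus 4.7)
The plan rests on a single transitivity observation: if $P \leqfg N \leqfg M$, then any finite generating set for $P$ as a submonoid of $N$ is already a finite subset of $M$ generating $P$ as a submonoid of $M$, so $P \leqfg M$. This lets me apply the $\cc^\bullet$-flatness hypothesis on $M$ directly at the level of $P$, obtaining $\cc^\bullet(M \mid P) = \cc^\bullet(P)$. The remainder of the proof is then a matter of chaining this equality with the default inclusions from Lemma~\ref{Lem:Elementary-passing-to-submonoid-2.5+2.6} to establish the two equalities $\cc^\bullet(N \mid P) = \cc^\bullet(P)$ required for $\cc^\bullet$-flatness of $N$.

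For the $\cc^\forall$ statement, I would take $X \in \cc^\forall(P)$ and, using flatness of $M$, promote it to $X \in \cc^\forall(M)$ with $X \subseteq P \subseteq N$. Since $N \leqfg M$, Lemma~\ref{Lem:Elementary-passing-to-submonoid-2.5+2.6}(1) (a restatement of Lemma~\ref{Lem:ccForall-Preserved-By-Submonoids2.4}) yields $X = X \cap N \in \cc^\forall(N)$, so $X \in \cc^\forall(N \mid P)$. This is the non-trivial inclusion; the reverse $\cc^\forall(N \mid P) \subseteq \cc^\forall(P)$ is always true by the same Lemma.

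For the $\cc^\exists$ statement, the inclusions reverse, so the chain runs the other way. I would take $X \in \cc^\exists(N \mid P)$, so $X \subseteq P$ and $X \in \cc^\exists(N)$. Applying Lemma~\ref{Lem:Elementary-passing-to-submonoid-2.5+2.6}(2) to the pair $N \leqfg M$ gives $X \in \cc^\exists(M)$; since $X \subseteq P$, in fact $X \in \cc^\exists(M \mid P)$, which by flatness of $M$ coincides with $\cc^\exists(P)$. The reverse inclusion $\cc^\exists(P) \subseteq \cc^\exists(N \mid P)$ is again the always-valid half of Lemma~\ref{Lem:Elementary-passing-to-submonoid-2.5+2.6}(2). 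The group case is entirely parallel, replacing every occurrence of \emph{finitely generated submonoid} by \emph{finitely generated subgroup} throughout, since the transitivity observation and both parts of Lemma~\ref{Lem:Elementary-passing-to-submonoid-2.5+2.6} go through verbatim for subgroups.

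I do not anticipate any substantive obstacle: the argument is a short diagram-chase using only transitivity of the finitely-generated sub-structure relation and the two elementary inclusions already recorded in Lemma~\ref{Lem:Elementary-passing-to-submonoid-2.5+2.6}. The only point requiring mild care is tracking that the ``cheap'' inclusion of Lemma~\ref{Lem:Elementary-passing-to-submonoid-2.5+2.6} switches direction between the $\forall$ and $\exists$ versions, which dictates which side of each target equality has to be reached by invoking flatness of $M$ rather than by the default inclusion.
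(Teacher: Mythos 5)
Your proposal is correct and follows essentially the same route as the paper's proof: both rely on the transitivity $P \leqfg N \leqfg M \Rightarrow P \leqfg M$ to invoke flatness of $M$ at the level of $P$, and then chain this with the two one-directional inclusions of Lemma~\ref{Lem:Elementary-passing-to-submonoid-2.5+2.6} (applied to the pair $N \leqfg M$) to obtain the non-trivial half of each equality. Your remark about which half of the equality must be supplied by flatness versus the ``cheap'' inclusion matches exactly how the paper organizes the two cases.
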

\begin{proof}
We start by proving the claim for $\cc^\forall$-flatness. Suppose that $M$ is   $\cc^\forall$-flat and let $T\leqfg N$. We want to prove that 
$\cc^\forall(N \mid T)=\cc^\forall(T).$
 In view of   Lemma~\ref{Lem:Elementary-passing-to-submonoid-2.5+2.6}(1), it suffices to show that  $\cc^\forall(T)\subseteq\cc^\forall(N \mid T)$.
 Let $X \in  \cc^\forall(T)$. Since $G$ is  $\cc^\forall$-flat, then $\cc^\forall(T)=\cc^\forall(M \mid T)$, and so $X\in \cc^\forall(M)$. Since $X$ is fully contained in $T\leqfg N$, that is, $X\in \cc^\forall(M \mid N)$ it follows by another application of  Lemma~\ref{Lem:Elementary-passing-to-submonoid-2.5+2.6}(1) that, $X\in \cc^{\forall}(N)$, thus  $X\in \cc^{\forall}(N\mid T)$.
 
 Now, we deal with the case of $\cc^\exists$-flatness. Suppose that $M$ is   $\cc^\exists$-flat and let $T\leqfg N$. In view of Lemma~\ref{Lem:Elementary-passing-to-submonoid-2.5+2.6}(2), it suffices to show that  $\cc^\exists(N \mid T)\subseteq \cc^\exists(T)$. Let $X\in \cc^\exists(N \mid T)$. Then $X\in \cc^\exists(N)$, and so, by  Lemma~\ref{Lem:Elementary-passing-to-submonoid-2.5+2.6}(2),  $X\in \cc^\exists(M \mid T)$.
Since $M$ is  $\cc^\exists$-flat,  then  $ \cc^\exists(M \mid T)=\cc^\exists(T)$ and so $X\in \cc^\exists(T)$.
\end{proof}

For another preservation property, we have the following, showing that at least $\cc^\exists$-flatness behaves well with respect to taking large submonoids. 

\begin{proposition}
Let $\cc$ be a full semi-$\AFL$.% containing all singleton languages.
Let $M$ be a finitely generated submonoid, and let  $N \leq M$ be such that $M \setminus N$ is an ideal of $M$, and such that $|M \setminus N| < \infty$.Then $M$ is $\cc^\exists$-flat if and only if $N$ is $\cc^\exists$-flat.
\end{proposition}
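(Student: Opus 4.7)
My plan is to prove each implication separately. The forward direction will follow almost immediately from material already established, while the reverse direction requires a decomposition argument resting on two structural observations.

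For the implication ($\Rightarrow$), I first invoke the Jura--Ruskuc theorem (as cited in the proof of Proposition~\ref{Prop:monoid-finite}): since $M$ is finitely generated and $M \setminus N$ is a finite ideal, $N$ is finitely generated as a submonoid of $M$, so $N \leqfg M$. Proposition~\ref{Prop:Flatness-preserved-by-sub} then directly yields that $N$ is $\cc^\exists$-flat.

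For the implication ($\Leftarrow$), assume $N$ is $\cc^\exists$-flat, fix any $T \leqfg M$, and aim to show $\cc^\exists(M \mid T) \subseteq \cc^\exists(T)$ (the reverse inclusion being Lemma~\ref{Lem:Elementary-passing-to-submonoid-2.5+2.6}(2)). The first key observation is that $N \in \REG^\forall(M)$: with the generating set $C = B \cup (M \setminus N)$ for $M$ as in the proof of Proposition~\ref{Prop:monoid-finite}, I will check that $\pi_M^{-1}(N) = B^\ast$, because $M \setminus N$ being a two-sided ideal forces any word containing a letter from $M \setminus N$ to map outside $N$; the language $B^\ast$ is regular. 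The second key observation is that $T \cap N$ is large in $T$: one has $T \setminus (T \cap N) = T \cap (M \setminus N)$, a finite subset that inherits the ideal structure from $M \setminus N$, so another application of Jura--Ruskuc gives $T \cap N \leqfg T$, and of course also $T \cap N \leqfg N$.

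With these in hand, given any $X \in \cc^\exists(M \mid T)$, I decompose $X = (X \cap N) \cup (X \setminus N)$. The finite piece $X \setminus N \subseteq T \cap (M \setminus N)$ lies in $\cc^\exists(T)$ because $\cc$, being a full semi-$\AFL$, contains all singleton languages (and hence all finite subsets via closure under union). For the piece $X \cap N$, Lemma~\ref{Lem:closed-under-intersection-2.1} combined with $N \in \REG^\forall(M)$ places $X \cap N$ in $\cc^\exists(M)$; since $X \cap N \subseteq N$, Proposition~\ref{Prop:monoid-finite} gives $X \cap N \in \cc^\exists(N)$, and in fact, as it lies inside $T \cap N$, in $\cc^\exists(N \mid T \cap N)$. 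The hypothesized $\cc^\exists$-flatness of $N$ then transports it to $\cc^\exists(T \cap N)$, and a final application of Lemma~\ref{Lem:Elementary-passing-to-submonoid-2.5+2.6}(2) (applied to $T \cap N \leqfg T$) sends it into $\cc^\exists(T)$. Closure under union in $\cc$ combines the two pieces to give $X \in \cc^\exists(T)$.

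The main obstacle is verifying the two structural observations: that $N$ is recognizable in $M$, and, more importantly, that the large-submonoid structure persists on restriction to an arbitrary finitely generated submonoid $T$. Once these are in hand, the remainder is a routine chase through Lemma~\ref{Lem:Elementary-passing-to-submonoid-2.5+2.6}, Lemma~\ref{Lem:closed-under-intersection-2.1}, and Proposition~\ref{Prop:monoid-finite}. I note that nothing in this approach would work for $\cc^\forall$-flatness, since Lemma~\ref{Lem:Elementary-passing-to-submonoid-2.5+2.6} goes in the opposite direction in the universal setting; whether the analogous statement holds for $\cc^\forall$ seems to require a genuinely different argument.
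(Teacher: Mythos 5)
Your proof is correct and follows essentially the same route as the paper's: both directions, the decomposition $X=(X\cap N)\cup(X\setminus N)$, the treatment of the finite piece via singletons and unions, and the passage $X\cap N\in\cc^\exists(N\mid T\cap N)\Rightarrow\cc^\exists(T\cap N)\Rightarrow\cc^\exists(T)$ all match, with your use of $N\in\REG^\forall(M)$ together with Lemma~\ref{Lem:closed-under-intersection-2.1} and Proposition~\ref{Prop:monoid-finite} being just a repackaging of the paper's inline argument with $L_N=L\cap B^\ast$. Your explicit verification via Jura--Ru\v{s}kuc that $T\cap N$ is a finitely generated (large) submonoid of $T$ is a point the paper leaves implicit, and is a welcome addition.
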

\begin{proof}
Since flatness is preserved by taking submonoids by Proposition \ref{Prop:Flatness-preserved-by-sub}, we only have to prove that if $N$ is $\cc^\exists$-flat, then so is $M$. Hence, suppose that $N$ is $\cc^\exists$-flat and let $T\leqfg M$. In order to prove that $N$ is $\cc^\exists$-flat, it is enough to show that  $\cc^\exists(M \mid T) \subseteq  \cc^\exists(T)$ by Lemma \ref{Lem:Elementary-passing-to-submonoid-2.5+2.6}(2). Let $X\in \cc^\exists(M \mid T)$. Then $X=X_N\cup X_{\overline N}$, where $X_N=X\cap  N$ and $X_{\overline N}=X\setminus X_N\subseteq M\setminus N$. Since $\cc$ is closed under union, so is $\cc^\exists(T)$ and since $\cc$ contains all singleton languages, it follows that $X_{\overline N}\in \cc^\exists(T)$, since $X_{\overline N}$ is finite. We will now prove that $X_N\in \cc^\exists(T)$, which, together with closure under union, implies that $X\in \cc^\exists(T)$. To do so, we proceed as in the proof of Proposition \ref{Prop:monoid-finite}.

Take a finite generating set $B$ for $N$ and let $\pi_N \colon B^\ast \to N$ be the associated surjective homomorphism. Then $C = B \cup (M \setminus N)$ is a finite generating set for $M$, and there is a surjective homomorphism $\pi_M \colon C^\ast \to M$. 
A word $w \in C^\ast$ represents an element of $N$ if and only if $w \in B^\ast$, and hence $\pi_M |_{B^\ast} = \pi_N$. Since $X\in \cc^\exists(M)$, there exists a language $L\in \cc$ such that $L\pi_M=X$. Since $\cc$ is a full semi-AFL, it is closed under intersection with regular languages, thus $L_N=L\cap B^*\in \cc$ and $X_N=L_N\pi_M|{B^\ast}=L_N\pi_N$. Therefore, $X_N\in \cc^\exists(N \mid N\cap T)$, which by $\cc^\exists$-flatness of $N$, implies that $X_N\in \cc^\exists(N\cap T)$. It now follows from Lemma \ref{Lem:Elementary-passing-to-submonoid-2.5+2.6}(2) that  $X_N\in \cc^\exists(T)$.
\end{proof}

We now prove that $\cc^\forall$-flatness as a group is preserved by taking finite extensions; we do not know if the corresponding statement holds for $\cc^\exists$-flatness, see Question~\ref{Quest:Is-ccE-flatness-closed-wrt-finite-extensions}.

\begin{proposition}\label{Prop:cc-flatness-preserved-by-finite-index}
Let $\cc$ be a full semi-$\AFL$. Let $G$ be a finitely generated group, and  let $H \leqfi G$. Then $G$ is $\cc^\bullet$-flat if and only if $H$ is $\cc^\bullet$-flat. 
\end{proposition}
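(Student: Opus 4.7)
The forward direction will follow immediately from Proposition~\ref{Prop:Flatness-preserved-by-sub}: any finite-index subgroup of a finitely generated group is itself finitely generated, so $\cc^\bullet$-flatness of $G$ restricts to $H$. The substance of the proposition lies in the converse, so I will assume $H$ is $\cc^\bullet$-flat and let $K\leqfg G$ be arbitrary; the plan is to establish the equality $\cc^\bullet(G\mid K)=\cc^\bullet(K)$ by exploiting the subgroup $K\cap H$, which acts as a bridge between $K$ and $H$: since $H\leqfi G$ we have $K\cap H\leqfi K$, while at the same time $K\cap H\leqfg H$. After fixing a right transversal $b_1,\dots,b_n\in K$ for $K\cap H$ in $K$, so that $K=\bigsqcup_i (K\cap H)b_i$, I will treat the $\cc^\forall$ and $\cc^\exists$ halves separately.

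In the $\cc^\forall$ case the only non-trivial inclusion, in view of Lemma~\ref{Lem:Elementary-passing-to-submonoid-2.5+2.6}(1), is $\cc^\forall(K)\subseteq \cc^\forall(G\mid K)$. Starting from $X\in\cc^\forall(K)$, I will apply Theorem~\ref{Thm:Structure-of-cc-subsets-2.9} to $K\cap H\leqfi K$ to obtain a decomposition $X=\bigcup_i X_ib_i$ with each $X_i\in \cc^\forall(K\cap H)$. Flatness of $H$ will then give $X_i\in \cc^\forall(H)$, Theorem~\ref{Thm:Equalities-for-finite-index-subgroups-Thm2.8}(a) will lift this to $X_i\in\cc^\forall(G)$, and Lemma~\ref{Lem:HerbstLemma4.1} together with closure of $\cc$ under union (being a full semi-$\AFL$) will reassemble $X\in\cc^\forall(G)$; since $X\subseteq K$, this is the required conclusion.

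The $\cc^\exists$ case is more delicate: the non-trivial inclusion is $\cc^\exists(G\mid K)\subseteq\cc^\exists(K)$, but the structure theorem cannot be invoked to decompose $X$ into visibly $\cc^\exists$ pieces inside $K$. Instead, given $X\in\cc^\exists(G\mid K)$, I will decompose $X=\bigsqcup_i X_ib_i$ geometrically by intersecting with the cosets $(K\cap H)b_i$, so that (using $b_i\in K$) the pieces take the form $X_i=Xb_i^{-1}\cap H$. The key move will be to use Lemma~\ref{Lem:HerbstLemma4.1} to push $X$ to $Xb_i^{-1}\in\cc^\exists(G)$, and then Lemma~\ref{Lem:closed-under-intersection-2.1} combined with the recognizability $H\in\REG^\forall(G)$ (Theorem~\ref{Thm:Anisimov-Seifert}) to extract $X_i\in\cc^\exists(G)$. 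From there I will descend $X_i$ along the chain $\cc^\exists(G)\to\cc^\exists(H)\to\cc^\exists(K\cap H)\to\cc^\exists(K)$, using in turn Theorem~\ref{Thm:Equalities-for-finite-index-subgroups-Thm2.8}(b) (since $X_i\subseteq H$), the assumed $\cc^\exists$-flatness of $H$ at $K\cap H$, and Lemma~\ref{Lem:Elementary-passing-to-submonoid-2.5+2.6}(2), before reassembling via Lemma~\ref{Lem:HerbstLemma4.1} and union closure.

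The main obstacle I expect is precisely this $\cc^\exists$ subcase: whereas $\cc^\forall$ benefits from the structure theorem producing pieces in the correct class for free, in the $\cc^\exists$ setting the pieces must be manufactured by the translate-and-intersect trick $X\mapsto Xb_i^{-1}\cap H$, and the crucial entry point is the recognizability of the finite-index subgroup $H$ inside $G$, without which Lemma~\ref{Lem:closed-under-intersection-2.1} would be unavailable and the proof would not close.
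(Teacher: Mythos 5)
Your proposal is correct and follows essentially the same route as the paper's proof: both directions hinge on decomposing a subset of $K$ along the cosets of $K\cap H\leqfi K$, pushing each piece into $H$ via the assumed flatness, and lifting back to $G$ (or descending to $K$) using Theorem~\ref{Thm:Equalities-for-finite-index-subgroups-Thm2.8}, Lemma~\ref{Lem:HerbstLemma4.1}, Lemma~\ref{Lem:closed-under-intersection-2.1}, and union closure. The only differences are cosmetic: in the $\cc^\forall$ case you invoke Theorem~\ref{Thm:Structure-of-cc-subsets-2.9} to produce the coset decomposition where the paper derives it by hand, and in the $\cc^\exists$ case you translate before intersecting with $H$ rather than intersecting with $Hb_i$ first.
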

\begin{proof}
The direction $(\implies)$ follows directly from Proposition~\ref{Prop:Flatness-preserved-by-sub}. We now prove the direction $(\impliedby)$ for the universal case. Suppose that $H$ is $\cc^\forall$-flat and take $H'\leqfg G$. Since $H\leqfi G$, we have $H\cap H'\leqfi H'$. Let $b_1,\ldots,b_n$ be a right transversal for $H\cap H'$ in $H'$.  We want to prove that $\cc^\forall(H')\subseteq\cc^\forall(G\mid H')$, and that suffices by   Lemma~\ref{Lem:Elementary-passing-to-submonoid-2.5+2.6}(1). 
Let $X\in  \cc^\forall(H')$ and write  
$$X=X\cap H'=\bigcup_{i=1}^n \left((H'\cap H)b_i\cap X\right).$$ Now,  put $X_i=(H\cap H')b_i \cap X$. We will prove that for all $1 \leq i \leq n$ we have $X_i\in \cc^\forall(G)$. Together with the fact that $\cc$ is closed under union, this will yield the result. 

Since $H\cap H'\leqfi H'$, then $H\cap H'\in \REG^\forall(H')$ by ~Theorem~\ref{Thm:Anisimov-Seifert}. By Lemma ~\ref{Lem:HerbstLemma4.1}, $(H\cap H')b_i\in \REG^\forall(H')$. Then, by Lemma~\ref{Lem:closed-under-intersection-2.1}, $X_i\in \cc^\forall(H')$, and, again by Lemma ~\ref{Lem:HerbstLemma4.1}, $X_ib_i^{-1}\in \cc^\forall(H')$. Since  $X_ib_i^{-1}\subseteq H\cap H'\leqfi H'$, it now follows from Theorem~\ref{Thm:Equalities-for-finite-index-subgroups-Thm2.8} that $X_ib_i^{-1}\in \cc^\forall(H\cap H')$. Since $H$ is  $\cc^\forall$-flat, $X_ib_i^{-1}\in \cc^\forall (H)$ and by Theorem~\ref{Thm:Equalities-for-finite-index-subgroups-Thm2.8}, we have that $X_ib_i^{-1}\in \cc^\forall (G)$. Therefore, by Lemma~\ref{Lem:HerbstLemma4.1}, $X_i\in \cc^\forall (G)$. 

Finally, we prove the direction $(\impliedby)$ for the existencial case. Let $H\leqfi G$ be such that $H$ is $\cc^\exists$-flat and $H'\leqfg G$. We want to prove that $G$ is $\cc^\exists$-flat, that is, that $\cc^\exists(H')=\cc^\exists(G\mid H')$. In view of Lemma \ref{Lem:Elementary-passing-to-submonoid-2.5+2.6}, the only thing to prove is that $\cc^\exists(G\mid H')\subseteq \cc^\exists(H')$. Let then $X\in \cc^\exists(G\mid H')$ and  $\{b_1,\ldots b_n\}$ be a right transversal for $H$ in $H\cap H'$. Then $$X=\bigcup_{i=1}^n X\cap (H\cap H')b_i=\bigcup_{i=1}^n X\cap Hb_i.$$ Indeed, the first equality follows from the fact that $X\subseteq H'$, and so $X=X\cap H'$. For the second equality, it is obvious that, for all $i\in [n]$, $X\cap (H\cap H')b_i\subseteq X\cap Hb_i$ and if $g\in X\cap Hb_i$, then $g=hb_i$ for some $h\in H$ and $hb_i\in X\subseteq H'$. Since $b_i\in H'$, then $h\in H'$ and $g=hb_i$, with $h\in H\cap H'$.

Now, $H\leqfi G,$ which, by Theorem \ref{Thm:Anisimov-Seifert}, means that $H\in \REG^\forall(G)$. Let $i\in [n]$. By Lemma \ref{Lem:HerbstLemma4.1}, it follows that $Hb_i\in \REG^\forall(G)$, and so, putting $X_i= X\cap Hb_i$, we have that $X_i\in \cc^\exists(G)$ by Lemma \ref{Lem:closed-under-intersection-2.1}. Now, $X_ib_i^{-1}$ must belong to $\cc^\exists(G\mid H)$ by Lemma \ref{Lem:HerbstLemma4.1}. Since $H\leqfi G$, Theorem \ref{Thm:Equalities-for-finite-index-subgroups-Thm2.8} yields that $X_ib_i^{-1}\in \cc^\exists(H)$. Since $X_ib_i^{-1}\subseteq H'$ and $H$ is $\cc^\exists$-flat, then $X_ib_i^{-1}\in \cc^\exists(H')$, which, again by Lemma \ref{Lem:HerbstLemma4.1}, means that $X_i\in \cc^\exists(H')$. Since $\cc$ is assumed to be closed under union, then so is $\cc^\exists(H')$, and thus $X=\bigcup_{i=1}^n X_i\in \cc^\exists(H')$.

This completes the proof of the lemma. 
\end{proof}

\begin{remark}\label{Rem:submonoid-flatness-vs-subgroup-flatness}
Let $\REC$ be the class of all recursive languages. In \S\ref{Subsec:Rec-and-CS}, we will show that a group is $\REC^\forall$-flat \textit{as a group} if and only if it has decidable subgroup membership problem; and that a group is $\REC^\forall$-flat \textit{as a monoid} if and only if it has decidable submonoid membership problem. Decidability of the subgroup membership problem is clearly preserved by taking finite extensions. On the other hand, Dong \cite{Dong2024} has recently shown that there exists a group $G$ with a finite index subgroup $H \leqfi G$ such that the submonoid membership problem is decidable in $H$ but undecidable in $G$. This shows that $\REC^\forall$-flatness \textit{as a monoid} is not in general preserved by taking finite extensions of groups; but Proposition~\ref{Prop:cc-flatness-preserved-by-finite-index} shows that $\REC^\forall$-flatness \textit{as a group} is, on the other hand, preserved by finite extensions of groups. In particular, there exists a group which is $\REC^\forall$-flat as a group, but not as a monoid. This shows the importance of distinguishing between these types of flatness. 
\end{remark}

Thus, combining Proposition~\ref{Prop:Flatness-preserved-by-sub} and Proposition~\ref{Prop:cc-flatness-preserved-by-finite-index}, we deduce the following.

\begin{theorem}\label{Thm:cc-forall-is-closed-under-fg-and-fi}
Let $\cc$ be a full semi-$\AFL$. Then the class of $\cc^\forall$-flat groups is closed under taking finitely generated subgroups and finite extensions. 
\end{theorem}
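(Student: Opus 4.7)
The plan is to obtain the theorem directly as a corollary of the two propositions immediately preceding it, so the argument amounts to little more than verifying that the hypothesis ``$\cc$ is a full semi-$\AFL$'' is strong enough to invoke both. First I would handle closure under finitely generated subgroups: since every full semi-$\AFL$ is in particular a cone, hence closed under inverse homomorphism, the hypothesis of Proposition~\ref{Prop:Flatness-preserved-by-sub} is satisfied, and applying it in the group setting yields that whenever $G$ is $\cc^\forall$-flat and $H \leqfg G$, then $H$ is $\cc^\forall$-flat.

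Next I would handle closure under finite extensions. Suppose $H$ is a $\cc^\forall$-flat group and $H \leqfi G$ with $G$ finitely generated. The hypotheses of Proposition~\ref{Prop:cc-flatness-preserved-by-finite-index} are exactly that $\cc$ is a full semi-$\AFL$ and that $H \leqfi G$, and the $(\impliedby)$ direction of that proposition (in the universal case) then immediately gives that $G$ is $\cc^\forall$-flat. Combining the two yields the theorem.

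There is no genuine obstacle here, since all the work has already been done in Propositions~\ref{Prop:Flatness-preserved-by-sub} and~\ref{Prop:cc-flatness-preserved-by-finite-index}. The one subtlety I would flag in the write-up is that, in view of Remark~\ref{Rem:submonoid-flatness-vs-subgroup-flatness}, the theorem is genuinely about $\cc^\forall$-flatness \emph{as a group}: Dong's example shows that the analogue of the finite-extension half would already fail for $\cc = \REC$ if one interpreted flatness in the monoid sense. So the restriction of the quantifier in the definition of flatness to finitely generated \emph{sub}\textit{groups} is not cosmetic but is doing real work in the second half of the proof.
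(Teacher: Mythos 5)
Your proposal is correct and coincides with the paper's own argument: the theorem is deduced there exactly by combining Proposition~\ref{Prop:Flatness-preserved-by-sub} (noting a full semi-$\AFL$ is a cone, hence closed under inverse homomorphism) with the $(\impliedby)$ direction of Proposition~\ref{Prop:cc-flatness-preserved-by-finite-index}. Your added caveat about flatness \emph{as a group} versus \emph{as a monoid} matches the point of Remark~\ref{Rem:submonoid-flatness-vs-subgroup-flatness} and is not a deviation.
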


Finally, to end this section, we mention that as discussed above, we do not know whether the analogue of Proposition~\ref{Prop:cc-flatness-preserved-by-finite-index} holds for $\cc^\exists$-flat groups. We therefore pose this natural question.

\begin{question}\label{Quest:Is-ccE-flatness-closed-wrt-finite-extensions}
If $\cc$ is a full semi-$\AFL$, is $\cc^\exists$-flatness preserved by taking finite extensions of groups? In particular, is the class of $\CF^\exists$-flat groups closed under taking finite extensions? 
\end{question}

Since every finitely generated group is $\REG^\exists$-flat by Proposition~\ref{Prop:AnisimovSeifert-rephrased-as-flatness}, it follows that Question~\ref{Quest:Is-ccE-flatness-closed-wrt-finite-extensions} at least has an affirmative answer in the case of $\cc = \REG$.

\section{Characterizing $\cc^\bullet$-flat groups for specific classes}\label{Sec:4flatgroupsforspecific}

\noindent In this section, we will discuss the problem of characterizing the class of $\cc^\bullet$-flat groups for specific classes of languages $\cc$. Given the apparent general difficulty of describing $\cc^\exists$-flat groups, we will mainly focus on providing characterizations of $\cc^\forall$-flat groups, and in particular provide further examples of when Conjecture~\ref{Conj:Main-false-conjecture} is true beyond Theorem~\ref{Thm:REG-forall-CF-forall-characterized}, and provide counterexamples to the conjecture. First, in \S\ref{Subsec:Characterizing-1Counter}, we show that the $\OC^\forall$-groups can be completely characterized as the virtually cyclic groups. Next, in \S\ref{Subsec:Rec-and-CS}, we characterize all $\REC^\forall$-flat groups and monoids (with flatness taken as groups resp.\ as monoids, keeping in mind Remark~\ref{Rem:submonoid-flatness-vs-subgroup-flatness}) as those with a decidable subgroup resp.\ submonoid membership problem. This shows that the class of $\REC^\forall$-flat groups is strictly smaller than the class of groups with word problem in $\REC$, thus providing our first counterexample to Conjecture~\ref{Conj:Main-false-conjecture}. We also show that classes $\cc = \CS$ and $\cc = \pCF$ (the context-sensitive and poly-context-free languages, respectively) also yield counterexamples. Finally, in \S\ref{Subsec:Recursively-enumerable-flatness}, we discuss the case of $\cc = \RE$, the recursively enumerable languages, and characterize the $\RE^\forall$-flat groups, verifying Conjecture~\ref{Conj:Main-false-conjecture} for this class. Finally, using Tarski monsters, we prove that the class of $\RE^\exists$-flat groups is strictly contained in the class of $\RE^\forall$-flat groups.

\subsection{One-counter flatness}\label{Subsec:Characterizing-1Counter}

We now show that Conjecture~\ref{Conj:Main-false-conjecture} has an affirmative answer at least in the case of $\cc = \OC$, i.e.\ the class of $\OC^\forall$-flat groups can be completely characterized as those groups with a one-counter word problem. 

\begin{theorem}\label{Thm:OC-forall-characteriztion.}
Let $G$ be a finitely generated group. The following are equivalent:
\begin{enumerate}
\item $G$ is $\OC^\forall$-flat.
\item $G$ has a one-counter word problem.
\item $G$ is virtually cyclic.
\end{enumerate}
\end{theorem}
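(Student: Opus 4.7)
The plan is to establish the cycle $(1) \Rightarrow (2) \Rightarrow (3) \Rightarrow (1)$, noting that the equivalence $(2) \Leftrightarrow (3)$ is already Theorem~\ref{Thm:AnisimovMSHerbst}(3) of Herbst, so the only new work concerns the implications involving $\OC^\forall$-flatness.

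For $(1) \Rightarrow (2)$, I would apply Lemma~\ref{Lem:If-CAflat-then-WP-C} directly. That lemma requires only that $\OC$ be closed under inverse homomorphism; this holds since the class of one-counter languages is a trio. Thus $\OC^\forall$-flatness of $G$ forces $\WP_A^G \in \OC$. The implication $(2) \Rightarrow (3)$ is then just Herbst's classical result, so combining these produces a one-step route $(1) \Rightarrow (3)$ without further effort.

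For the remaining and substantive implication $(3) \Rightarrow (1)$, the idea is to exploit the rigidity of virtually cyclic groups recorded in Example~\ref{Examples}(4): for any finitely generated virtually cyclic group $K$, one has the identification $\OC^\forall(K) = \REG^\exists(K)$ (this is Herbst's main result in \cite{Herbst1991}). Assume $G$ is virtually cyclic, and let $H \leqfg G$ be arbitrary; then $H$ is itself virtually cyclic since subgroups of virtually cyclic groups are virtually cyclic. The plan is to verify the equality $\OC^\forall(G\mid H) = \OC^\forall(H)$ by the following chain, using only results already proved in the excerpt:
\begin{align*}
\OC^\forall(G \mid H)
&= \OC^\forall(G) \cap 2^H && \text{(by definition)}\\
&= \REG^\exists(G) \cap 2^H && \text{(Herbst's identity for $G$)}\\
&= \REG^\exists(G \mid H) && \text{(by definition)}\\
&= \REG^\exists(H) && \text{(Proposition~\ref{Prop:AnisimovSeifert-rephrased-as-flatness})}\\
&= \OC^\forall(H) && \text{(Herbst's identity for $H$)}.
\end{align*}
Since $H$ was an arbitrary finitely generated subgroup, this gives $\OC^\forall$-flatness of $G$.

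I expect no serious obstacle: the delicate work is entirely packaged inside Herbst's collapse $\OC^\forall = \REG^\exists$ for virtually cyclic groups, together with the universal $\REG^\exists$-flatness of Anisimov--Seifert. The only point deserving a sentence of justification is that one may legitimately invoke Lemma~\ref{Lem:If-CAflat-then-WP-C} in step $(1)\Rightarrow(2)$, namely the verification that $\OC$ is closed under inverse homomorphism. Everything else is a short assembly of existing results.
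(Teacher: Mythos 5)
Your proposal is correct, and the implications $(1)\Rightarrow(2)$ and $(2)\Leftrightarrow(3)$ are handled exactly as in the paper (via Lemma~\ref{Lem:If-CAflat-then-WP-C} and Herbst's classical theorem). The substantive difference is in $(3)\Rightarrow(1)$. The paper does \emph{not} invoke Herbst's collapse $\OC^\forall(K)=\REG^\exists(K)$ for virtually cyclic $K$; instead it first reduces to the case $G=\Z$ using Proposition~\ref{Prop:cc-flatness-preserved-by-finite-index} (flatness passes to and from finite-index subgroups, and the trivial group is trivially flat), and then handles an arbitrary $H\leqfg\Z$ by a two-case argument: either $H$ is trivial, in which case $\OC^\forall(H)=\{\{1\}\}=\OC^\forall(\Z\mid H)$ because $\Z$ has one-counter word problem, or $H$ has finite index in $\Z$, in which case Theorem~\ref{Thm:Equalities-for-finite-index-subgroups-Thm2.8} gives the equality directly. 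Your route instead imports the full strength of Herbst's main theorem --- the identification $\OC^\forall=\REG^\exists$ on virtually cyclic groups, recorded in Example~\ref{Examples}(4) --- and combines it with the universal $\REG^\exists$-flatness of Anisimov--Seifert to get the equality for every finitely generated $H\leq G$ in one chain, using that $H$ is again virtually cyclic. Both arguments are valid. What the paper's version buys is self-containment within the machinery it has already built (only the word-problem characterization of $\OC$ is needed from Herbst, not the harder classification of algebraic versus context-free subsets); what your version buys is brevity and the avoidance of the case split, at the price of resting the whole implication on a considerably deeper external result.
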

\begin{proof}
The equivalence of (2) and (3) is due to Herbst \cite{Herbst1991}. By Lemma~\ref{Lem:If-CAflat-then-WP-C}, we have (1) $\implies$ (2). Thus, we prove (3) $\implies$ (1), i.e.\ we prove that any virtually cyclic group is $\OC^\forall$-flat. Since the trivial group is clearly $\OC^\forall$-flat, it suffices in view of Proposition~\ref{Prop:cc-flatness-preserved-by-finite-index} to prove that $\Z$ is $\OC^\forall$-flat. Thus, let $H \leqfg \Z$. If $H$ is trivial, then $\OC^\forall(H) = \{ 1 \} =  \OC^\forall(\Z \mid H)$, since $\{ 1 \} \in \OC^\forall(\Z)$. If $H$ is non-trivial, then it has finite index in $\Z$, and hence by Theorem~\ref{Thm:Equalities-for-finite-index-subgroups-Thm2.8} we have $\OC^\forall(\Z \mid H) = \OC^\forall(H)$. Since $H$ was arbitrary, we are done. 
\end{proof}

On the other hand, the question of characterizing $\OC^\exists$-flat groups also remains an open problem. We have, at least, the following result in this line. 

\begin{proposition}\label{Prop:virtually-abelian-OC-flat}
Every virtually abelian group is $\OC^\exists$-flat. 
\end{proposition}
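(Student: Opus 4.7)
The plan is to imitate the argument for Proposition~\ref{Prop:AllAbelianAreCFE-flat}, with $\CF$ replaced throughout by $\OC$. The key observation that makes this work is that for any finitely generated virtually abelian group $G$ the three collections $\REG^\exists(G)$, $\OC^\exists(G)$ and $\CF^\exists(G)$ all coincide. Indeed, from $\REG \subseteq \OC \subseteq \CF$ we automatically get the chain of inclusions
\[
\REG^\exists(G) \subseteq \OC^\exists(G) \subseteq \CF^\exists(G),
\]
and the outer two sets are already known to be equal for virtually abelian $G$ (this is the fact $\CF^\exists(G) = \REG^\exists(G)$ invoked in the proof of Proposition~\ref{Prop:AllAbelianAreCFE-flat}, originally due to Herbst).

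First I would observe that the class of finitely generated virtually abelian groups is closed under passing to finitely generated subgroups: if $A \trianglelefteq_{\mathrm{f.i.}} G$ is a finitely generated abelian subgroup and $H \leqfg G$, then $H \cap A \leqfi H$ is a subgroup of a finitely generated abelian group, hence finitely generated abelian. So both $G$ and any $H \leqfg G$ fall inside the class where the collapse $\OC^\exists = \REG^\exists$ holds.

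Given this, for $H \leqfg G$ I would run exactly the chain
\[
\OC^\exists(G \mid H) = \OC^\exists(G) \cap 2^H = \REG^\exists(G) \cap 2^H = \REG^\exists(G \mid H) = \REG^\exists(H) = \OC^\exists(H),
\]
where the first and third equalities are the definitions \eqref{Def:existsrelative}, the second and last use the collapse $\OC^\exists = \REG^\exists$ for virtually abelian groups applied to $G$ and to $H$ respectively, and the penultimate equality is Proposition~\ref{Prop:AnisimovSeifert-rephrased-as-flatness} stating that every group is $\REG^\exists$-flat. Since $H$ was arbitrary, this establishes $\OC^\exists$-flatness of $G$.

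There is no serious obstacle here, as the argument is a direct transcription of the $\CF$-case. The only thing one needs to be careful about is invoking the input $\CF^\exists(G) = \REG^\exists(G)$ (for virtually abelian $G$) in the form necessary to sandwich $\OC^\exists(G)$ between two sets that are already equal; once that is noted, the rest is formal.
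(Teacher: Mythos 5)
Your proposal is correct and matches the paper's own argument: the paper likewise sandwiches $\OC^\exists(G)$ between $\REG^\exists(G)$ and $\CF^\exists(G)$, which coincide for virtually abelian $G$, and then reruns the chain of equalities from Proposition~\ref{Prop:AllAbelianAreCFE-flat}. Your extra remark that finitely generated subgroups of virtually abelian groups are again virtually abelian is a point the paper leaves implicit, but otherwise the two proofs are the same.
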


The proof of this proposition works identically to the proof of Proposition~\ref{Prop:AllAbelianAreCFE-flat}, since in any virtually abelian group we have $\REG^\exists(G) = \CF^\exists(G)$, it follows that also $\REG^\exists(G) = \cc^\exists(G)$ for any class $\cc$ with $\REG \subseteq \cc \subseteq \CF$. One such class is, of course, $\cc = \OC$, which yields the above result. As an aside, we remark that Greibach \cite{Greibach1969} has proved that there exists an infinite hierarchy 
\[
\REG \subsetneq \cc_1 \subsetneq \cc_2 \subsetneq \cdots \subsetneq \cc_i \subsetneq \cdots \subsetneq \CF
\]
where each $\cc_i$ is a full $\AFL$s, yielding infinitely many variations of Proposition~\ref{Prop:virtually-abelian-OC-flat}.

Combining Theorem~\ref{Thm:OC-forall-characteriztion.} and Proposition~\ref{Prop:virtually-abelian-OC-flat}, we deduce the following result, giving a source of $\OC^\exists$-flat groups.

\begin{proposition}\label{Prop:OC-forall=>OC-exists}
Every $\OC^\forall$-flat group is $\OC^\exists$-flat.
\end{proposition}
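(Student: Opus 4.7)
The plan is to chain together the two results immediately preceding the statement, namely Theorem~\ref{Thm:OC-forall-characteriztion.} and Proposition~\ref{Prop:virtually-abelian-OC-flat}. Given an $\OC^\forall$-flat group $G$, I would first apply Theorem~\ref{Thm:OC-forall-characteriztion.} to conclude that $G$ is virtually cyclic. Since a cyclic group is in particular abelian, any virtually cyclic group is automatically virtually abelian. Proposition~\ref{Prop:virtually-abelian-OC-flat} then yields that $G$ is $\OC^\exists$-flat, which is the desired conclusion.

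Since both of the ingredients are already available at this stage of the paper, the proof reduces to a trivial chain of implications
\[
\OC^\forall\text{-flat} \;\Longrightarrow\; \text{virtually cyclic} \;\Longrightarrow\; \text{virtually abelian} \;\Longrightarrow\; \OC^\exists\text{-flat},
\]
and there is no genuine obstacle to overcome. The real content of the implication is packaged inside the strong algebraic characterization supplied by Theorem~\ref{Thm:OC-forall-characteriztion.}; once that is in hand, nothing further about one-counter languages, flatness, or $\cc^\bullet$-subsets needs to be manipulated directly. I would write the proof as a short two-line argument quoting the two results, with the middle step (virtually cyclic $\Rightarrow$ virtually abelian) noted only in passing.
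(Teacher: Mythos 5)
Your proposal is correct and is exactly the paper's argument: the paper derives this proposition by combining Theorem~\ref{Thm:OC-forall-characteriztion.} (which identifies $\OC^\forall$-flat groups with virtually cyclic groups) with Proposition~\ref{Prop:virtually-abelian-OC-flat}, using precisely the observation that virtually cyclic groups are virtually abelian. Nothing further is needed.
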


In other words, every virtually cyclic group is $\OC^\exists$-flat. The converse, however, fails by Proposition~\ref{Prop:virtually-abelian-OC-flat}, since e.g.\ $\Z^2$ is $\OC^\exists$-flat but is not virtually cyclic and consequently also not $\OC^\forall$-flat.

\subsection{Recursive and context-sensitive flatness}\label{Subsec:Rec-and-CS}

We now present our first counterexamples to our Conjecture~\ref{Conj:Main-false-conjecture}, by proving that $\cc^\forall$-flatness is in general a strictly stronger property than having word problem in $\cc$. The general construction which leads to our counterexamples is the following proposition: 

\begin{proposition}\label{Prop:Counterexample-machine}
Let $\cc$ be a class of languages closed under inverse homomorphism, and let $G$ be a finitely generated group. Assume the following all hold: 
\begin{enumerate}[label=(\alph*)]
\item $\cc \subseteq \REC$, i.e.\ every language in $\cc$ is recursive. 
\item The word problem for $G$ lies in $\cc$.
\item There is a fixed finitely generated subgroup $H \leqfg G$ such that the membership problem for $H$ in $G$ is undecidable.  
\end{enumerate}
Then $G$ is not $\cc^\forall$-flat (as a group). 
\end{proposition}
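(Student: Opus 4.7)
The plan is to produce a subset of $H$ that lies in $\cc^\forall(H)$ but not in $\cc^\forall(G \mid H)$, thereby witnessing a failure of the equality $\cc^\forall(G \mid H) = \cc^\forall(H)$ for the subgroup $H$ given by hypothesis (c). The natural candidate is $H$ itself, viewed as a subset of the group $H$. This makes the strategy conceptually clean: $H$ should trivially be a $\cc^\forall$-subset of itself, but if it were also a $\cc^\forall$-subset of $G$, its preimage in $A^\ast$ would solve the (undecidable) membership problem for $H$ in $G$.

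First, I would fix finite generating sets $A$ for $G$ and $A'$ for $H$, with surjective homomorphisms $\pi_G \colon A^\ast \to G$ and $\pi_H \colon (A')^\ast \to H$. The preparatory step is to check that $H \in \cc^\forall(H)$, which is the statement that $\pi_H^{-1}(H) = (A')^\ast$ belongs to $\cc$. This is the only nontrivial technical point of the proof, since the hypotheses on $\cc$ are rather mild. To extract $(A')^\ast \in \cc$ from the closure assumption alone, I would use the trivial (erasing) homomorphism $\phi \colon (A')^\ast \to A^\ast$ sending each generator in $A'$ to the empty word. Because $\epsilon \in \WP_A^G$, we have $\phi^{-1}(\WP_A^G) = (A')^\ast$. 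Since $\WP_A^G \in \cc$ by hypothesis (b) and $\cc$ is closed under inverse homomorphism, we conclude $(A')^\ast \in \cc$, hence $H \in \cc^\forall(H)$.

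Next comes the contradiction argument. Suppose, for the sake of contradiction, that $G$ is $\cc^\forall$-flat as a group. Then $\cc^\forall(G \mid H) = \cc^\forall(H)$, so $H \in \cc^\forall(H) = \cc^\forall(G \mid H) \subseteq \cc^\forall(G)$. Unfolding the definition, $\pi_G^{-1}(H) \in \cc$, and by (a) this language is recursive, hence decidable. But $\pi_G^{-1}(H) = \{ w \in A^\ast \mid \pi_G(w) \in H \}$ is exactly the language whose decidability is the membership problem for $H$ in $G$, contradicting (c).

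The main obstacle I expect is the technical verification of $(A')^\ast \in \cc$; once that is in hand, the rest is a short unwinding of definitions. The trick above shows that this follows automatically from closure under inverse homomorphism together with the fact that $\epsilon$ belongs to any word problem, so no further hypotheses on $\cc$ (like containing $\REG$, being a trio, or being closed under union) are needed. All other classes appearing in the paper (notably $\REC$, $\CS$, $\pCF$) contain $(A')^\ast$ trivially, so the argument applies uniformly whenever the proposition is invoked later.
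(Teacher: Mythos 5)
Your proposal is correct and follows essentially the same route as the paper: both use $H$ itself as the witness subset, noting that $H \in \cc^\forall(H)$ while $\pi_G^{-1}(H) \notin \REC \supseteq \cc$ forces $H \notin \cc^\forall(G\mid H)$. The only difference is that you carefully justify the step $H \in \cc^\forall(H)$ (i.e.\ $(A')^\ast \in \cc$) via the erasing homomorphism onto $\WP_A^G$, a point the paper dismisses as trivial; this is a worthwhile clarification but not a different argument.
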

\begin{proof}
Suppose that all three specified conditions hold. Let $\pi \colon A^\ast \to G$ be an arbitrary surjection with $A$ a finite set, and fix a subgroup $H \leqfg G$ such that membership for $H$ in $G$ is undecidable. Then $\pi^{-1}(H) \not\in \REC$, for otherwise we could decide membership in $H$. Since $\cc \subseteq \REC$, also $\pi^{-1}(H) \not\in \cc$, and in particular $H \not\in \cc^\forall(G)$ which \textit{a fortiori} yields $H \not\in \cc^\forall(G \mid H)$. However, trivially $H \in \cc^\forall(H)$, and hence $\cc^\forall(G \mid H) \neq \cc^\forall(H)$. Thus $G$ is not $\cc^\forall$-flat. 
\end{proof}

By considering the special case of $\cc = \REC$, we deduce the following. 

\begin{theorem}\label{Thm:RECSubgroup-characterization}
Let $G$ be a finitely generated group. Then $G$ is $\REC^\forall$-flat (as a group) if and only if $G$ has decidable subgroup membership problem.
\end{theorem}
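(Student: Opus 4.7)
The plan is to use the defining feature of $\REC^\forall$: a subset $X \subseteq M$ lies in $\REC^\forall(M)$ exactly when its full preimage under the chosen surjection $A^\ast \to M$ is a recursive (i.e.\ decidable) language. Both directions will then reduce to standard enumeration arguments, using that having a decidable subgroup membership problem in $G$ implies, in particular, that $G$ has decidable word problem (taking $H=\{1\}$).

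For the forward direction, assume $G$ is $\REC^\forall$-flat and let $H\leqfg G$ be arbitrary with finite generating set $B$ and surjection $\pi_H\colon B^\ast\to H$. Then $\pi_H^{-1}(H)=B^\ast$ is regular, hence recursive, so $H\in\REC^\forall(H)$. By $\REC^\forall$-flatness, $H\in \REC^\forall(G\mid H)\subseteq \REC^\forall(G)$, meaning $\pi_G^{-1}(H)$ is recursive for a chosen surjection $\pi_G\colon A^\ast\to G$. This is precisely the statement that membership in $H$ is decidable, and since $H$ was arbitrary, $G$ has decidable subgroup membership problem.

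For the backward direction, assume that $G$ has decidable subgroup membership problem. Fix $H\leqfg G$ with generators $B=\{b_1,\dots,b_k\}$, surjection $\pi_H\colon B^\ast\to H$, and a fixed expression $\beta_i\in A^\ast$ of each $b_i$ in terms of a finite generating set $A$ of $G$, giving a homomorphism $\iota\colon B^\ast\to A^\ast$ with $\pi_G\circ\iota=\pi_H$. By Lemma~\ref{Lem:Elementary-passing-to-submonoid-2.5+2.6}(1), since $\REC$ is closed under inverse homomorphism, we already have $\REC^\forall(G\mid H)\subseteq \REC^\forall(H)$, so I only need the reverse inclusion. Let $X\in \REC^\forall(H)$, so $L_X:=\pi_H^{-1}(X)\subseteq B^\ast$ is recursive. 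I claim the following is a decision procedure for membership of an arbitrary input $w\in A^\ast$ in $\pi_G^{-1}(X)$: first, use the hypothesis to decide whether $\pi_G(w)\in H$, answering ``no'' if not (since $X\subseteq H$); otherwise, enumerate words $u\in B^\ast$ in shortlex order, and for each test whether $\pi_G(\iota(u))=\pi_G(w)$ using the (decidable) word problem of $G$, halting at the first $u$ for which this holds; then output ``yes'' iff $u\in L_X$, which is decidable as $L_X$ is recursive. The enumeration terminates because $\pi_G(w)\in H$ guarantees some such $u$ exists, and correctness is immediate from $\pi_H(u)=\pi_G(w)$. Hence $\pi_G^{-1}(X)$ is recursive, so $X\in\REC^\forall(G)\cap 2^H=\REC^\forall(G\mid H)$, as needed.

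The only ``subtle'' step is the enumeration that converts a generator-$A$ word representing an element of $H$ into a generator-$B$ word representing the same element; this is not really an obstacle, since it is the classical observation that decidability of the word problem of $G$ together with $H$ being finitely generated is enough to compute such a re-expression. No additional uniformity in $H$ is required, because $H$ is fixed throughout the argument for flatness.
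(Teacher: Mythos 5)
Your proposal is correct and follows essentially the same route as the paper's own proof: the forward direction uses $H\in\REC^\forall(H)$ plus flatness to conclude $\pi_G^{-1}(H)$ is recursive, and the backward direction first decides membership in $H$ and then rewrites the input word over the generators of $H$ before invoking recursiveness of $\pi_H^{-1}(X)$. The only difference is that you spell out the re-expression step (shortlex enumeration over $B^\ast$ checked against the decidable word problem of $G$, which follows from subgroup membership with $H=\{1\}$) that the paper leaves implicit; this is a welcome clarification but not a different argument.
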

\begin{proof}
Let $H \leqfg G$ be as given, and let $\pi \colon A^\ast \to G$ be surjective, with $A$ any finite generating set. First, suppose that $G$ is $\REC^\forall$-flat. Then since $H \in \REC^\forall(H)$, we have by $\REC^\forall$-flatness of $G$ that $H \in \REC^\forall(G \mid H) \subseteq \REC^\forall(G)$. Thus $\pi^{-1}(H)$ is recursive, and hence we can decide membership in $H$. Conversely, suppose that we can decide membership in $H$. Let $K \subseteq H$ be such that $K \in \REC^\forall(H)$. We must show that $K \in \REC^\forall(G \mid H)$, i.e.\ that $K$ is a recursive subset of $G$. But this is easy: $K$ is a recursive subset of $G$ if and only if we can decide membership in $K$ inside $G$. To decide if a given word $w \in A^\ast$ represents an element of $K$, we first simply check whether $w$ represents an element of $H$, which is decidable by assumption. If it does not represent such an element, then it does not represent an element of $K$. If it does represent an element of $H$, then since $H$ is finitely generated we can also find a representative $w'$ for $w$ in terms of the given finite generating set of $H$. Since $K \in \REC^\forall(H)$, we can now decide if $w'$ represents an element $K$, which is equivalent to $w$ representing an element of $K$. 
\end{proof}

The obvious analogue of Proposition~\ref{Prop:Counterexample-machine} for membership in finitely generated submonoids and flatness as a monoid also hold, with an identical proof \textit{mutatis mutandis}. This yields the following analogue of Theorem~\ref{Thm:RECSubmonoid-generalization}.

\begin{theorem}\label{Thm:RECSubmonoid-generalization}
Let $M$ be a finitely generated monoid. Then $M$ is $\REC^\forall$-flat (as a monoid) if and only if $G$ has decidable submonoid membership problem.
\end{theorem}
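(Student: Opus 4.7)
The plan is to mirror the proof of Theorem~\ref{Thm:RECSubgroup-characterization} almost verbatim, with finitely generated submonoids in place of finitely generated subgroups. Fix a finite monoid generating set $A$ for $M$ with associated surjection $\pi_M \colon A^\ast \to M$, and let $N \leqfg M$ be an arbitrary finitely generated submonoid, with finite generating set $B$ and associated surjection $\pi_N \colon B^\ast \to N$.

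For the direction $(\Rightarrow)$, suppose $M$ is $\REC^\forall$-flat as a monoid. Trivially $N \in \REC^\forall(N)$, and by $\REC^\forall$-flatness we obtain $N \in \REC^\forall(M \mid N) \subseteq \REC^\forall(M)$. Thus $\pi_M^{-1}(N) \in \REC$, which is precisely the statement that membership of elements of $M$ in $N$ is decidable. Since $N$ was arbitrary, $M$ has decidable submonoid membership problem.

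For the direction $(\Leftarrow)$, suppose $M$ has decidable submonoid membership problem, and let $K \in \REC^\forall(N)$ with $K \subseteq N$. By Lemma~\ref{Lem:Elementary-passing-to-submonoid-2.5+2.6}(1), we already have $\REC^\forall(M \mid N) \subseteq \REC^\forall(N)$, so it suffices to prove the reverse inclusion by showing $\pi_M^{-1}(K)$ is recursive in $A^\ast$. I would describe the decision procedure explicitly: on input $w \in A^\ast$, first decide whether $\pi_M(w) \in N$, which is decidable by hypothesis; if not, reject. Otherwise, enumerate $B^\ast$ in length-lex order and, for each candidate $w' \in B^\ast$, test whether $\pi_M(w') = \pi_M(w)$, halting at the first match. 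Since $K \in \REC^\forall(N)$, the set $\pi_N^{-1}(K) \subseteq B^\ast$ is recursive, so finally accept iff $w' \in \pi_N^{-1}(K)$.

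The only genuine subtlety — and what might be called the main point to check — is that step two of this procedure terminates and is effective. Termination holds because $\pi_M(w) \in N$ guarantees some $w' \in B^\ast$ with $\pi_N(w') = \pi_M(w)$. For effectivity I need the word problem of $M$ to be decidable, so that equality $\pi_M(w') = \pi_M(w)$ can be tested; but this is automatic, since decidability of the submonoid membership problem specializes (by taking the trivial submonoid $\{1\}$, which is finitely generated) to decidability of the word problem of $M$. I would also briefly note that this result, when combined with Theorem~\ref{Thm:RECSubgroup-characterization} and the example of Dong \cite{Dong2024} recalled in Remark~\ref{Rem:submonoid-flatness-vs-subgroup-flatness}, confirms that $\REC^\forall$-flatness as a monoid is strictly stronger than $\REC^\forall$-flatness as a group.
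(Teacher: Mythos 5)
Your overall route is the paper's own: the paper obtains this theorem by transferring the proof of Theorem~\ref{Thm:RECSubgroup-characterization} to submonoids \emph{mutatis mutandis}, and your forward direction (flatness gives $N \in \REC^\forall(M)$, hence $\pi_M^{-1}(N)$ recursive) is exactly that argument and is correct. You also correctly isolate the one step where ``mutatis mutandis'' is not automatic, namely the effectivity of producing a word $w' \in B^\ast$ with $\pi_N(w') = \pi_M(w)$.

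However, your resolution of that step has a genuine gap. Deciding membership in the trivial submonoid $\{1\}$ only decides, for a single word $w$, whether $\pi_M(w) = 1$; it does not decide, for a pair of words $u, v \in A^\ast$, whether $\pi_M(u) = \pi_M(v)$. In a group these two problems are interreducible, since $\pi(u) = \pi(v)$ if and only if $\pi(uv^{-1}) = 1$, which is why the corresponding step in the proof of Theorem~\ref{Thm:RECSubgroup-characterization} goes through; in a monoid there is no such reduction (cf.\ Remark~\ref{Rem:Monoids-WP}, where it is noted that the language $\WP_A^M$ carries essentially no algebraic information for general monoids). What your enumeration actually requires is decidability, or at the very least recursive enumerability, of the two-sided equality relation $\pi_M(u) = \pi_M(v)$ on $A^\ast$, and it is not clear that this follows from decidability of the submonoid membership problem alone. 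To close the gap you would need either to prove that decidable submonoid membership implies a decidable (semigroup-theoretic) word problem for $M$, or to find a route to showing that $\pi_M^{-1}(K)$ is recursive which avoids translating $w$ into a $B$-word altogether. Your closing observation relating this statement to Dong's example via Remark~\ref{Rem:submonoid-flatness-vs-subgroup-flatness} is correct and consistent with the paper.
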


Equipped with these results, and in particular with Theorem~\ref{Thm:RECSubgroup-characterization}, we are now ready to provide our first counterexamples to Conjecture~\ref{Conj:Main-false-conjecture}, and thereby show that having word problem in $\cc$ and being $\cc^\forall$-flat are in general distinct properties.

\begin{corollary}\label{Cor:REC-is-a-counterexample}
The following three classes are counterexamples to Conjecture~\ref{Conj:Main-false-conjecture}:
\begin{enumerate}
\item $\pCF$, the class of poly-context-free languages;
\item $\CS$, the class of context-sensitive languages;
\item $\REC$, the class of recursive languages.
\end{enumerate}
Indeed, for each of these classes $\cc$, the direct product of two non-abelian free groups $F_2 \times F_2$ has word problem in $\cc$, but it is not $\cc^\forall$-flat.
\end{corollary}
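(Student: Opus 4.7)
The plan is to apply Proposition~\ref{Prop:Counterexample-machine} uniformly to $G = F_2 \times F_2$ for each of the three classes $\cc \in \{\pCF, \CS, \REC\}$. To invoke the proposition, I first need to confirm that each class is closed under inverse homomorphism (so that $\cc^\forall$-flatness is well-defined independently of the generating set), and then verify the three hypotheses (a), (b), (c) of the proposition in each case. Closure under inverse homomorphism is standard for $\CS$ and $\REC$; for $\pCF$, it follows from the fact that $\CF$ is closed under inverse homomorphism and that inverse homomorphism commutes with finite intersections, so any finite intersection of context-free languages pulls back to such an intersection.

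Next I verify hypothesis (a), namely $\cc \subseteq \REC$. This is trivial for $\REC$, standard for $\CS$ (linear-bounded automata halt in linear space, hence decide membership), and for $\pCF$ it follows since every context-free language is recursive and $\REC$ is closed under finite intersection. For hypothesis (b), that the word problem of $F_2 \times F_2$ lies in $\cc$, it suffices to prove this for the smallest of the three classes, $\pCF$, since we have the chain $\pCF \subseteq \CS \subseteq \REC$. Taking a generating set $A = A_1 \sqcup A_2$ where $A_i$ generates the $i$-th free factor, the word problem consists of those $w \in A^\ast$ whose projection $\pi_i(w) \in A_i^\ast$ represents the identity in the corresponding copy of $F_2$; each of the two conditions defines a language of the form $\pi_i^{-1}(\WP_{A_i}^{F_2})$, which is context-free by closure of $\CF$ under inverse homomorphism. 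Their intersection is the word problem, which is therefore a $2$-context-free language, and in particular lies in $\pCF$.

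For hypothesis (c), I invoke the classical theorem of Mikhailova: there exists a finitely generated subgroup $H \leqfg F_2 \times F_2$ (of the form $\{(u,v) : uN = vN\}$ for a suitably chosen finitely presented quotient $F_2/N$ with unsolvable word problem) whose membership problem in $F_2 \times F_2$ is undecidable. With all three hypotheses verified, Proposition~\ref{Prop:Counterexample-machine} immediately yields that $F_2 \times F_2$ fails to be $\cc^\forall$-flat as a group, for each of the three classes. Combined with (b), this produces the desired counterexamples to Conjecture~\ref{Conj:Main-false-conjecture}.

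There is no genuine obstacle here; the proof is essentially an assembly of standard ingredients (Mikhailova's theorem, the poly-context-free structure of the word problem of a direct product of free groups, and the containment $\pCF \subseteq \CS \subseteq \REC$) into the framework of Proposition~\ref{Prop:Counterexample-machine}. The only point requiring any care is ensuring that the word problem really lies in $\pCF$, for which the key observation is that projection onto a subset of the generators is a monoid homomorphism $A^\ast \to A_i^\ast$, so the pre-image of each factor's word problem is context-free.
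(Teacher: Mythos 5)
Your proposal is correct and follows essentially the same route as the paper: invoke Mikhailova's theorem for the undecidable subgroup membership, note that the word problem of $F_2 \times F_2$ lies in $\pCF \subset \CS \subset \REC$, and feed these into Proposition~\ref{Prop:Counterexample-machine}. The only difference is that you prove the $\pCF$ membership of the word problem directly via the projection homomorphisms (a correct and self-contained argument), where the paper simply cites Brough.
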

\begin{proof}
It is a classical result due to Mikhailova \cite{Mikhailova1958} that the direct product $F_2 \times F_2$ contains a fixed finitely generated subgroup in which membership is undecidable. It is also known by Brough \cite{Brough2014} that $F_2 \times F_2$ has word problem in $\pCF$, and furthermore we have $\pCF \subset \CS \subset \REC$. Combining these results with Proposition~\ref{Prop:Counterexample-machine} now yields the corollary. 
\end{proof}

This gives some insight into why the class of $\CF^\forall$-groups is classifiable (as in Theorem~\ref{Thm:REG-forall-CF-forall-characterized}): indeed, the structure of groups with context-free word problem is sufficiently restricted by the Muller--Schupp Theorem to show that all subgroups of such groups are themselves context-free, which eliminates any possible application of Proposition~\ref{Prop:Counterexample-machine}. In this line, we are also led to the following natural question. 

\begin{question}\label{Question:Characterize-ETOL-A-flat}
Is every group with word problem in $\ETOL$ also $\ETOL^\forall$-flat?
\end{question}

It remains an open problem whether the class of groups with word problem in $\CF$ is strictly contained in the class of groups with word problem in $\ETOL$, or whether the two classes coincide. Indeed, it is even an open problem whether $\Z^2$ has word problem in $\ETOL$. Nevertheless, it is conceivable that sufficiently strong structural properties can be deduced about subgroups of groups with word problem in $\ETOL$ to give an affirmative answer to Question~\ref{Question:Characterize-ETOL-A-flat}. For example, it can be shown that the free product of two groups with $\ETOL$ word problem again has $\ETOL$ word problem \cite{NybergBrodda2023}. 

\subsection{Recursively enumerable flatness}\label{Subsec:Recursively-enumerable-flatness}

We now turn to considering $\cc = \RE$, the class of recursively enumerable languages. In this case, we can give a full characterization of $\RE^\forall$-flatness. 

\begin{theorem}\label{Thm:Characterizing-REA-groups-Higman}
Let $G$ be a finitely generated group. The following are equivalent: 
\begin{enumerate}
\item $G$ is $\RE^\forall$-flat.
\item $G$ has word problem in $\RE$.
\item $G$ is recursively presented.
\end{enumerate}
\end{theorem}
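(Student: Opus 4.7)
The plan is to establish the three-way equivalence via the cycle (1) $\Rightarrow$ (2) $\Rightarrow$ (3) $\Rightarrow$ (1), though in practice (2) $\Leftrightarrow$ (3) is a classical result and (1) $\Rightarrow$ (2) is immediate, so essentially only one implication requires real work.

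For the equivalence (2) $\Leftrightarrow$ (3), this is the standard fact that a finitely generated group is recursively presented if and only if its word problem is recursively enumerable. One direction is obtained by enumerating all products of conjugates of defining relators, and the other by taking the enumerated word problem itself as the set of defining relations; I would simply cite this classical result rather than reproduce the argument. The implication (1) $\Rightarrow$ (2) is an immediate application of Lemma~\ref{Lem:If-CAflat-then-WP-C}, since $\RE$ is closed under inverse homomorphism.

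The heart of the theorem is therefore the implication (2) $\Rightarrow$ (1). Suppose $\WP_A^G \in \RE$ for a fixed finite monoid generating set $A$ of $G$, with surjection $\pi_G \colon A^\ast \to G$. Let $H \leqfg G$, fix a finite monoid generating set $B$ of $H$, and specify each $b \in B$ as a word in $A^\ast$. Extend this specification to a monoid homomorphism $\iota \colon B^\ast \to A^\ast$, so that $\pi_G \circ \iota = \pi_H$, where $\pi_H \colon B^\ast \to H$ is the associated surjection. By Lemma~\ref{Lem:Elementary-passing-to-submonoid-2.5+2.6}(1), we already have $\RE^\forall(G \mid H) \subseteq \RE^\forall(H)$. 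For the reverse inclusion, let $X \in \RE^\forall(H)$, so that $L := \pi_H^{-1}(X) \subseteq B^\ast$ is recursively enumerable. The key observation is that a word $w \in A^\ast$ lies in $\pi_G^{-1}(X)$ if and only if there exists $u \in L$ such that $w \cdot \iota(u)^{-1} \in \WP_A^G$, where $\iota(u)^{-1}$ denotes the formal inverse word in $A^\ast$ (using that $A$ is taken as a monoid generating set containing formal inverses). Dovetailing the r.e.\ enumeration of $L$ against the r.e.\ enumeration of $\WP_A^G$ thus yields a semi-decision procedure for $\pi_G^{-1}(X)$, showing that this pre-image is r.e., and hence $X \in \RE^\forall(G)$, so $X \in \RE^\forall(G \mid H)$ as required.

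I do not expect any real obstacle here: the argument is essentially bookkeeping around a standard dovetailing of two recursively enumerable processes. The only point requiring care is the translation between the generating sets $A$ and $B$ via $\iota$, but this is exactly the content of Lemma~\ref{Lem:Herbst-lemma-2.3} and presents no genuine difficulty. In contrast to the case $\cc = \REC$ treated in Theorem~\ref{Thm:RECSubgroup-characterization}, here the asymmetry between semi-decidability and decidability is precisely what rescues Conjecture~\ref{Conj:Main-false-conjecture} for the class $\RE$.
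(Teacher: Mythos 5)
Your proposal is correct and follows essentially the same route as the paper: (1)$\Rightarrow$(2) via Lemma~\ref{Lem:If-CAflat-then-WP-C}, (2)$\Leftrightarrow$(3) as a classical fact, and (2)$\Rightarrow$(1) by dovetailing an enumeration of $\pi_H^{-1}(X)$ against an enumeration of $\WP(G)$ to recognize $\pi_G^{-1}(X)$. The paper phrases this as an enumeration procedure rather than a semi-decision procedure, but the two are equivalent and the key observation (a word of $G$ represents an element of $X$ iff it differs from some translated $H$-word of $L$ by an element of the word problem) is identical.
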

\begin{proof}
The equivalence of (2) and (3) is obvious; The fact that (1) implies (2) is simply an application of Lemma~\ref {Lem:If-CAflat-then-WP-C}. We will now prove that (2) implies (1). Suppose that $G$ has word problem in $\RE$ and let $H\leqfg G$. To prove that  $G$ is $\RE^\forall$-flat, we only have to show that $\RE^\forall(H)\subseteq \RE^\forall(G\mid H)$. Let $X\in \RE^\forall(H)$. To prove that $X\in \RE^\forall(G)$ is to say that  the set of words over the generators of $G$ which represent elements of $X$ can be recursively enumerated. We can do it as follows:
start enumerating elements of $X$ as words over $H$, which is possible as $X\in \RE^\forall(H)$, and every time we produce an $H$-word representing an element of $X$, we start enumerating in parallel words over the generators of $G$ equal to that word, which is possible since $\WP(G)\in \RE$.
This procedure will eventually enumerate all words over the elements of $G$ equal to some element of $X$. Hence, $X\in \RE^\forall(G)$.
\end{proof}

Recall Higman's embedding theorem, proved in \cite{Higman1961}, which states that a finitely generated group is recursively presented if and only if it is a subgroup of a finitely presented group. This gives an additional point of view of Theorem~\ref{Thm:Characterizing-REA-groups-Higman}, showing that the $\RE^\forall$-flat are precisely the subgroups of finitely presented groups. 

We conclude from Theorem~\ref{Thm:Characterizing-REA-groups-Higman} that the class $\cc = \RE$ is not a counterexample to Conjecture~\ref{Conj:Main-false-conjecture}. On the other hand, as in the case of $\CF^\exists$-flat groups, the question of algebraically characterizing $\RE^\exists$-flat groups seems difficult. We have the following general result as a first step.

\begin{proposition}\label{Prop:REA-is-REE-flat}
Every $\RE^\forall$-flat group is $\RE^\exists$-flat.
\end{proposition}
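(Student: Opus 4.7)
The plan is to show directly that for every finitely generated subgroup $H \leqfg G$, one has the inclusion $\RE^\exists(G \mid H) \subseteq \RE^\exists(H)$; the reverse inclusion is automatic by Lemma~\ref{Lem:Elementary-passing-to-submonoid-2.5+2.6}(2), and together these yield $\RE^\exists$-flatness. By Theorem~\ref{Thm:Characterizing-REA-groups-Higman}, $\RE^\forall$-flatness of $G$ implies $\WP_A^G \in \RE$ for any (hence every) finite generating set $A$ of $G$, so the whole argument will rest on this r.e.\ word problem together with the recursive enumerability of the given $\cc^\exists$-description.

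Fix finite generating sets $A$ for $G$ and $B$ for $H$ with surjective homomorphisms $\pi_G \colon A^\ast \to G$ and $\pi_H \colon B^\ast \to H$, taking $A$ closed under formal inverses so that $\WP_A^G$ makes sense as a subset of $A^\ast$. For each $b \in B$ choose a word of $A^\ast$ representing $b$ in $G$, yielding a homomorphism $\iota \colon B^\ast \to A^\ast$ with $\pi_G \circ \iota = \pi_H$. Given $X \in \RE^\exists(G \mid H)$, let $L \in \RE$ with $L \subseteq A^\ast$ and $\pi_G(L) = X$. I would then define
\[
L' = \{ w' \in B^\ast : \exists w \in L, \ w \cdot \iota(w')^{-1} \in \WP_A^G \}.
\]
Since $L$ and $\WP_A^G$ are r.e.\ and $\iota$ is computable, the set of pairs $(w, w') \in L \times B^\ast$ satisfying $w \cdot \iota(w')^{-1} \in \WP_A^G$ is r.e., and $L'$, being the projection onto the second coordinate, is r.e.

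The verification $\pi_H(L') = X$ is then immediate. If $w' \in L'$ with witness $w \in L$, then $\pi_G(w) = \pi_G(\iota(w')) = \pi_H(w')$, hence $\pi_H(w') \in \pi_G(L) = X$. Conversely, given $x \in X$ pick $w \in L$ with $\pi_G(w) = x$; since $x \in H$ there exists $w' \in B^\ast$ with $\pi_H(w') = x$, and then $\pi_G(w) = x = \pi_G(\iota(w'))$, so $w \cdot \iota(w')^{-1} \in \WP_A^G$ and $w' \in L'$. Thus $X = \pi_H(L') \in \RE^\exists(H)$, as required. I do not expect a genuine obstacle here: the argument is essentially the $\cc^\exists$-analogue of the proof of Theorem~\ref{Thm:Characterizing-REA-groups-Higman}, using the r.e.\ word problem of $G$ as the engine that converts $A$-words known to represent elements of $H$ into $B$-words representing the same elements.
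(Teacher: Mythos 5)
Your proposal is correct and follows essentially the same route as the paper's proof: both reduce to showing $\RE^\exists(G\mid H)\subseteq\RE^\exists(H)$ and use the recursively enumerable word problem of $G$ (guaranteed by $\RE^\forall$-flatness) to convert $A^\ast$-words in $L$ into equivalent $B^\ast$-words. Your version merely formalizes the paper's informal parallel-enumeration argument by exhibiting the language $L'$ explicitly as a projection of an r.e.\ set of pairs, which is a welcome precision but not a different idea.
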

\begin{proof}
Let $G$ be a  $\RE^\forall$-flat group, $H\leqfg G$ and $L\in \RE^\exists(G\mid H)$. In order to prove that $G$ is  $\RE^\exists$-flat, it suffices to prove that $X\in \RE^\exists(H)$, by Lemma \ref{Lem:Elementary-passing-to-submonoid-2.5+2.6}(2). We will present an algorithm to recursively enumerate a language of words over the generators of $H$ representing all elements in $X$. Since $X\in \RE^\exists(G)$, we run the algorithm for enumerating the elements of $X$ as words over the generators of $G$, and in parallel enumerate all $G$-words equivalent to those words until we find such a word over the generators of $H$, which must certainly exists as $L\subseteq H$.
\end{proof}

The following result, using Tarski monsters, shows that the class of $\RE^\forall$-flat groups is in general strictly contained inside the class of $\RE^\exists$-flat groups.

\begin{proposition}\label{Prop:Tarski-monsters}
There exist $\RE^\exists$-flat groups which are not $\RE^\forall$-flat.
\end{proposition}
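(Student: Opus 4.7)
The plan is to invoke Ol'shanskii's Tarski monsters and exploit their extreme subgroup lattice. Recall that for a sufficiently large prime $p$, Ol'shanskii constructed $2^{\aleph_0}$ pairwise non-isomorphic finitely generated (in fact, $2$-generated) infinite simple groups in which every proper nontrivial subgroup is cyclic of order $p$. Since there are only countably many recursively presented groups, among these uncountably many Tarski monsters we may fix one, call it $G$, that is not recursively presented. By Theorem~\ref{Thm:Characterizing-REA-groups-Higman}, such a $G$ fails to be $\RE^\forall$-flat.

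It remains to show that $G$ is $\RE^\exists$-flat. Let $H \leqfg G$ be arbitrary. By the defining property of a Tarski monster, either $H = G$, in which case $\RE^\exists(G \mid H) = \RE^\exists(G) = \RE^\exists(H)$ holds tautologically, or $H$ is a proper subgroup, hence is either trivial or cyclic of order $p$; in particular, $H$ is finite. The key observation is that whenever $H$ is finite, the flatness condition becomes an equality of power sets: indeed, since $\RE$ contains every singleton language and is closed under finite union, every finite subset of any finitely generated group is an $\RE^\exists$-subset. Thus $\RE^\exists(G \mid H) = 2^H = \RE^\exists(H)$, and $G$ is $\RE^\exists$-flat.

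The only deep ingredient is the cardinality argument producing a Tarski monster that is not recursively presented; everything else is bookkeeping, because the subgroup lattice of $G$ is so restrictive that the universe of finitely generated subgroups for which flatness must be checked collapses to finite subgroups plus the trivial case $H = G$. I do not foresee any technical obstacle beyond citing Ol'shanskii's construction and the countability of recursively presented groups.
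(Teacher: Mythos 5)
Your proposal is correct and follows essentially the same route as the paper: Ol'shanskii's continuum of Tarski monsters, the observation that every proper finitely generated subgroup is finite so that $\RE^\exists$-flatness reduces to the trivial fact that finite subsets are $\RE^\exists$-subsets, and the countability of recursively presented groups combined with Theorem~\ref{Thm:Characterizing-REA-groups-Higman}. The only cosmetic difference is that you fix a single non-recursively-presented monster up front, whereas the paper runs the cardinality comparison at the end (continuum many $\RE^\exists$-flat groups versus countably many $\RE^\forall$-flat ones); the content is identical.
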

\begin{proof}
Ol'shanskii \cite{Olshanskii1979} (proof given in \cite[Theorem~28.7]{Olshanskii1989}) proved that for every sufficiently large prime number $p$, there are continuum many finitely generated and pairwise non-isomorphic groups of exponent $p$ all of whose proper subgroups have order $p$. Let $G$ be any such group. Then clearly $G$ is $\RE^\exists$-flat: if $H \leqfg G$, then either $H = G$ in which case $\RE^\exists(G \mid H) = \RE^\exists(H)$ by definition; or else $H \neq G$ in which case $H$ is finite. Suppose $K \in \RE^\exists(G \mid H)$. Then $K \subseteq H$ is a finite set, and hence there is a finite (and thus recursively enumerable) set of words in any generating set of $H$ representing $K$. Hence $K \in \RE^\exists(H)$. By Lemma~\ref{Lem:Elementary-passing-to-submonoid-2.5+2.6}(2), it follows that $\cc^\exists(G \mid H) = \cc^\exists(H)$. Since $H$ was arbitrary, $G$ is $\RE^\exists$-flat. In particular, there are continuum many $\RE^\exists$-flat groups. However, there are only countably many $\RE^\forall$-flat groups by Theorem~\ref{Thm:Characterizing-REA-groups-Higman}, since there are only countably many recursively presented groups (indeed, there are only countably many Turing machines). This completes the proof. 
\end{proof}

\begin{remark}
We remark that, proceeding in a similar way, we can prove that the groups used in the proof of Proposition \ref{Prop:Tarski-monsters} are in fact $\cc^\exists$-flat for any trio $\cc$. Moreover, since not all such groups have decidable word problem, this proves the existence of $\CF^\exists$-flat groups with undecidable word problem (cf.\ Question \ref{question: characterize cfflat}).
\end{remark} 

This gives rise to the following natural question.

\begin{question}\label{Question: characterize reflat}
Is there an algebraic characterization of the class of $\RE^\exists$-flat groups?
\end{question}

Indeed, the examples given in the proof of Proposition~\ref{Prop:Tarski-monsters} show that there exist $\RE^\exists$-flat groups which do not have word problem in $\RE$. It would be interesting to know any further connections between the classes of epi-$\RE$ groups and $\RE^\exists$-flat groups; note that for recursively presented groups, in particular finitely presented groups, the condition of epi-$\RE$ vacuously implies $\RE^\forall$-flatness, but the converse is not true by Theorem~\ref{Thm:Characterizing-REA-groups-Higman}.

\section{Open problems and questions}\label{Sec:5openproblems}

\noindent In this section, we collect some open problems and directions for further research. First, in \S\ref{Subsec:FinalConjugacyNote} we discuss some tentative connections between $\cc^\bullet$-subsets of a group and conjugacy classes in groups. Finally, in \S\ref{Subsec:open-problems} we collect the questions we have posed throughout the article in one convenient location, and add some further to our list.

\subsection{Connections with conjugacy classes}\label{Subsec:FinalConjugacyNote}

Let $\cc$ be a class of languages closed under inverse homomorphism. For convenience, we will denote the word problem \eqref{Eq:WP-definition} of $G$ with respect to some arbitrarily chosen finite generating set by $\WP(G)$, and as discussed in \S\ref{Sec:1Prelim} this has a definite meaning since $\cc$ is closed under inverse homomorphism. Analogously, we say that a group has conjugacy problem in $\cc$ if all conjugacy classes of elements of $G$ belong to $\cc^\forall(G)$. Since the identity is only conjugate to itself, the set $\{1\}$ is itself a conjugacy class. Hence, it is clear that $\CP(G)\in \cc$ is a stronger condition than $\WP(G)\in \cc$. On the other hand, since there are groups with decidable word problem but undecidable conjugacy problem (first proved by Fridman \cite{Fridman1960}) it follows that at least for the class $\cc = \REC$ of recursive languages, the condition $\CP(G) \in \cc$ is strictly stronger than $\WP(G) \in \cc$. On the other hand, it clearly follows from Theorem~\ref{Thm:AnisimovMSHerbst} that $\WP(G) \in \REG$ if and only if $\CP(G) \in \REG$ (if and only if $G$ is finite), and Levine \cite{Levine2023} has proved that the same equivalence is also true for the class $\CF$ of context-free languages.

In this small section, we will prove that the conditions also coincide for the class of one-counter languages $\cc = \OC$. We note that this is also one of the few classes in which Conjecture~\ref{Conj:Main-false-conjecture} holds (and, as proved in \S\ref{Subsec:Rec-and-CS}, the conjecture fails in general). In this line, we thus state the following proposition, which we will combine with Theorem~\ref{Thm:OC-forall-characteriztion.}. 

\begin{proposition}\label{Prop:OC-ConjugacyProblem}
Let $G$ be a finitely generated group. The following are equivalent: 
\begin{enumerate}
\item $G$ has a one-counter word problem.
\item $G$ has a one-counter conjugacy problem.
\item $G$ is virtually cyclic. 
\end{enumerate}
\end{proposition}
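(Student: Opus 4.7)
The equivalence $(1)\iff(3)$ is exactly Herbst's theorem, already stated as Theorem~\ref{Thm:AnisimovMSHerbst}(3); and $(2)\implies(1)$ is immediate, since $\{1\}$ is itself a conjugacy class (the identity being conjugate only to itself), so the hypothesis $\CP(G)\in \OC$ forces $\{1\}\in \OC^\forall(G)$, which is precisely $\WP(G)\in \OC$. So the only real task is to establish $(3)\implies(2)$.

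For $(3)\implies(2)$, my plan is to invoke the identity $\OC^\forall(G)=\REG^\exists(G)$ available for virtually cyclic $G$ by Example~\ref{Examples}(4), thereby reducing the problem to showing that every conjugacy class of a virtually cyclic group is a \emph{rational} subset. When $G$ is finite this is vacuous, so I would assume $G$ is infinite virtually cyclic, fix an infinite cyclic normal subgroup $C=\langle t\rangle\trianglelefteq_{\mathrm{f.i.}} G$ (passing to the core of an infinite cyclic finite-index subgroup if needed) together with coset representatives $h_1,\dots,h_n$ for $C$ in $G$. Using that subgroups of a virtually cyclic group are either finite or of finite index, I would first dispatch the infinite-order case: if $g\in G$ has infinite order, then $\langle g\rangle\leq C_G(g)$ has finite index, so $[g]$ is finite and hence rational.

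The remaining case is $g$ of finite order, which I would handle by writing $[g]=\bigcup_{i=1}^n h_i\Psi h_i^{-1}$ with $\Psi=\{t^k g t^{-k}:k\in \Z\}$. Rational subsets being closed under fixed-element conjugation and finite union, it suffices to prove $\Psi$ is rational. Writing $g=h_j t^m$ along its unique coset, and using $\Aut(\Z)=\{\pm 1\}$ to extract a sign $\epsilon\in\{\pm 1\}$ with $h_j^{-1} t h_j = t^{\epsilon}$, a routine calculation collapses $t^k g t^{-k}$ to the form $h_j t^{m+(\epsilon-1)k}$. This exhibits $\Psi$ as either the singleton $\{g\}$ (when $\epsilon=1$) or the coset-like set $g\langle t^2\rangle$ (when $\epsilon=-1$), both of which are manifestly rational.

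The main obstacle is really just the coset bookkeeping in the last step; aside from that, everything is powered by the key identity $\OC^\forall(G)=\REG^\exists(G)$ from Example~\ref{Examples}(4), combined with the elementary closure properties of rational subsets and the structure theorem for subgroups of virtually cyclic groups.
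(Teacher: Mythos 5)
Your proof is correct, and its computational core is the same as the paper's: decompose the conjugacy class over a normal infinite cyclic finite-index subgroup $C=\langle t\rangle$, and split on whether conjugation by the relevant coset representative acts on $C$ as the identity or as inversion, obtaining either a finite set or a finite union of translates of the index-two subgroup $\langle t^2\rangle$ (the paper's $2H$). Where you genuinely diverge is in how membership in $\OC^\forall(G)$ is certified: you reduce everything to \emph{rationality} and then invoke Herbst's identity $\OC^\forall(G)=\REG^\exists(G)$ for virtually cyclic $G$ (Example~\ref{Examples}(4)), whereas the paper stays on the universal side throughout, checking that each piece $K_j$ is a coset of a finite-index subgroup and hence lies in $\REG^\forall(G)\subseteq\OC^\forall(G)$, with the finite pieces handled via $\{1\}\in\OC^\forall(G)$, Lemma~\ref{Lem:HerbstLemma4.1}, and closure under union. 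Your route outsources more to Herbst's classification theorem but makes the closure bookkeeping trivial, since every set in sight is manifestly rational; the paper's route is more self-contained, needing only $\REG\subseteq\OC$ and elementary closure properties. One small remark: your separate dispatch of infinite-order elements via centralizers is correct but redundant, since the computation $t^kgt^{-k}=h_jt^{m+(\epsilon-1)k}$ is valid for every $g$ and already shows $[g]$ is finite when $\epsilon=1$ and a finite union of $\langle t^2\rangle$-cosets when $\epsilon=-1$.
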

\begin{proof}
The equivalence of (1) and (3), due to Herbst \cite{Herbst1991}, appears in Theorem~\ref{Thm:OC-forall-characteriztion.}, and as discussed above (2) implies (1). Hence, it suffices to prove that (3) implies (2). Suppose that $G$ is virtually cyclic. If $G$ is finite, then, obviously, $\CP(G)\in \OC$, so we assume that $G$ is virtually $\Z$. Let $H$ be a finite index infinite cyclic subgroup of $G$. We may assume that $H$ is normal as every finite index subgroup contains a normal subgroup and every finite index subgroup of $\Z$ is infinite cyclic. Let $b_1,\ldots, b_n$ be a right transversal for $H$ in $G$.
For each $1 \leq i \leq n$, let $\varphi_i\in \Aut(H)$ be defined as $x\mapsto b_ixb_i^{-1}$. It is easy to see that $\varphi_i$ is an automorphism for all $1 \leq i \leq n$, and so it must be the identity or the inverse automorphism.

Let $g\in G$ and $h\in H$, and let $1 \leq i \leq n$ be such that $g=hb_i$. For $h'\in H$ and $1 \leq j \leq n$, we have that
\[
(hb_i)^{h'b_j}=b_j^{-1}h'^{-1} hb_ih'b_j=(h'^{-1}h(h'\varphi_i))\varphi_j^{-1}b_j^{-1}b_ib_j.
%=(h^{-1}f(h\varphi_i))\varphi_jz_jb_{k_j}.
\]
If $\varphi_i$ is the identity automorphism, then $h'^{-1}h(h'\varphi_i)=h'^{-1}hh'=h,$ and so 
\[
(hb_i)^{h'b_j}=(h\varphi_j^{-1})b_j^{-1}b_ib_j.
\]
This implies that the conjugacy class of $g$ is finite, and in particular belongs to $\OC^\forall (G)$. On the other hand, if $\varphi_i$ is the inverse automorphism, then we have that $h'^{-1}h(h'\varphi_i)=h'^{-2}h$, so 
\[
(hb_i)^{h'b_j}=(h'^{-2}h)\varphi_j^{-1}b_j^{-1}b_ib_j
\] 
and letting 
\[
K_j=\{(h'^{-2}h)\varphi_j^{-1}b_j^{-1}b_ib_j\mid h'\in H\},
\]
we have that the conjugacy class of $g$ is $\bigcup_{j\in [n]} K_j$. Let now $1 \leq j \leq n$. We will show that $K_j\in \OC^\forall(G)$. Let $2H$ denote the subgroup of squares of elements in $H$. If $\varphi_j$ is the identity, then $K_j=2H(hb_j^{-1}b_ib_j)$, which is the coset of a finite index subgroup of $H$ and consequently also of a finite index subgroup of $G$. In this case, $K_j\in \REG^\forall(G)\subseteq \OC^\forall(G).$ If $\varphi_j$ is instead the inverse automorphism, then we have $K_j=2H(h^{-1}b_j^{-1}b_ib_j)$ and the same argument shows that $K_j\in \REG^\forall(G)$. But $\REG^\forall(G) \subseteq \OC^\forall(G)$, and so we are done. 
\end{proof}

Combining the above with Theorem~\ref{Thm:OC-forall-characteriztion.}, we obtain the following. 

\begin{corollary}\label{Cor:CP-is-OC-flatness}
Let $G$ be a finitely generated group. Then $G$ has one-counter conjugacy problem if and only if $G$ is $\OC^\forall$-flat. 
\end{corollary}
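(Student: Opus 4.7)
The plan is simply to chain together the two characterizations already at our disposal: Theorem~\ref{Thm:OC-forall-characteriztion.} tells us that $\OC^\forall$-flatness is equivalent to virtual cyclicity, while Proposition~\ref{Prop:OC-ConjugacyProblem} tells us that having one-counter conjugacy problem is also equivalent to virtual cyclicity. Both biconditionals have been established, so the corollary follows by transitivity.

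In more detail, first I would handle the direction $(\Rightarrow)$. Suppose $G$ has one-counter conjugacy problem, meaning every conjugacy class of $G$ lies in $\OC^\forall(G)$. Since $\{1\}$ is a conjugacy class, this forces $\{1\} \in \OC^\forall(G)$, which by the definition of $\OC^\forall$-subsets in Example~\ref{Examples}(5) is precisely the statement that $\WP(G) \in \OC$. By Theorem~\ref{Thm:OC-forall-characteriztion.}, this implies that $G$ is virtually cyclic, and in turn (by the same theorem) that $G$ is $\OC^\forall$-flat.

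For the converse direction $(\Leftarrow)$, I would begin by assuming $G$ is $\OC^\forall$-flat. By Theorem~\ref{Thm:OC-forall-characteriztion.}, $G$ must be virtually cyclic, and then Proposition~\ref{Prop:OC-ConjugacyProblem} immediately yields that $G$ has one-counter conjugacy problem.

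There is no genuine obstacle here: the content has already been carried out in Theorem~\ref{Thm:OC-forall-characteriztion.} and Proposition~\ref{Prop:OC-ConjugacyProblem}. The only care needed is to note that the implication ``$\CP(G)\in \OC \Rightarrow \WP(G)\in\OC$'' used in the forward direction is, as remarked in \S\ref{Subsec:FinalConjugacyNote}, a consequence of the fact that $\{1\}$ is itself a conjugacy class, and hence no further argument is required.
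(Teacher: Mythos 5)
Your proposal is correct and matches the paper's argument exactly: the corollary is obtained by combining Proposition~\ref{Prop:OC-ConjugacyProblem} and Theorem~\ref{Thm:OC-forall-characteriztion.}, both of which characterize their respective properties via virtual cyclicity. The observation that $\CP(G)\in\OC$ implies $\WP(G)\in\OC$ because $\{1\}$ is a conjugacy class is likewise how the paper handles that direction.
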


It would be interesting to understand for which other classes of languages we can find a connection between the conjugacy problem and $\cc^\forall$-flatness. In particular, the class $\OC$ is sufficiently restricted to make discussing the word problem and the conjugacy problem the same question; in this line, we ask the following question, which may shed some light in separating these problems and $\cc^\forall$-flatness.

\begin{question}\label{question: wp-cp-flat}
Is there a class of languages $\cc$ such that there exists a group $G$ with:
\begin{enumerate}
\item $\WP(G) \in \cc$,
\item $\CP(G) \notin \cc$, and
\item $G$ is $\cc^\forall$-flat. 
\end{enumerate}
\end{question}

An affirmative answer to this question would a new way to separate groups by their language-theoretic properties, not covered by the traditional hierarchy of their word problem. In general, a better understanding of connections between the conjugacy problem and language-theoretic properties of subsets of groups seems like a promising line of research.

\subsection{List of open problems}\label{Subsec:open-problems}

We first collect, for convenience, the open problems and questions that we have stated throughout this article:

\newtheorem*{question1*}{Question \ref{Quest:can-square-collapse}}
\newtheorem*{question2*}{Question  \ref{question: characterize cfflat}}
\newtheorem*{question3*}{Question \ref{Quest:Is-ccE-flatness-closed-wrt-finite-extensions}}
\newtheorem*{question4*}{Question  \ref{Question:Characterize-ETOL-A-flat}}
\newtheorem*{question5*}{Question \ref{Question: characterize reflat}}
\newtheorem*{question6*}{Question \ref{question: wp-cp-flat}}

\begin{question1*}
Do there exist classes of languages $\cc_1, \cc_2$ such that $\cc_1 \subsetneq \cc_2$ but $\cc_1^\exists(G) = \cc_2^\forall(G)$ for all finitely generated groups (or monoids) $G$? \end{question1*}

\begin{question2*}
Is there an algebraic characterization of $\CF^\exists$-flat groups? In particular, is the word problem decidable in every $\CF^\exists$-flat group? 
\end{question2*}

%\begin{question3*}
%If $\cc$ is a full semi-$\AFL$, is $\cc^\exists$-flatness preserved by taking finite extensions of groups? In particular, is the class of $\CF^\exists$-flat groups closed under taking finite extensions? 
%\end{question3*}

\begin{question4*}
Is every group with word problem in $\ETOL$ also $\ETOL^\forall$-flat?
\end{question4*}

\begin{question5*}
Is there an algebraic characterization of the class of $\RE^\exists$-flat groups?
\end{question5*}

\begin{question6*}
Is there a class of languages $\cc$ such that there exists a group $G$ with:
\begin{enumerate}
\item $\WP(G) \in \cc$,
\item $\CP(G) \notin \cc$, and
\item $G$ is $\cc^\forall$-flat. 
\end{enumerate}
\end{question6*}

Having stated the questions that have appeared throughout the article, we now proceed by stating several further questions that appear naturally in the broader context of this article. First, recall from \S\ref{Sec:4flatgroupsforspecific} that we have proved for $\cc\in\{\REG,\OC,\CF,\RE\}$ that $\cc^\forall$-flatness implies $\cc^\exists$-flatness. Unlike Conjecture~\ref{Conj:Main-false-conjecture}, we do not have any counterexamples to such a statement holding in general, and thus we wonder whether it is always the case:

\begin{question}
If $\cc$ is a (reasonable) class of languages, does $\cc^\forall$-flatness imply $\cc^\exists$-flatness? Is this true for $\cc\in\{\CS,\REC\}$?
\end{question}

Note here that a \textit{reasonable} class of languages may be interpreted as one which poses no immediate obstructions to the question; it is possible that even a full semi-$\AFL$
% containing all singleton languages
 may suffice for \textit{reasonable}, for example. 

We would also like to understand the precise relationship between $\cc^\bullet_1$-flatness and $\cc^\bullet_2$-flatness when $\cc_1, \cc_2$ are two classes of languages such that one is included in the other. Thus far, none of our investigations allows us to give a negative answer to any of the following questions:

\begin{question}
Let $\cc_1,\cc_2$ be (reasonable) classes of languages, such that $\cc_1\subseteq \cc_2$. 
\begin{enumerate}
\item Does  $\cc_1^\forall$-flatness imply $\cc_2^\forall$-flatness?
\item Does $\cc_2^\exists$-flatness imply $\cc_1^\exists$-flatness?
\end{enumerate}
Can we answer any of the two questions if $\cc_1\in\{\REG, \CF\}$?
\end{question}

Finally, recall that Ho \cite{ho2018} has proved that the word problem of the free abelian group $\Z^m$ is a multiple context-free language ($\MCF$), following a previous result by Salvati \cite{salvati2015}. In general, not much is known about the class of groups having a multiple context-free word problem. Since the class of multiple context-free languages is a full semi-AFL \cite{sekietal}, the following question is then natural.

\begin{question}
Is $\Z^m$ a $\MCF^\forall$-flat group? 
\end{question}

Thus far, we have no indication whether the above question ought to have a positive or negative answer. Nevertheless, it is a very concrete question, and any answer to it would likely be fruitful in attempting to understand both flatness and multiple context-free groups more generally.

\bibliographystyle{amsalpha}
\bibliography{LinguisticSubsets-v2-03-2025.bib}

 \end{document}